%%% efatopi.tex
%%Edited: January 19th 2011
\documentclass{amsart}
\usepackage{amssymb}
\usepackage[all]{xy}
\newcommand\datver[1]{\def\datverp%
 {\par\boxed{\boxed{\text{#1; Run: \today}}}}}

\newcommand\boxb[1]{\square_b}

\numberwithin{equation}{section}

\newcommand\paperbody%
        {}

%%K-theory

\newcommand\Ko{\operatorname{K^1}}
\newcommand\Ke{\operatorname{K^0}}
\newcommand\Kco{\operatorname{K_c^1}}
\newcommand\Kce{\operatorname{K_c^0}}

%%Cohomology
\newcommand\sH{\operatorname{H^*}}

\newcommand\ev{\operatorname{ev}}
\newcommand\tev{\operatorname{\widetilde{ev}}}
\newcommand\Ev{\operatorname{Ev}}
\newcommand\tEv{\operatorname{\widetilde{Ev}}}
%Used for product evaluation map
%Product-type

\newcommand\ps{\operatorname{ps}}

\newtheorem{lemma}{Lemma}
\newtheorem{proposition}{Proposition}
\newtheorem{corollary}{Corollary}
\newtheorem{theorem}{Theorem}
\newtheorem{non-theorem}{Non-Theorem}
\newtheorem{conjecture}{Conjecture}

\theoremstyle{remark}
\newtheorem{definition}{Definition}
\newtheorem{remark}{Remark}

\newcommand\ad{\operatorname{ad}}

%%cusp notation

\newcommand\coF{{}^{\mathcal{C}}\kern-2pt\Lambda}

\newcommand\cFTs{{}^{\Phi}\overline{T}\kern-1pt{}^*}

\newcommand\sus{\operatorname{sus}}
\newcommand\psus{\operatorname{ps}}

\newcommand\even{\text{even}}
\newcommand\odd{\text{odd}}

	%The classical isotropic calculus on a symplectic vector space

	%The paired-partially-isotropic calculus on a pair of vector spaces

	%The multi-isotropic calculus on a collection of Poisson vector spaces

	%The group of invertible classical isotropic operators

	%The pointed groupsof invertible classical isotropic operators

	%The pointed space of classical isotropic operators

	%The groups of invertible paired classical isotropic operators

	%The groups of invertible multi isotropic operators

	%The ring of elliptic classical isotropic operators

	%The algebra of classical isotropic symbols

	%The groups of elliptic classical isotropic symbols

	%The groups of elliptic classical isotropic symbols

	%The suspended isotropic calculus on a symplectic vector space

	%The group of invertible suspended isotropic operators.

	%The classical isotropic differential operators.

\newcommand\BC{\operatorname{BC}}

\newcommand\Tr{\operatorname{Tr}}

\newcommand\STr{\operatorname{STr}}

\newcommand\Ch{\operatorname{Ch}}

\hyphenation{para-met-rix}

	%Radial compactification 

	%Parabolic compactification

	%Radial compactification 

	%One-point compactification 

\newcommand\SA{\operatorname{SA}}
\newcommand\cA{\mathcal{A}}

\newcommand\cB{\mathcal{B}}

\newcommand\cD{\mathcal{D}}

\newcommand\cF{\mathcal{F}}
\newcommand\cG{\mathcal{G}}

\newcommand\cL{\mathcal{L}}

\newcommand\bbE{\mathbb E}

\newcommand\bbB{\mathbb B}
\newcommand\bbC{\mathbb C}

\newcommand\bbN{\mathbb N}

\newcommand\bbR{\mathbb R}

\newcommand\bbZ{\mathbb Z}

\newcommand\cS{\mathcal S}
\newcommand\cSp{{\mathcal S}'}

\newcommand\CI{{\mathcal{C}}^{\infty}}

\newcommand\CmI{{\mathcal{C}}^{-\infty}}

\newcommand\Diff[1]{\operatorname{Diff}^{#1}}

	%b-pseudodifferential operators

	%b-pseudodifferential operators

	%quadratic scattering pseudodifferential operators

	%quadratic scattering pseudodifferential operators

\newcommand\cFNs{{}^{\Phi}\overline N\kern-1pt{}^*}

\newcommand\ind{\operatorname{ind}}

\newcommand\Hom{\operatorname{Hom}}
\newcommand\Id{\operatorname{Id}}

\newcommand\SU{\operatorname{SU}}

\newcommand\dCI{\dot{\mathcal{C}}^{\infty}}

\newcommand\ha{\frac{1}{2}}

\newcommand\sign{\operatorname{sign}}

\newcommand\pa{\partial}

\newcommand\dR{\operatorname{dR}}

%%%%%%%%%%%%%%%%%%%%%%%%%%%%%%%%%%%%%%%%%%%%%%%%%%%%%%%%%%
%Commands added February 6th 2009
%%%%%%%%%%%%%%%%%%%%%%%%%%%%%%%%%%%%%%%%%%%%%%%%%%%%%%%%%%
\newcommand\pr{\operatorname{pr}}
\newcommand\U{\operatorname{U}}
\newcommand\cli{\bbC\ell}
\newcommand\End{\operatorname{End}}

%%%%%%%%%%%%%%%%%%%%%%%%%%%%%%%%%%%%%%%%%%%%%%%%%%%%%%%%%%
%Commands added August 6th 2008
%%%%%%%%%%%%%%%%%%%%%%%%%%%%%%%%%%%%%%%%%%%%%%%%%%%%%%%%%%
\newcommand\tcA{\widetilde{\cA}}

\newcommand\cU{\mathcal{U}}

\newcommand\Mand{\text{ and }}

\newcommand\Mforsome{\text{ for some }}

\newcommand\Mon{\text{ on }}
\newcommand\Mor{\text{ or }}
\newcommand\Mover{\text{ over }}
\newcommand\Msatisfies{\text{ satisfies }}

\newcommand\Mwhere{\text{ where }}

\newcommand\cf{cf\@. }
\datver{1.0E; Revised: 1-5-2009}
\begin{document}
\title[Eta forms and families index]
{Eta forms and the odd pseudodifferential families index}

%lasteqno efatopi@1841
\author{Richard Melrose}
\address{Department of Mathematics, Massachusetts Institute of Technology}
\email{rbm@math.mit.edu}
\author{Fr\'ed\'eric Rochon}
\address{Department of Mathematics, Australian National University}
\email{frederic.rochon@anu.edu.au}

\thanks{The research of the first author was partially supported by the
  National Science Foundation under grant DMS-0408993; the second author
  was supported by a NSERC discovery grant.}
%\dedicatory{\sethouse\datverp}
%
\begin{abstract}
Let $A(t)$ be an elliptic, product-type suspended (which is to say
parameter-dependant in a symbolic way) family of pseudodifferential
operators on the fibres of a fibration $\phi$ with base $Y.$ The standard
example is $A+it$ where $A$ is a family, in the usual sense, of first
order, self-adjoint and elliptic pseudodifferential operators and
$t\in\bbR$ is the `suspending' parameter. Let
$\pi_{\cA}:\cA(\phi)\longrightarrow Y$ be the infinite-dimensional bundle
with fibre at $y\in Y$ consisting of the Schwartz-smoothing perturbations,
$q,$ making $A_y(t)+q(t)$ invertible for all $t\in\bbR.$ The total eta
form, $\eta_{\cA},$ as described here, is an even form on $\cA(\phi)$ which
has basic differential which is an explicit representative of the odd Chern
character of the index of the family:
\begin{equation}
d\eta_{\cA}=\pi_{\cA}^*\gamma _A,\ \Ch(\ind(A))=[\gamma_{A}]\in H^{\odd}(Y).
\tag{*}\label{efatoi.5}\end{equation}
The $1$-form part of this identity may be interpreted in terms of the
$\tau$ invariant (exponentiated eta invariant) as the determinant of the
family. The $2$-form part of the eta form may be interpreted as a
B-field on the K-theory gerbe for the family $A$ with \eqref{efatoi.5}
giving the `curving' as the $3$-form part of the Chern character of the
index. We also give `universal' versions of these constructions over a
classifying space for odd K-theory.  For Dirac-type operators, we relate $\eta_{\cA}$ with the Bismut-Cheeger eta form.  
\end{abstract}

\maketitle
%\tableofcontents

\section*{Introduction}

Eta forms, starting with the eta invariant itself, appear as the boundary
terms in the index formula for Dirac operators \cite{MR53:1655a},
\cite{MR93k:58211}, \cite{MR91e:58181}, \cite{MR99a:58144},
\cite{MR99a:58145}. One aim of the present paper is to show that, with the
freedom gained by working in the more general context of families of
pseudodifferential operators, these forms appear as universal
transgression, or connection, forms for the cohomology class of the
index. That these forms arise in the treatment of boundary problems
corresponds to the fact that boundary conditions amount to the explicit
inversion of a suspended (or model) problem on the boundary. The odd index
of the boundary family is trivial and the eta form is an explicit
trivialization of it in cohomology. To keep the discussion within bounds we
work here primarily in the `odd' setting of a family of self-adjoint
elliptic pseudodifferential operators, taken to be of order $1,$ on the
fibres of a fibration of compact manifolds
\begin{equation}
\xymatrix{Z\ar@{-}[r]&M\ar[d]^{\phi}\\ &Y,}\quad\quad\begin{gathered}\\
\\
A\in\Psi^{1}(M/Y;E),\ A^*=A,\ \text{elliptic}\end{gathered}
\label{efatopi.1}\end{equation}
where a smooth, positive, fibre density on $M$ and a Hermitian inner product
on the bundle have been chosen to define the adjoint. A similar discussion
is possible in the more usual `even' case.

From the fibration and pseudodifferential family an infinite-dimensional
bundle of principal spaces, $\cA(\phi),$ of invertible perturbations on each
fibre, can be constructed:
\begin{equation}
\xymatrix{G^{-\infty}_{\sus}(\phi;E)\ar@{-}[r]\ar[dr]_{q_{\cA}}
&\cA(\phi)\ar[d]_{p_{\cA}}\ar[r]&
\Psi^{1,1}_{\psus}(\phi;E)\\
&
Y.\ar[ur]_{[A+it]}}
\label{efatopi.2}\end{equation}
The vertical map here does not correspond to a principal bundle in the
conventional sense since it has a non-constant bundle of structure groups,
$G^{-\infty}_{\sus}(\phi;E),$ with fibre consisting of the invertible
suspended smoothing perturbations of the identity on the corresponding
fibre of $\phi.$ The individual groups in this bundle are flat pointed loop
groups and hence are classifying for even K-theory. Despite the twisting by
fibre diffeomorphisms, the homotopy group
$\Pi_0(G^{-\infty}_{\sus}(M/Y;E)),$ where $G^{-\infty}_{\sus}(M/Y;E)$ is
the space of global smooth sections of $G^{-\infty}_{\sus}(\phi;E),$ is
canonically identified with $\Ke(Y)$, see for instance \cite{bpffco}. In this sense
$G^{-\infty}_{\sus}(\phi;E)$ is a `classifying bundle' for the K-theory of
$Y.$

Throughout this paper we use notation such as $\cB(\phi)$ for the total
space of a bundle over $Y$ associated with a given fibration
\eqref{efatopi.1} and $\cB(M/Y)$ for the corresponding space of global
sections of the bundle. Thus, on the right in \eqref{efatopi.2},
$\Psi^{1,1}_{\psus}(\phi;E)$ is the space of product-type suspended
pseudodifferential operators on the fibres of $\phi$ (and acting on
sections of the bundle $E)$ -- an element of $\Psi^{1,1}_{\psus}(\phi,E)$
is thus a family of pseudodifferential operators acting 
on smooth sections of $E$
on the fibre above a point $y\in Y$ where the parameter in the family is in
$\bbR$ (the suspension variable $t)$ with `product symbolic' dependence
on this parameter (as indicated by the suffix $\psus).$ In the diagram
above, $A+it$ is such a family, although we consider a more general
situation in the body of the paper. 

On the total space of the structure bundle in \eqref{efatopi.2} there is a
deRham form, $\Ch_{\even},$ representing the even Chern character,
i\@.e\@.~which pulls back under any section to a representative of the
Chern character of the K-class defined by that section. The eta form,
$\eta_{\cA},$ in this setting is a form on $\cA(\phi),$ defined by
regularization of the formula for the Chern character on the structure
bundle (see \eqref{gef.4} and \eqref{efatopi.47}). Under the action of a
section of the structure bundle, this eta form shifts by the pull back of
the Chern character up to an exact term
\begin{equation}
\begin{gathered}
\alpha :G^{-\infty}_{\sus}(M/Y;E)\times_Y\cA(\phi)\longrightarrow\cA(\phi),\ \alpha
^*\eta_{\cA}=\pr_{2}^*\eta_{\cA}+\pr_{1}^*\Ch_{\even}+d\gamma,\\
\gamma\text{ a smooth form on }G^{-\infty}_{\sus}(M/Y;E)\times_Y\cA(\phi),\\
\Mwhere \pr_{1}:G^{-\infty}_{\sus}(M/Y;E)\times_Y\cA(\phi)\longrightarrow 
G^{-\infty}_{\sus}(M/Y;E)\Mand\\
\pr_{2}:G^{-\infty}_{\sus}(M/Y;E)\times_Y\cA(\phi)\longrightarrow\cA(\phi)
\end{gathered}
\label{efatopi.3}\end{equation}
are the natural projections.  The central result below is:

\begin{theorem}\label{efatopi.4} The eta form, $\eta_{\cA},$ on $\cA(\phi)$
  has basic differential representing the odd Chern character of the index
  bundle of the given family $A$ in \eqref{efatopi.1} 
\begin{equation}
d\eta_{\cA}=p_{\cA}^*\gamma_{A} ,\ \gamma_A\in\CI(Y;\Lambda^{\odd}),\
d\gamma_{A}=0,\ \Ch_{\odd}(\ind(A))=[\gamma_{A}]\in H^{\odd}(Y).
\label{efatopi.5}\end{equation}
\end{theorem}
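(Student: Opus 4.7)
The plan is to prove the theorem in two stages: first show that $d\eta_{\cA}$ is basic for the bundle $p_{\cA}:\cA(\phi)\to Y$, so that it descends to a closed form $\gamma_A$ on $Y$; then identify the cohomology class $[\gamma_A]$ with $\Ch_{\odd}(\ind(A))$.

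For the first stage, apply $d$ to both sides of the transgression formula \eqref{efatopi.3}. Using $d\Ch_{\even}=0$ and $d^2=0$ we obtain $\alpha^*d\eta_{\cA}=\pr_2^*d\eta_{\cA}$, which says that $d\eta_{\cA}$ is invariant under the fibrewise action of the bundle of groups $G^{-\infty}_{\sus}(\phi;E)$ on $\cA(\phi)$. Since this action is free and transitive on fibres, it remains to check that $d\eta_{\cA}$ is horizontal. For this I would use Cartan's formula $i_V d\eta_{\cA}=\cL_V\eta_{\cA}-d\,i_V\eta_{\cA}$ with $V$ the fundamental vector field attached to an element of the Lie algebra of $G^{-\infty}_{\sus}$. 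Linearizing \eqref{efatopi.3} at the identity section expresses $\cL_V\eta_{\cA}$ in terms of the interior product of $V$ with $\Ch_{\even}$ at the identity plus an exact term coming from $\gamma$, and this combines with $d\,i_V\eta_{\cA}$ to give zero upon inserting the explicit structure of $\eta_{\cA}$ recorded in \eqref{gef.4} and \eqref{efatopi.47}.

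For the second stage, I would identify the descended closed form $\gamma_A$ with a representative of $\Ch_{\odd}(\ind(A))$. Since $\eta_{\cA}$ is by construction a regularized fibre integral over the suspension variable $t\in\bbR$ of a Chern character representative built from a natural connection on the universal family, $d\eta_{\cA}$ has the shape of a Chern--Weil form pushed forward to $Y$. This is parallel to Bismut's superconnection approach: the odd Chern character of the K-theoretic index is computed as exactly such a regularized fibre trace. I would verify the identification either by a direct local computation in a neighbourhood of each $y\in Y$ over which the suspended family can be suitably trivialized, or, more cleanly, by a universal argument pulling back to a classifying space for odd K-theory and comparing with the universal odd Chern character. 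The latter is natural here because $G^{-\infty}_{\sus}(\phi;E)$ is itself classifying for even K-theory and the suspension shifts parity in a way compatible with the index map $[A]\in\Ko(Y)$.

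The hardest step will be controlling the regularization used in the second stage: the integral in $t$ entering the definition of $\eta_{\cA}$ is divergent and must be renormalized in the spirit of the classical eta invariant analysis, but now in the full form-valued setting with product-type symbolic dependence in $t$, so that boundary contributions at $t=\pm\infty$ must be controlled and shown to produce the expected Chern--Weil representative of $\Ch_{\odd}(\ind(A))$. The horizontality step of the first stage is also delicate, as it requires direct manipulation of the defining formula of $\eta_{\cA}$, and it is at this point that the product-type symbolic structure of the class $\Psi^{1,1}_{\psus}(\phi;E)$ genuinely enters the argument.
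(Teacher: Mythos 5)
Your two-stage outline (basic-ness first, then identification with the index) matches the architecture of the theorem, but neither stage closes as written. For stage 1, differentiating \eqref{efatopi.3} does give invariance of $d\eta_{\cA}$ under the fibrewise action, but your horizontality step fails: linearizing \eqref{efatopi.3} at the identity along a Lie-algebra direction $v$, the term $\pr_1^*\Ch_{\even}$ is pulled back through a single point of the group, so all its positive-degree components die and you are left with $\cL_V\eta_{\cA}=d(\dot\gamma_v)$ for some form $\dot\gamma_v$ you do not control. Cartan's formula then gives $i_Vd\eta_{\cA}=d\bigl(\dot\gamma_v-i_V\eta_{\cA}\bigr)$, which is exact but not visibly zero; without an explicit handle on $\gamma$ the cancellation you assert cannot be checked, and nothing in \eqref{gef.4} or \eqref{efatopi.47} supplies it. The paper proves basic-ness by a different and genuinely effective mechanism (Proposition~\ref{gef.10}): $d\eta_{\cA}=\widetilde\Tr(\lambda^{t})$ by the trace-defect identity for the regularized trace $\Tr_{\sus}$, and the formal trace $\widetilde\Tr$ vanishes on Schwartz-smoothing perturbations, so the result depends only on the asymptotic expansion of the family as $\tau\to\pm\infty$, i.e.\ only on $A$ itself \mhy{} hence is a pull-back from $Y.$

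Stage 2 is where the theorem actually lives, and your proposal replaces the argument with a statement of intent ("verify by a direct local computation or by a universal argument"). A local computation over small open sets of $Y$ cannot detect the index class, which is a global obstruction, and the appeal to the superconnection formalism imports a different calculus that would itself need a comparison theorem. The missing idea is the delooping sequence \eqref{efatopi.8}/\eqref{efatopi.163}: because the half-open loop group bundle $\tilde G^{-\infty}_{\sus}(\phi;E)$ has contractible fibres (Lemma~\ref{contractibility.1}), the enlarged bundle $\tcA(\phi)$ admits a global section $\tilde A,$ whose value at $\tau=+\infty$ is a section $\gamma$ of $G^{-\infty}(\phi;E)$ classifying \emph{minus} the index of $A.$ Restricting the multiplicativity formula \eqref{efatopi.100} to the graph $\cF$ of this section, which $\pi_{\cA}$ identifies with $\cA(\phi),$ gives $\pi_{\cA}^*\eta_{\cA}+\pi^*_{\tilde G}\tilde\eta=\pi^*\beta_{\tilde A},$ and applying $d$ together with the universal transgression identity $d\tilde\eta=R_{\infty}^*\Ch_{\odd}$ of \eqref{efatopi.22} yields \eqref{efatopi.113} and hence $[\gamma_A]=\Ch_{\odd}(\ind(A)).$ Without this (or an equivalent trivialization of the index bundle after extension of the structure group), the identification of the descended form with the odd Chern character of the index is not established.
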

Once a choice of connection is made, the form $\gamma_{\cA}$, which can be
written explicitly in terms of the formal trace of \cite{MR96h:58169} (see
\eqref{gef.14} below), gives a representative of the Chern character of the
index class. For the particular case of families of Dirac
operators associated to a pseudodifferential bundle, Paycha and Mickelsson, in
\cite{Paycha-Mickelsson}, obtained a related representative of the Chern
class using the Wodzicki residue instead of the formal trace.

To prove Theorem~\ref{efatopi.4} we use the smooth delooping sequence for
the fibration, which is the top row in the diagram
\begin{equation}
\xymatrix{G^{-\infty}_{\sus}(\phi;E)\ar[r]\ar@{-}[d]&
\tilde G^{-\infty}_{\sus}(\phi;E)\ar[r]\ar@{-}[d]&
G^{-\infty}(\phi;E)
\\
\cA(\phi)\ar[dr]\ar@{^(->}[r]&
\widetilde{\cA}(\phi)\ar[d]\ar[ur]
\\
&
Y.\ar@/_2ex/[u]_(0.63){\tilde A}\ar[uur]_{\widetilde{\ind}(A)}
}
\label{efatopi.6}\end{equation}
Here $\widetilde{\cA}(\phi)$ is an extension of $\cA(\phi)$ to a bundle of
principal spaces (in the same sense as for $\cA)$ with bundle of structure
groups, $\tilde G^{-\infty}_{\sus}(\phi;E),$ the half-open (smooth-flat)
loop group bundle. This has contractible fibres and hence
$\widetilde{\cA}(\phi)$ has a section $\widetilde{A}$ as indicated in
\eqref{efatopi.6}. Taking the quotient by the original structure group,
this projects to a section, $\widetilde{\ind}(A),$ of $G^{-\infty}(\phi;E)$
with homotopy class giving (minus) the index in $\Ko(Y)$ of the family. Ultimately,
\eqref{efatopi.5} follows from the fact that there is a corresponding
multiplicativity formula linking $\eta_{\cA}$ to an analogous form,
$\widetilde \eta.$ Thus $\widetilde\eta$ is a universal transgression form
for the delooping sequence, in that it restricts to the Chern character on 
$G^{-\infty}_{\sus}(\phi;E)$ and $d\widetilde\eta$ is basic; it is the
pull-back of the odd Chern character $\Ch_{\odd}$ on $G^{-\infty}(\phi;E).$

In \S\ref{deLoop} the smooth delooping sequence for K-theory is described.
The universal Chern forms on the odd and even classifying spaces are
constructed in \S\ref{U-Chern}; the regularization to a universal eta form
on the half-open loop group is carried out in \S\ref{U-eta}. The
constructions of Chern forms is extended to the classifying bundle given by
a fibration in \S\ref{G-Chern}. The bundle of invertible perturbations for
a self-adjoint elliptic family, or more generally an elliptic family of
product-type suspended operators, is introduced in \S\ref{Odd-ell} and in
\S\ref{EtaFam} the eta forms are generalized to this case and further
extended in \S\ref{Exteta}. The index formula, Theorem~\ref{efatopi.4}, is
proved in \S\ref{Index-form}. The realization of the exponentiated eta
invariant, the $\tau$-invariant, as a determinant is discussed in
\S\ref{Dets} and the adiabatic determinant of a doubly-suspended family is
discussed in \S\ref{DoubDet}. This is used to construct a smooth and
primitive form of the determinant line bundle over the even classifying
space in \S\ref{Det-line}. The K-theory gerbe is realized as a bundle gerbe
in the sense of Murray \cite{Murray1} in \S\ref{K-gerbe} and the geometric
version of this gerbe for an elliptic family is described in
\S\ref{Ell-gerbe}.  Finally, in \S\ref{bs.0}, we discuss the relationship between the eta forms as introduced here and the eta forms of Bismut-Cheeger \cite{Bismut-Cheeger}.

\paperbody
\section{Delooping sequence}\label{deLoop}

We first consider the `universal' case with constructions directly over
classifying spaces. Despite the infinite-dimensional base, this setting is
a little simpler than the geometric case of a fibration since there is no
twisting by diffeomorphisms. Let $Z$ be a compact manifold with $\dim Z>0$
and let $E\longrightarrow Z$ be a complex vector bundle over it. The
algebra of smoothing operators on sections of $E$ is
\begin{equation*}
\Psi^{-\infty}(Z;E)=\CI(Z^2;\Hom(E)\otimes\Omega_R)
\label{efatoi.16}\end{equation*}
where $\Omega_{R}=\pi^{*}_{R}\Omega$ is the pull-back of the density bundle by
the projection $\pi_{R}: Z\times Z\longrightarrow Z$ onto the right factor and 
$\Hom(E)= \pi_{R}^{*}E'\otimes \pi_{L}^{*}E$ with
$\pi_{L}:Z\times Z\longrightarrow Z$ the projection onto the left factor.
The  product is given by the integral 
\begin{equation}
(A\circ B)(z,z')=\int_{Z}A(z,z'')B(z'',z').
\label{efatopi.89}\end{equation}

The topological group
\begin{equation}
G^{-\infty}(Z;E)=
\big\{A\in \Psi^{-\infty}(Z;E);\ \exists\ (\Id+A)^{-1}=\Id+B,\
B\in \Psi^{-\infty}(Z;E)\big\}
\label{efatoi.17}\end{equation}
is an open dense subset and is classifying for odd K-theory. The `suspended' (or
flat-pointed loop) group 
\begin{equation}
G^{-\infty}_{\sus}(Z;E)=
\big\{A\in\cS(\bbR_\tau\times Z^2;\Hom(E)\otimes\Omega _R);
A(\tau)\in G^{-\infty}(Z;E)\big\}
\label{efatoi.18}\end{equation}
is therefore classifying for even K-theory. It is an open (and dense)
subspace of the Schwartz functions on $\bbR$ with values in
$\Psi^{-\infty}(Z;E).$

Thus, for any other manifold $X,$ the sets of equivalence classes of
(smooth) maps reducing to the identity outside a compact set under (smooth)
homotopy through such maps are the K-groups:
\begin{equation*}
\begin{gathered}
\Kco(X)=\big\{f\in\CI(X;G^{-\infty}(Z;E));f=\Id\Mon X\setminus K,\ K\Subset X
\big\}/\sim,
\\
\Kce(X)=\big\{f\in\CI(X;G^{-\infty}_{\sus}(Z;E));f=\Id\Mon X\setminus K,\
K\Subset X\big\}/\sim.
\end{gathered}
\label{efatoi.15}\end{equation*}

By definition, Schwartz functions are `flat at infinity' and we
introduce a larger space of functions which are Schwartz at $-\infty$ but
more generally `flat to a constant' at $+\infty$ and the corresponding group
\begin{multline}
\tilde G^{-\infty}_{\sus}(Z;E)=\{A\in\CI(\bbR_\tau\times
Z^2;\Hom(E)\otimes\Omega _R);\lim_{\tau\to-\infty}A(\tau)=0,\\
\frac{d A(\tau)}{d\tau}\in\cS(\bbR_\tau\times Z^2;\Hom(E)\otimes\Omega _R),\
A(\tau)\in G^{-\infty}(Z;E)\ \forall\ \tau\in[-\infty,\infty]\}.
\label{efatopi.7}\end{multline}
Thus $A$ can be recovered from its derivative,
\begin{equation}
A(\tau)=\int_{-\infty}^\tau \frac{dA(s)}{ds}ds.
\label{efatopi.31}\end{equation}
Moreover, there is a well-defined map `restriction to $\tau=\infty$', 
\begin{equation}
R_{\infty}:\tilde G^{-\infty}_{\sus}(Z;E)\longrightarrow G^{-\infty}(Z;E)
\label{efatopi.32}\end{equation}
which is surjective since $G^{-\infty}(Z;E)$ is connected and a general
curve between two points can be smoothed and flattened at the ends.

The delooping sequence in the present context is the short exact sequence
of groups
\begin{equation}
\xymatrix{
G^{-\infty}_{\sus}(Z;E)\ar[r]^{\iota}&
\tilde G^{-\infty}_{\sus}(Z;E)\ar[r]^{R_{\infty}}&
G^{-\infty}(Z;E)}
\label{efatopi.8}\end{equation}
where the map to the quotient group is explicitly given by
\eqref{efatopi.32} and the flatness of the paths at $+\infty$ ensures
exactness in the middle.

\begin{lemma}\label{contractibility.1}
The group $ \tilde G^{-\infty}_{\sus}(Z;E)$ is contractible.
\end{lemma}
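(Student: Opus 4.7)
The plan is to view $\tilde G^{-\infty}_{\sus}(Z;E)$ as a smoothly parametrized based path space for $G^{-\infty}(Z;E)$ at $\Id$: each $A \in \tilde G^{-\infty}_{\sus}$ corresponds to a smooth path $\tau \mapsto \Id + A(\tau)$ in $G^{-\infty}(Z;E)$ beginning at $\Id$ at $\tau = -\infty$, with the Schwartz condition on $dA/d\tau$ encoding flatness at both ends. The standard contraction of a based path space, namely rescaling the time parameter so each path collapses onto the constant path at the basepoint, should then contract $\tilde G^{-\infty}_{\sus}$ onto $\{0\}$, provided the Schwartz/flatness structure is preserved along the way.

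Concretely, fix a smooth increasing diffeomorphism $\psi: [-\infty, \infty] \to [0,1]$ whose derivative decays exponentially at $\pm\infty$, for instance $\psi(\tau) = \frac{1}{2}(1+\tanh\tau)$, and for $s \in (0,1]$ set $f_s(\tau) = \psi^{-1}(s\psi(\tau))$. Define
\begin{equation*}
H: [0,1] \times \tilde G^{-\infty}_{\sus}(Z;E) \longrightarrow \tilde G^{-\infty}_{\sus}(Z;E),\quad H(s,A)(\tau) = A(f_s(\tau))\ \text{for}\ s > 0,\quad H(0, A) \equiv 0.
\end{equation*}
Clearly $H(1,\cdot)=\Id$ and $H(0,\cdot)\equiv 0$, so once continuity and well-definedness are established, $H$ contracts $\tilde G^{-\infty}_{\sus}$ to the identity element of the group.

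For $s > 0$ the reparametrization $f_s$ is a smooth diffeomorphism of $\bbR$ with $f_s(-\infty) = -\infty$ and $f_s(+\infty) = \psi^{-1}(s) < \infty$, so $\Id + A(f_s(\tau))\in G^{-\infty}$ and $H(s,A)(-\infty) = 0$. The chain rule
\begin{equation*}
\frac{d H(s,A)}{d\tau}(\tau) = \frac{dA}{d\tau}\bigl(f_s(\tau)\bigr)\cdot\frac{s\,\psi'(\tau)}{\psi'(f_s(\tau))}
\end{equation*}
produces a Schwartz function of $\tau$: as $\tau \to +\infty$ the factor $\psi'(\tau)$ decays exponentially while $\psi'(f_s(\tau))$ tends to a positive constant; as $\tau \to -\infty$ one checks that $f_s(\tau) \sim \tau + \frac{1}{2}\log s$, so the chain-rule factor tends to $1$ and $dA/d\tau \circ f_s$ inherits Schwartz decay in $\tau$ from $dA/d\tau$. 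Higher derivatives are treated by the same chain-rule analysis.

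Continuity at $s = 0$ uses that $f_s(\tau) \to -\infty$ uniformly on compacta as $s \to 0^+$, together with the fact that $A$ and all its derivatives decay faster than any polynomial at $-\infty$ (an integrated consequence of $dA/d\tau \in \cS$ together with $A(-\infty) = 0$), so every Fr\'echet seminorm of $H(s,A)$ vanishes as $s \to 0^+$. The chief technical hurdle is the uniform Schwartz estimate on $\frac{d}{d\tau} H(s,A)$ as $s$ ranges over $(0,1]$: this is precisely what forces the compactifying $\psi$ to have exponentially decaying derivative at $\pm\infty$, so that the ratio $\psi'(\tau)/\psi'(f_s(\tau))$ stays well-controlled for all such $s$.
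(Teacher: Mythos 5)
Your proof is correct and follows essentially the same route as the paper: both contract the flat-pointed path space by reparametrizing the suspension variable, the only real issue being preservation of the flatness/Schwartz conditions at the two ends. The paper handles this by radially compactifying to $[0,1]$ and using a two-stage homotopy built from a cutoff $\rho$ that is constant near $x=1$, whereas you conjugate the scaling $x\mapsto sx$ through an exponentially compactifying $\psi$, whose exponentially decaying derivative is exactly what keeps $f_s'$, and hence $\frac{d}{d\tau}H(s,A)$, rapidly decreasing at $+\infty$ \mhy\ a legitimate alternative implementation of the same idea.
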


\begin{proof} It is only the `flatness at infinity' of the elements of
  $\tilde G^{-\infty}_{\sus}(Z;E)$ that distinguishes this result from the
  standard contractibility, by shortening the curve, of the pointed path
  space of a group. To maintain this condition during the contraction,
  first identify $(-\infty,\infty)$ by radial compactification with the
  interior of $[0,1].$ Since the singularities in the compactification are
  swamped by the rapid vanishing of the derivatives at the end points, this
  gives the alternative description of the group as
\begin{equation}
\begin{gathered}
\tilde G^{-\infty}_{\sus}(Z;E)=\\
\{ a\in \CI([0,1]_x;G^{-\infty}(Z;E));
\frac{da}{dx} \in \dot{\mathcal{C}}^{\infty}([0,1]; \Psi^{-\infty}(Z;E)), 
\ a(0)=0 \},
\end{gathered}
\label{contr.2}\end{equation}
where  $\dot{\mathcal{C}}^{\infty}([0,1]; \Psi^{-\infty}(Z;E))$ is the space
of smooth functions vanishing together with all their derivatives at $x=0$
and $x=1.$

Now, let $\rho:[0,1]\longrightarrow [0,1]$ be a smooth function with
$\rho(0)=0$ and $\rho(x)=1$ near $x=1$ and consider the homotopy
\begin{equation}
\psi_t(x)=
\begin{cases}
2t\rho(x), & t\in [0,\frac{1}{2}], \\
\rho(x)+ (2t-1)(x-\rho(x)), & t\in [\frac{1}{2},1],
\end{cases}
\label{contr.3}\end{equation}
between the constant map, $\psi_{0}(x),$ and the identity map $\psi_{1}(x)=x.$
Note that $\psi_t(1)=1$ for $\frac{1}{2}\le t\le 1$ and $\psi_t$ is flat at $1$
for $0\le t\le \frac{1}{2}.$ It follows that the composite $f(\psi_t(x))$ with
$f\in\CI([0,1])$ is flat at $x=1$ for all $t$ if $f$ is flat at $x=1.$ Thus
composition
\begin{equation}
\Psi_{t}:\tilde G^{-\infty}_{\sus}(Z;E)\ni a\longmapsto a\circ
\psi_{t}\in\tilde G^{-\infty}_{\sus}(Z;E)
\label{contr.4}\end{equation}
gives the desired deformation retraction to the identity element.
\end{proof}

This argument is not limited to this particular group and holds in greater
generality.

\section{Universal Chern forms}\label{U-Chern}

The group $G^{-\infty}(Z;E),$ identified as an open dense subset of
$\Psi^{-\infty}(Z;E)$, is an infinite dimensional manifold modelled on the
Fr\'echet space $\CI(Z^2;\Hom(E)\otimes\Omega _R).$ We shall fix the space
of smooth functions on $G^{-\infty}(Z;E)$ and more generally the smooth
sections of form bundles and other tensor bundles.

First, it is natural to identify the tangent space at any point with the
linear space in which the group is embedded. Then the cotangent space can
be identified with its dual, $\CmI(Z^2;\Hom(E')\otimes\Omega_L),$ the space
of distributional sections, where $\Omega_L$ is the left density bundle. Thus
\begin{equation}
T^*_aG^{-\infty}(Z;E)=\CmI(Z^2;\Hom(E')\otimes\Omega_L)
\label{efatopi.10}\end{equation}
with the duality between smooth tangent and cotangent fibres given by
distributional pairing. This can be written formally as bundle pairing
followed by integration
\begin{equation}
\begin{gathered}
T^*_aG^{-\infty}(Z;E)\times T_aG^{-\infty}(Z;E)\ni(\alpha,B)\longrightarrow
\alpha \cdot B\in\bbC,\\
\alpha \cdot B=\int_{Z^2}\alpha(z,z')B(z,z').
\end{gathered}
\label{efatopi.11}\end{equation}

Having defined the tangent and cotangent fibres at each point,
the fibres of the cotensor bundles are interpreted as completed tensor
products. Thus
\begin{equation}
(T^*)^{\otimes k}_a=\CmI(Z^{2k};\bigotimes_j\pi^*_j\Hom(E')\otimes\Omega _{kL})
\label{efatopi.13}\end{equation}
%1
where $\Omega _{kL}$ is the tensor product of the (trivial) real line
bundles on each left factor of all the pairs and the homomorphism bundle is
lifted from each pair of factors.

Since $G^{-\infty}(Z;E)$ is a metric space with the topology
induced from $\Psi^{-\infty}(Z;E),$ continuity for functions is immediately
defined.  More generally, continuity for sections of any of the tensor
bundles is defined by insisting that a $k$-cotensor field should be a
continuous map from the metric space $G^{-\infty}(Z;E)$ (or indeed any
subset of it) into the distributional space \eqref{efatopi.13} in the
strong sense that it should map locally into some fixed Sobolev, hence
Hilbert, space
$$
H^{-N}(Z^{2k};\bigotimes_j\pi^*_j\Hom(E')\otimes\Omega
_{kL})
$$
and should be continuous for the metric topologies. The meaning of
directional derivatives is then clear. For a map to be $C^1$, we insist that
all directional derivatives exist at each point, that they are jointly
defined by an element of the next higher tensor space, i.e.\ distribution
in two more variables, and that the resulting section of this tensor bundle
is also continuous. Then infinite differentiability is defined by 
iteration.

The form bundles are defined, as usual, as the totally antisymmetric parts
of the corresponding cotensor bundles. Smoothness as a form is smoothness
as a cotensor field. The deRham differential is the map from smooth
$k$-forms to smooth $(k+1)$-forms given in the usual way by differentiation
followed by antisymmetrization.

If $F:G^{-\infty}(Z;E)\longrightarrow \bbC$ is smooth and $b\in
G^{-\infty}(Z;E)$ then $L(b)^*F(a)=F(ba)$ is also smooth, as is $R(b)^*F$
defined by $R(b)^*F(a)=F(ab^{-1}).$ Thus $G^{-\infty}(Z;E)$ acts on its
space of smooth functions, as in the setting of finite dimensional Lie
groups. These actions extend to cotensor fields and hence to forms.

The universal odd Chern character is given by a slight reinterpretation of
the standard finite-dimensional formula
\begin{multline}
\Ch_{\odd}(a)=\\
\frac{1}{2\pi i}\Tr
\left(\int_0^1a^{-1}da\exp{\left(\frac{t(1-t)(a^{-1}da)^2}{2\pi i}\right)}dt\right)
\in\CI(G^{-\infty}(Z;E);\Lambda^{\odd}).
\label{efatoi.20}\end{multline}
Namely expanding out the exponential in formal power series and carrying
out the resulting integrals reduces this to an infinite sum 
\begin{equation}
\Ch_{\odd}(a)=\sum\limits_{k=0}^\infty c_k\Tr((a^{-1}da)^{2k+1}),\ c_k= \frac{1}{(2\pi i)^{k+1}}
  \frac{k!}{(2k+1)!}.
\label{efatopi.24}\end{equation}
Here, each $da$ is the identification of the tangent space at $a$ with
$\Psi^{-\infty}(Z;E)$ -- so can be thought of as the differential of the
identity. Thus, for any $2k+1$ elements $b_{j}\in\Psi^{-\infty}(Z;E),$ the
evaluation on $(T_a)^{\otimes(2k+1)}$ of an individual term is
\begin{multline}
\Tr((a^{-1}da)^{2k+1})(b_1,\dots,b_{2k+1})=\\
\sum\limits_{\sigma} (-1)^{\sign(\sigma)}
\Tr(a^{-1}b_{\sigma(1)}a^{-1}b_{\sigma(2)}\dots a^{-1}b_{\sigma (2k+1)}).
\label{efatopi.25}\end{multline}
The trace is well defined since the product is an element of
$\Psi^{-\infty}(Z;E).$ It is also defined at each point by a distribution, as
required above, and the same is true of all derivatives. Namely at
each point the distribution defining this form is just the total
antisymmetrization (of variables in pairs) of
\begin{equation}
A(z'_{2k+1},z_1)A(z'_{1},z_2)A(z'_{2},z_3)\dots A(z'_{2k},z_{2k+1})
\label{efatopi.26}\end{equation}
where $A$ is the Schwartz kernel of $a^{-1}.$ Note that while this
\emph{is} smooth in the sense described above, the kernel representing the
form at a given point is not a smooth function because of the presence of
the identity factors in the operators. Due to the identity
\begin{equation}
\frac{d}{dt} a_t^{-1}=-a^{-1}\frac{da_t}{dt} a^{-1}
\label{efatopi.27}\end{equation}
differentiation gives a similar form, but with less symmetrization, 
with respect to parameters. Thus \eqref{efatopi.25} defines a form in each
odd degree.

As a result of antisymmetrization the form corresponding to
\eqref{efatopi.25} for an even power is identically zero. Moreover the
computation of the deRham differential, based on the identities
\eqref{efatopi.27}, $d^2a=0$ and $d(a^{-1}da a^{-1})=0$ yields
\begin{equation}
d\Tr\left((a^{-1}da)^{2k+1}\right)=
-\Tr\left((a^{-1}da)^{2k+2}\right)=0\Longrightarrow
d\Ch_{\odd}=0 
\label{efatopi.28}\end{equation}
globally on $G^{-\infty}(Z;E).$ The Chern character \eqref{efatoi.20} is
universal in the sense that if $f:X\longrightarrow G^{-\infty}(Z;E)$ is any
smooth map from a compact manifold $X$, then 
\begin{equation}
      [f^{*}\Ch_{\odd}]=\Ch_{\odd}([f])\in H^{\odd}(X;\bbC)
\label{efatoi.28b}\end{equation}
represents the odd Chern character of the $K$-class defined by the homotopy
class $[f]$ of $f.$

The abelian group structure on $\Ko(X)$ is derived from the non-abelian
group structure on $G^{-\infty}(Z;E)$ and in particular the linearity of
the odd Chern character is a consequence of the following result. Here we
say that a form on a product of two (infinite-dimensional) manifolds
$M_1\times M_2$ `has no pure terms' if it vanishes when restricted to
$\{p_1\}\times M_2$ or $M_1\times\{p_2\}$ for any points $p_1\in M_1$ or
$p_2\in M_2.$ 

\begin{proposition}\label{efatopi.140} There is a smooth form
  $\delta_{\even}$ on $G^{-\infty}(Z;E)\times G^{-\infty}(Z;E)$ of even
  degree which has no pure terms, vanishes when pulled back to the `product
  diagonal' $\{(a,a^{-1})\}$ and is such that in terms of pull-back under
  the product map and two projections:
\begin{equation}
\xymatrix{
&G^{-\infty}(Z;E)\\
&G^{-\infty}(Z;E)\times G^{-\infty}(Z;E)\ar[u]^m\ar[dr]_{\pi_R}\ar[dl]^{\pi_L}\\
G^{-\infty}(Z;E)&&G^{-\infty}(Z;E)
}
\label{efatopi.141}\end{equation}
the form in \eqref{efatoi.20} satisfies
\begin{equation}
m^*\Ch_{\odd}=\pi_L^*\Ch_{\odd}+\pi_R^*\Ch_{\odd}+d\delta_{\even}.
\label{efatopi.142}\end{equation}
\end{proposition}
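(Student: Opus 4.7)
The plan is to construct $\delta_{\even}$ by a Chern-Simons transgression, beginning from the product-rule identity
\[
m^*(a^{-1}da) = b^{-1}(\pi_L^*\alpha)\,b + \pi_R^*\beta, \qquad \alpha = a^{-1}da,\ \beta = b^{-1}db.
\]
Setting $\wt\alpha = b^{-1}(\pi_L^*\alpha)b$ and using cyclicity of the trace gives $\Tr(\wt\alpha^{2k+1}) = \Tr((\pi_L^*\alpha)^{2k+1})$, so the pure-$\alpha$ and pure-$\beta$ parts of $m^*\Ch_{\odd}$ already match the corresponding parts of $\pi_L^*\Ch_{\odd}+\pi_R^*\Ch_{\odd}$; the content of \eqref{efatopi.142} is that the remaining cross terms assemble to an exact form with the two additional properties.

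First I would pass to the stabilized group $G^{-\infty}(Z;E\oplus E)$ and exploit the explicit rotation homotopy
\[
\Gamma_s(a,b) = \begin{pmatrix} a & 0 \\ 0 & 1\end{pmatrix} R_s \begin{pmatrix} 1 & 0 \\ 0 & b\end{pmatrix} R_s^{-1},\quad R_s = \begin{pmatrix} \cos(\tfrac{\pi s}{2}) & -\sin(\tfrac{\pi s}{2}) \\ \sin(\tfrac{\pi s}{2}) & \cos(\tfrac{\pi s}{2})\end{pmatrix},
\]
which runs from $\Gamma_0 = \operatorname{diag}(a,b)$ to $\Gamma_1 = \operatorname{diag}(ab,1)$. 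Since $\Ch_{\odd}$ is additive on block diagonals and vanishes on $\Id$, one has $\Ch_{\odd}(\Gamma_0) = \pi_L^*\Ch_{\odd}+\pi_R^*\Ch_{\odd}$ and $\Ch_{\odd}(\Gamma_1) = m^*\Ch_{\odd}$, so the Chern-Simons form
\[
T = \int_0^1 \sum_{k\ge 0}(2k+1)c_k\,\Tr\!\left(\Gamma_s^{-1}\partial_s\Gamma_s \cdot (\Gamma_s^{-1}d\Gamma_s)^{2k}\right) ds
\]
satisfies $dT = m^*\Ch_{\odd} - \pi_L^*\Ch_{\odd} - \pi_R^*\Ch_{\odd}$ by the classical transgression identity, yielding a preliminary form of \eqref{efatopi.142}.

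Next, to enforce the "no pure terms" condition, I would correct $T$ by subtracting its pure-bidegree components with respect to $T^*(G\times G) = \pi_L^*T^*G \oplus \pi_R^*T^*G$. The restriction $T|_{\{a_0\}\times G}$ is itself the Chern-Simons form of the $b$-family $\Gamma_s(a_0,\cdot)$; its differential $\Ch_{\odd}(a_0 b) - \Ch_{\odd}(a_0) - \Ch_{\odd}(b)$ is closed and exact on $G$, since $L_{a_0}$ is smoothly isotopic to the identity ($G^{-\infty}(Z;E)$ being connected), so $L_{a_0}^*\Ch_{\odd}-\Ch_{\odd} = d\eta_{a_0}$ for a smoothly $a_0$-dependent primitive. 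Subtracting the $\pi_R$-pullback of such a primitive removes the $(0,*)$-component of $T$ while preserving the transgression identity, and the symmetric correction removes the $(*,0)$-component. For vanishing on the product diagonal $\{(a,a^{-1})\}$, observe that both $m^*\Ch_{\odd}$ vanishes there (since $m(a,a^{-1}) = \Id$) and $\pi_L^*\Ch_{\odd} + \pi_R^*\Ch_{\odd}$ vanishes (since $\Ch_{\odd}(a^{-1}) = -\Ch_{\odd}(a)$ by cyclicity), so the corrected form restricted to the diagonal is closed; a final adjustment by the extension of its primitive, supported in a tubular neighborhood of the diagonal, achieves literal vanishing.

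The main obstacle is the coherent execution of these successive subtractions: each correction must preserve both \eqref{efatopi.142} and the properties already enforced. In particular, the primitives used to remove the pure components must be chosen naturally in the base-points (for instance by Chern-Simons integration along straight-line paths of connections), and the final adjustment for diagonal vanishing must not reintroduce pure-bidegree components. Verifying that such canonical choices exist globally on $G^{-\infty}\times G^{-\infty}$, and that the resulting $\delta_{\even}$ is smooth in the sense of \S\ref{U-Chern}, is the most delicate part; it rests on cyclicity of the trace applied to the explicit matrix form of $\Gamma_s$, together with the fact that all obstruction classes vanish simultaneously on the diagonal.
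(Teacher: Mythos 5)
Your core construction---stabilizing to $E\oplus E$, rotating $\operatorname{diag}(1,b)$ into $\operatorname{diag}(b,1)$, and taking the transgression of the closed form $\Ch_{\odd}$ along this homotopy---is exactly the paper's argument, and it does yield a $\delta_{\even}$ satisfying \eqref{efatopi.142}. The gaps are in your treatment of the two side conditions. For the ``no pure terms'' condition you have missed the key point: the transgression form produced by \emph{this particular} homotopy already has no pure terms, with no correction needed. When $a$ is frozen, $\Gamma_s^{-1}d\Gamma_s$ reduces to the Maurer--Cartan form of the conjugated family $R_s\operatorname{diag}(1,b)R_s^{-1}$, and each term of the $ds$-component of $\Gamma_s^*\Ch_{\odd}$ is a trace of a product in which the rotation generator $\dot R_sR_s^{-1}$ (purely off-diagonal) sits against diagonal matrices built from $b$; the trace therefore vanishes identically (this is \eqref{efatopi.162} in the paper). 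Your proposed repair is not only unnecessary but does not work as stated: the $(0,*)$-bidegree component of $T$ along $\{a_0\}\times G$ depends on $a_0$, so it is not the $\pi_R$-pullback of anything; its differential is $L_{a_0}^*\Ch_{\odd}-\Ch_{\odd}$, which is identically zero by left-invariance of $a^{-1}da$ (so there is no nontrivial ``primitive of the differential'' to subtract); and subtracting any form that is not globally exact on $G^{-\infty}(Z;E)\times G^{-\infty}(Z;E)$ destroys the identity \eqref{efatopi.142} you have just established.

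The diagonal-vanishing step is also not sound. You correctly observe that $\delta_{\even}$ restricted to $\{(a,a^{-1})\}$ is closed, but closedness does not give exactness on this infinite-dimensional submanifold, and even granting a primitive, the cutoff construction $d(\chi\eta)$ must simultaneously (i) be globally exact, (ii) restrict on the diagonal to the form you want to cancel, and (iii) not reintroduce pure terms---none of which you verify, and (iii) you concede may fail. The paper's resolution is cleaner and you should adopt it: the involution $I(a,b)=(b^{-1},a^{-1})$ satisfies $m\circ I=\iota\circ m$, $\pi_L\circ I=\iota\circ\pi_R$, $\pi_R\circ I=\iota\circ\pi_L$ with $\iota$ the inversion, under which $\Ch_{\odd}$ changes sign; hence all three Chern terms in \eqref{efatopi.142} are $I$-odd, so $\delta_{\even}$ may be replaced by its $I$-odd part $\tfrac12(\delta_{\even}-I^*\delta_{\even})$ without disturbing \eqref{efatopi.142} or the absence of pure terms. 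Since the product diagonal is the fixed-point set of $I$ and $I^*$ acts as the identity on forms pulled back to that fixed-point set, any $I$-odd form pulls back to zero there. With these two replacements your argument closes up.
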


\begin{proof} For any two bundles the group $G^{-\infty}(Z;E)\oplus
  G^{-\infty}(Z;F)$ can be identified as the diagonal subgroup of
  $G^{-\infty}(Z;E\oplus F)$ and the Chern form restricted to this subgroup
  clearly splits as the direct sum. So, to prove \eqref{efatopi.142} we
  work on $E\oplus E$ and take a homotopy which connects $ab$
  acting on the left factor of $E,$ so as $ab\oplus\Id$ on $E\oplus E,$ with
  $a\oplus b$ acting on $E\oplus E.$ This can be constructed in terms of a
  rotation between the two factors. Thus 
\begin{equation}
M(t)=\begin{pmatrix}\cos t&\sin t\\
-\sin t&\cos t
\end{pmatrix},\ t\in[0,\pi/2]
\label{efatopi.156}\end{equation}
is such that 
\begin{equation}
B(t)=M^{-1}(t)\begin{pmatrix}b&0\\0&\Id
\end{pmatrix}M(t)\Msatisfies B(0)=\begin{pmatrix}b&0\\0&\Id
\end{pmatrix},\ B(\pi/2)=\begin{pmatrix}\Id&0\\0&b
\end{pmatrix}.
\label{efatopi.157}\end{equation}

Using this family, consider the map 
\begin{equation}
H:[0,1]\times G^{-\infty}(Z;E)\times G^{-\infty}(Z;E)\longmapsto A(0)B(t)\in
G^{-\infty}(Z;E\oplus E).
\label{efatopi.158}\end{equation}
It follows that the form $\alpha=H^*\Ch_{\odd}$ is a closed form on the product
and hence decomposing in terms of the factor $[0,1],$  
\begin{equation}
\alpha =dt\wedge \alpha _1(t)+\alpha _2(t)
\label{efatopi.159}\end{equation}
where the $\alpha _i$ are smooth 1-parameter families of forms
on $G^{-\infty}(Z;E)\times G^{-\infty}(Z;E),$  
\begin{equation}
d\alpha_2=0,\ d\alpha _1=\frac{\pa}{\pa t}\alpha _2
\label{efatopi.160}\end{equation}
where $d$ is now the deRham differential on $G^{-\infty}(Z;E)\times
G^{-\infty}(Z;E).$ Thus, setting
\begin{equation}
\delta_{\even} =-\int_0^{\pi/2}\alpha _1(t)dt,
\label{efatopi.161}\end{equation}
\eqref{efatopi.142} follows.

Now, if $a$ is held constant, $H^*\Ch_{\odd}$ is independent of $a$ and
reduces to the Chern character for $B(t).$ It follows that the individual
terms in $\alpha _1$ are multiples of
\begin{multline}
\Tr\bigg(\begin{pmatrix}\Id&0\\0&0
\end{pmatrix}
\big(\frac{\pa M(t)}{\pa t} M^{-1}(t)((db) b^{-1})^{2k}\big)\\
-\begin{pmatrix}\Id&0\\0&0
\end{pmatrix}
\left(M^{-1}(t)\frac{\pa M(t)}{\pa t} (b^{-1}db)^{2k}\right)\bigg).
\label{efatopi.162}\end{multline}
Since $\frac{\pa M(t)}{\pa t} M^{-1}(t)$ and $M^{-1}(t)\frac{\pa M(t)}{\pa t}$
 are off-diagonal this vanishes. A similar argument applies if $b$ is held
 constant, so $\delta_{\even}$ in \eqref{efatopi.161} is without pure terms. 

Under inversion, $a\longmapsto a^{-1},$ $\Ch_{\odd}$ simply changes sign, so
under the involution $I:(a,b)\longmapsto (b^{-1},a^{-1})$ both the left
side and the two Chern terms, together, on the right change sign. Thus
$\delta=\delta _{\even}$ can be replaced by its odd part under this
involution, $\ha (\delta -I^*\delta),$ which ensures that it vanishes when
pulled back to the submanifold left invariant by $I,$ namely
$\{b=a^{-1}\}.$ It still is without pure terms so the proposition is
proved.
\end{proof}

The discussion of the suspended group $G^{-\infty}_{\sus}(Z;E)$ is
similar. Namely the tangent space is the space of Schwartz
sections $\cS(\bbR\times Z^2;\Hom(E)\otimes\Omega _R)$ which can be
identified, by radial compactification of the line, with $\dCI([-1,1]\times
Z^2;\Hom(E)\otimes\Omega _R)\subset\CI([-1,1]\times
Z^2;\Hom(E)\otimes\Omega _R),$ consisting of the space of smooth sections
on this manifold with boundary, vanishing to infinite order at both
boundaries. The dual space is then the space of Schwartz distributions
$\cSp(\bbR\times Z^2;\Hom(E')\otimes\Omega _L),$ or in the compactified
picture the space of extendible distributional sections. Apart from these
minor alterations, the discussion proceeds as before and the even Chern
forms, defined by pull-back and integration are
\begin{equation}
\Ch_{\even}= p_{*}(\ev^*\Ch_{\odd})\in
\CI(G^{-\infty}_{\sus}(Z;E);\Lambda^{\even}),
\label{efatopi.23}\end{equation}
where 
\begin{equation}
\ev:\bbR\times G^{-\infty}_{\sus}(Z;E)\ni(s,A)\longmapsto A(s)\in G^{-\infty}(Z;E)
\label{efatopi.9}\end{equation}
is the evaluation map and $p_{*}$ is the pushforward map along the fibres of the
projection $p:\bbR\times G^{-\infty}_{\sus}(Z;E)\longrightarrow
G^{-\infty}_{\sus}(Z;E)$ on the right factor. So, at least formally, 
\begin{equation}
\Ch_{\even}=\frac1{2\pi i}\int_{\bbR}\int_0^1
\Tr
\left((a^{-1}da)
\exp{\left(\frac{t(1-t)(a^{-1}da)^2}{2\pi i}\right)}\right)dt,
\label{efatopi.65}\end{equation}
where the outer integral mean integration with respect to $\tau$ of the
coefficient of $d\tau.$

The analogue of Proposition~\ref{efatopi.140} for the even Chern character
follows from that result. Namely if we consider the corresponding product
map, pointwise in the parameter, and projections: 
\begin{equation}
\xymatrix{
&G^{-\infty}_{\sus}(Z;E)\\
&G^{-\infty}_{\sus}(Z;E)\times
G^{-\infty}_{\sus}(Z;E)\ar[u]^m\ar[dr]_{\pi_R}\ar[dl]^{\pi_L}\\
G^{-\infty}_{\sus}(Z;E)&&G^{-\infty}_{\sus}(Z;E)
}
\label{efatopi.143}\end{equation}
then there is a smooth form $\delta _{\odd}$ on the product group such that
\begin{equation}
m^*\Ch_{\even}=\pi_L^*\Ch_{\even}+\pi_R^*\Ch_{\even}+d\delta _{\odd}.
\label{efatopi.144}\end{equation}
This odd form can be constructed from $\delta _{\even}$ using the pull-back
and push-forward operations for the (product) evaluation map 
\begin{multline}
\Ev:\bbR\times G^{-\infty}_{\sus}(Z;E)\times G^{-\infty}_{\sus}(Z;E)\ni(\tau,a,b)
\longrightarrow\\
(a(\tau),b(\tau))\in G^{-\infty}(Z;E)\times G^{-\infty}(Z;E)
\label{efatopi.145}\end{multline}
as 
\begin{equation}
\delta _{\odd}= -\int_{\bbR} \delta '(\tau)d\tau,\ \Ev^*\delta_{\even}=d\tau\wedge\delta
'(\tau)+\delta ''(\tau). 
\label{efatopi.146}\end{equation}
Since the forms are Schwartz in the evaluation parameter, the additional
term  
\begin{equation}
\int_{\bbR}\frac{\pa}{\pa \tau}\delta ''(\tau)=0
\label{efatopi.147}\end{equation}
and \eqref{efatopi.144} follows; note that it does not follow from the fact
that $\delta_{\even}$ has no pure terms that this is true of $\delta
_{\odd}$ -- and it is not!

Smoothness of forms in the sense discussed above certainly implies that the
pull-back of such a form to a finite dimensional manifold, under a smooth
map $Y\longrightarrow G^{-\infty}(Z;E)$ is smooth on $Y$ and closed if the
form on $G^{-\infty}(Z;E)$ is closed. Thus if $f:Y\longrightarrow
G^{-\infty}(Z;E)$ is a representative of $[f]\in \Ko(Y)$ then
$f^*\Ch_{\odd}$ is a sum of closed odd-degree forms on $Y.$ The cohomology
class is constant under homotopy of the map. Indeed, a homotopy between
$f_0$ and $f_1$ is a map $F:[0,1]_r\times Y\longrightarrow
G^{-\infty}(Z;E).$ The fact that the Chern form pulls back to be closed
shows that $F^*\Ch_{\odd}$ is of the form
\begin{equation}
\begin{gathered}
A(r)+dr\wedge B(r),\ d_YA(r)=0,\ \frac{dA(r)}{dr}=d_YB(r)\\
\Longrightarrow
A(1)-A(0)=d\int_0^1B(r)dr.
\end{gathered}
\label{efatopi.30}\end{equation}
Thus cohomology classes in the even case are also homotopy invariant and
these universal Chern forms define a map from K-theory to cohomology. This
is the Chern character. The theorem of Atiyah and Hirzebruch
shows that the combined even and odd Chern characters give a multiplicative
isomorphism
\begin{equation*}
\Ch:\big(\Ke(X)\oplus\Ko(X)\big)\otimes\bbC\longrightarrow\sH(X;\bbC).
\label{efatoi.19}\end{equation*}

\section{Universal eta form}\label{U-eta}

As a link between the odd and even universal Chern characters defined above
on the end groups in \eqref{efatopi.8}, we consider the corresponding eta
form on $\tilde G^{-\infty}_{\sus}(Z;E).$ It has the same formal definition
as the even Chern character but now lifted to the larger (and contractible)
group. This consists of paths in $G^{-\infty}(Z;E)$ so there is still an
evaluation map
\begin{equation}
\tEv:\bbR\times \tilde
G^{-\infty}_{\sus}(Z;E)\ni(s,A)\longmapsto A(s)\in G^{-\infty}(Z;E)
\label{efatopi.49}\end{equation}
just as in \eqref{efatopi.9}.

\begin{definition}\label{efatopi.19} The universal eta form on $\tilde
  G_{\sus}^{-\infty}(Z;E)$ is defined as in \eqref{efatopi.23} but interpreted
  on the group $\widetilde G^{-\infty}_{\sus}(Z;E)$ with the evaluation map
  \eqref{efatopi.49}
\begin{equation}
\tilde\eta= \tilde p_{*}\left(\tEv^*\Ch_{\odd}\right)
 \in\CI(\widetilde G^{-\infty}_{\sus}(Z;E);\Lambda^{\even})
\label{efatopi.20}\end{equation}
and with $\tilde p_{*}$ the push-forward map corresponding to integration
along the fibres of the projection $\tilde p: \bbR\times \widetilde
G^{-\infty}_{\sus}(Z;E)\longrightarrow \widetilde G^{-\infty}_{\sus}(Z;E).$
\end{definition}

Integration on the fibres here is well defined since, in the integrand --
which is the contraction with $\pa/\pa\tau$ -- necessarily one of the terms
is differentiated with respect to the suspension parameter, which has
the effect of removing the constant term at infinity. Thus the integral in
\eqref{efatopi.20} still converges rapidly.

If $X$ is a compact smooth manifold and if $a: X\longrightarrow
\widetilde{G}^{-\infty}(Z;E)$ is a smooth map, the associated eta form is
\begin{equation}
   \eta(a)= a^{*}\tilde\eta.
\label{etaform}\end{equation}

Now, consider the diagram analogous to \eqref{efatopi.143} but for the
extended group, and hence with an additional map corresponding to
restriction to $t=\infty$ in each factor:

\begin{equation}
\xymatrix{
&\tilde G^{-\infty}_{\sus}(Z;E)\\
&\tilde G^{-\infty}_{\sus}(Z;E)\times
\tilde G^{-\infty}_{\sus}(Z;E)
\ar[u]^m\ar[dr]_{\pi_R}\ar[dl]^{\pi_L}\ar[dd]^{R_{\infty}\times
R_{\infty}}\\
\tilde G^{-\infty}_{\sus}(Z;E)&
&\tilde G^{-\infty}_{\sus}(Z;E)\\
&G^{-\infty}(Z;E)\times G^{-\infty}(Z;E).
}
\label{efatopi.148}\end{equation}

\begin{proposition}\label{efatopi.21} The eta form in \eqref{efatopi.20}
restricts to $\Ch_{\even}$ on $G^{-\infty}_{\sus}(Z;E),$ satisfies the
identity  
\begin{equation}
m^*\tilde\eta=\pi_L^*\tilde\eta+\pi_R^*\tilde\eta+d(\tilde\delta _{\odd})
+(R_{\infty}\times R_{\infty})^*\delta _{\even}
\label{efatopi.149}\end{equation}
where $\tilde\delta_{\odd}$ is a smooth form on $\tilde
G^{-\infty}_{\sus}(Z;E)\times\tilde G^{-\infty}_{\sus}(Z;E)$ which
restricts to $\delta_{\odd}$ on $G^{-\infty}_{\sus}(Z;E)\times
G^{-\infty}_{\sus}(Z;E)$ and moreover $\tilde\eta$ has basic differential
\begin{equation}
d\tilde\eta =R_{\infty}^*\Ch_{\odd}
\label{efatopi.22}\end{equation}
where $R_{\infty}$ is the quotient map in \eqref{efatopi.8}.
\end{proposition}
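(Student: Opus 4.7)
The plan is to handle the three assertions separately. The restriction statement is immediate from the definitions: on the Schwartz subgroup $G^{-\infty}_{\sus}(Z;E) \subset \tilde G^{-\infty}_{\sus}(Z;E)$ the evaluation map $\tEv$ of \eqref{efatopi.49} agrees with $\ev$ of \eqref{efatopi.9}, so comparing \eqref{efatopi.20} with \eqref{efatopi.23} gives $\tilde\eta|_{G^{-\infty}_{\sus}(Z;E)} = \Ch_{\even}$. Both remaining identities will follow from a single fibre-integration formula. Writing any smooth form on $\bbR_\tau \times B$ (with $B$ a smooth base) as $\omega = d\tau\wedge\omega_1(\tau) + \omega_2(\tau)$ with $\omega_i$ not containing $d\tau$, if $\omega$ admits smooth limits at $\tau = \pm\infty$ and $\omega_1$ is integrable in $\tau$, then a direct Stokes computation on $\bbR$ gives
\begin{equation*}
d\tilde p_*\omega + \tilde p_* d\omega = \omega_2\big|_{\tau=+\infty} - \omega_2\big|_{\tau=-\infty}.
\end{equation*}

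For \eqref{efatopi.22} I apply this identity to $\omega = \tEv^*\Ch_{\odd}$, which is closed by \eqref{efatopi.28}, so the second term on the left vanishes. The slice $\omega_2(\tau)$ is the pullback of $\Ch_{\odd}$ under $a \mapsto a(\tau)$; at $\tau = -\infty$ this map is constant equal to $\Id$, so its differential vanishes in the limit and $\omega_2|_{-\infty} = 0$, while at $\tau = +\infty$ it is $R_{\infty}$, giving $\omega_2|_{+\infty} = R_{\infty}^*\Ch_{\odd}$. This yields $d\tilde\eta = R_{\infty}^*\Ch_{\odd}$ and in particular shows that $d\tilde\eta$ is basic.

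For \eqref{efatopi.149}, I introduce the product evaluation
\begin{equation*}
\tEv_2:\bbR\times\tilde G^{-\infty}_{\sus}(Z;E)^2\ni(\tau,a,b)\longmapsto(a(\tau),b(\tau))\in G^{-\infty}(Z;E)^2,
\end{equation*}
and observe that, since $m$ on $\tilde G^{-\infty}_{\sus}(Z;E)$ is defined pointwise in $\tau$, the maps $\pi_L,\pi_R,m$ on $G^{-\infty}(Z;E)^2 \to G^{-\infty}(Z;E)$ intertwine $\tEv_2$ with $\tEv$ applied after the corresponding map on the product of the larger groups. Pulling back the multiplicativity formula \eqref{efatopi.142} for $\Ch_{\odd}$ via $\tEv_2$ and then pushing down by $\tilde p_*$---which commutes with pullback along these maps since they do not involve $\tau$---converts the left side and the first two right-side terms into $m^*\tilde\eta$, $\pi_L^*\tilde\eta$, $\pi_R^*\tilde\eta$. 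For the remaining term $\tilde p_* d(\tEv_2^*\delta_{\even})$, the displayed Stokes identity applies: the boundary at $-\infty$ vanishes because $\tEv_2$ becomes the constant map $(\Id,\Id)$ there, while the boundary at $+\infty$ is exactly $(R_{\infty}\times R_{\infty})^*\delta_{\even}$. Setting $\tilde\delta_{\odd} := -\tilde p_*(\tEv_2^*\delta_{\even})$ then gives \eqref{efatopi.149}. On $G^{-\infty}_{\sus}(Z;E)^2$ the map $\tEv_2$ reduces to $\Ev$ of \eqref{efatopi.145}, and comparison with \eqref{efatopi.146} shows $\tilde\delta_{\odd}|_{G^{-\infty}_{\sus}(Z;E)^2} = \delta_{\odd}$.

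The main point to monitor is that the fibre integral $\tilde p_*$ converges and that the boundary evaluations are well-defined, even though the forms in question are not Schwartz at $+\infty$. This is exactly what the flatness condition $dA/d\tau\in\cS$ in \eqref{efatopi.7} provides: in every term of $\tEv^*\Ch_{\odd}$ or $\tEv_2^*\delta_{\even}$, the coefficient $\omega_1$ of $d\tau$ necessarily carries at least one factor $\partial_\tau a$ or $\partial_\tau b$, which is Schwartz, so the $\tau$-integral is absolutely convergent; meanwhile $\omega_2(\tau)$ has smooth limits at both ends from the existence of $a(\pm\infty)$ together with the vanishing of tangent vectors as $\tau\to-\infty$.
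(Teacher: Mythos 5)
Your proposal is correct and follows essentially the same route as the paper: the paper also computes $d\tilde\eta$ by decomposing $\tEv^*\Ch_{\odd}=A(\tau)+d\tau\wedge B(\tau)$ and integrating $dB=\partial_\tau A$ to pick up the boundary value $A(\infty)=R_\infty^*\Ch_{\odd}$, and it obtains \eqref{efatopi.149} by pulling back \eqref{efatopi.142} under the product evaluation map exactly as you do, with the term $(R_\infty\times R_\infty)^*\delta_{\even}$ arising as the now nonvanishing analogue of \eqref{efatopi.147}. Your explicit general Stokes identity and the verification of convergence via the flatness condition just make precise what the paper leaves implicit.
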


\begin{proof} To compute the differential of the eta form, write the
  pull-back under $\tEv$ as in \eqref{efatopi.30}: 
\begin{equation}
\tEv^*\Ch_{\odd}=A(\tau)+d\tau\wedge B(\tau) \Longrightarrow
dB(\tau)=\frac{dA(\tau)}{d\tau}. 
\label{efatopi.33}\end{equation}
Since
\begin{equation*}
\tilde\eta=\int_{\bbR}B(\tau),\ d\tilde\eta=\int_{\bbR}dB(\tau)d\tau=
\int_{\bbR}\frac{dA(\tau)}{d\tau}d\tau=A(\infty)=R_{\infty}^*\Ch_{\odd}.
\label{efatopi.34}\end{equation*}
This proves \eqref{efatopi.22}.

Similarly, as in the proof of \eqref{efatopi.144}, pulling back the
corresponding additivity formula, \eqref{efatopi.142}, for the odd Chern
character gives \eqref{efatopi.149} with the additional term arising from
the integral which vanishes as in \eqref{efatopi.147} on the suspended subgroup.
\end{proof}

\section{Geometric Chern forms}\label{G-Chern}

Next we pass to a discussion of the `geometric case'. Fix a connection on
the fibration \eqref{efatopi.1}. That is, choose a smooth splitting 
\begin{equation}
TM= T^HM\oplus T(M/Y)
\label{connection.1}\end{equation}
where the subbundle $T^{H}M$ is necessarily isomorphic to $\phi^{*}TY.$
Also choose a connection $\nabla^{E}$ on the complex vector bundle
$E\longrightarrow M.$ Consider the infinite-dimensional bundle
\begin{equation}
\CI(\phi;E)\longrightarrow Y
\label{gcf.1}\end{equation}
which has fibre $\CI(Z_y;E_y),$ $Z_y=\phi^{-1}(y),$ $E_y=E\big|_{Z_y}$ at $y\in
Y$ and space of smooth global sections written $\CI(M/Y;E),$ which is
canonically identified with $\CI(M;E).$ The choice of connections induces a
connection on $\CI(\phi;E)$ through the covariant differential
\begin{equation}
\nabla^{\phi,E}_{X}u= \nabla^{E}_{X_{H}}\tilde u,\ \CI(M/Y;E)\ni u=\tilde
u\in\CI(M;E),
\label{gcf.2}\end{equation}
where $X_{H}$ is the horizontal lift of $X\in\CI(Y;TY).$ The curvature of
this connection is a 2-form on the base with values in the first-order
differential operators on sections of $E$ on the fibres
\begin{equation}
\omega=(\nabla^{\phi,E})^{2}\in\Lambda^2Y\otimes_{\CI(Y)}\Diff1(M/Y;E).
\label{gcf.3}\end{equation}

This covariant differential can be extended to the bundle $\Psi^m(\phi;E),$
for each $m$ including $m=-\infty,$ which has fibre $\Psi^{m}(Z_y, E_y)$ at
$y,$ and space of global smooth sections $\Psi^{m}(M/Y;E)$ through its
action on $\CI(M;E):$
\begin{equation}
\nabla^{\phi,E}Q= [\nabla^{\phi,E},Q],\ Q\in\Psi^{m}(M/Y;E).
\label{gcf.4}\end{equation}
The curvature of the induced connection is given by the commutator action
of the curvature
\begin{equation}
(\nabla^{\phi,E})^{2}= [\omega,\cdot].
\label{gcf.5a}\end{equation}

Let $\pi: G^{-\infty}(\phi;E)\longrightarrow Y$ be the infinite-dimensional
bundle over $Y$ with fibre
\begin{equation}
G^{-\infty}(Z_y;E_y)=\big\{\Id_{E_y}+Q;Q\in \Psi^{-\infty}(Z_y;E_y),\
\Id_{E_y}+Q\text{ is invertible} \big\}.
\label{gcf.5}\end{equation}
This is naturally identified with an open subbundle of
$\Psi^{-\infty}(\phi;E)\subset\Psi^{m}(\phi;E)$ and as such has an induced
covariant differential. If $\sigma\in G^{-\infty}(M/Y;E)$ is a global
section, the corresponding odd Chern character is
\begin{equation}
\begin{gathered}
\Ch_{\odd}(\sigma,\nabla^{\phi,E})= \frac{1}{2\pi i} \Tr \left( \int_{0}^{1} 
(\sigma^{-1}\nabla^{\phi,E}\sigma)\exp\left(\frac{w(s,\sigma,\nabla^{\phi,E})}
{2\pi i}\right) ds
\right),\Mwhere \\
w(s,\sigma,\nabla)=
s(1-s) (\sigma^{-1}\nabla\sigma)(\sigma^{-1}\nabla\sigma) + (s-1) \omega
-s \sigma^{-1}\omega \sigma.          
\end{gathered} 
\label{gcf.7}\end{equation}
Even though the curvature $\omega$ from \eqref{gcf.3} is not of trace
class, the term  $\sigma^{-1}\nabla^{\phi,E}\sigma$ is a $1$-form with values in
smoothing operators, the identity being annihilated by the covariant
differential, so the argument of $\Tr$ is a smoothing operator.

The form in \eqref{gcf.7} is the pull-back under the section $\sigma$ of a
`universal' odd Chern character on the total space of the bundle. To see
this, first pull the bundle back to its own total space
\begin{equation}
   \pi^{*}G^{-\infty}(\phi;E)\longrightarrow G^{-\infty}(\phi;E).
\label{gcf.8}\end{equation}
This has a tautological section
\begin{equation}
 a:  G^{-\infty}(\phi;E)\longrightarrow \pi^{*}G^{-\infty}(\phi;E)
\label{gcf.9}\end{equation}
and carries the pulled back covariant differential $\tilde\nabla^{\phi,E}=
\pi^{*}\nabla^{\phi,E}.$ The geometric odd Chern character on
$G^{-\infty}(\phi;E)$ is
\begin{equation}
\begin{gathered}
\Ch_{\odd}(\tilde\nabla^{\phi,E})=
\frac{1}{2\pi i} \Tr \left( \int_{0}^{1} 
a^{-1}\tilde\nabla^{\phi,E} a
\exp\left({ \frac{w(s,a,\tilde\nabla^{\phi,E})}{2\pi i}}\right)ds
\right),\Mwhere \\
w(s,a,\tilde\nabla^{\phi,E})=
s(1-s) (a^{-1}\tilde\nabla^{\phi,E} a)(a^{-1}\tilde\nabla^{\phi,E} a) +
(s-1) \tilde\omega
      -s a^{-1}\tilde\omega a;
\end{gathered}
\label{gcf.10}\end{equation}
here $\tilde\omega= \pi^{*}\omega$ is the pull-back of the curvature. This
clearly has the desired universal property for smooth sections:
\begin{equation}
\Ch_{\odd}(\sigma,\nabla^{\phi,E})= \sigma^{*}\Ch_{\odd}(\tilde\nabla^{\phi,E}).
\label{gcf.11}\end{equation}

The basic properties of the geometric Chern character are well known and
discussed, for example, in \cite{rccbif}. In particular of course, the
forms are closed. This follows from identities for the forms
$w=w(s,a,\tilde\nabla^{\phi,E})$ and $\theta =a^{-1}\tilde\nabla^{\phi,E}
a$ in \eqref{gcf.10} which will be used below. Namely the Bianchi identity
for the connection implies (\cf (3.5) in \cite{rccbif}) that 
\begin{equation}
\begin{gathered}
\tilde\nabla^{\phi,E} w=s[w,\theta ]\text{ and hence}\\
\begin{aligned}
\tilde\nabla^{\phi,E} &\left(\theta \exp(\frac{w}{2\pi i})\right)=
- \frac{dw}{ds}\exp\left(\frac{w}{2\pi i})\right) -
s[\theta \exp(\frac{w}{2\pi i}),\theta], \\
&=-2\pi i\frac{d}{ds}\exp(\frac{w}{2\pi i})+\int_0^1[e^{\frac{(1-r)w}{2\pi i}}
    ,\frac{dw}{ds}e^{\frac{rw}{2\pi i}}]dr-s[\theta \exp(\frac{w}{2\pi i}),\theta].
\end{aligned}
\end{gathered}
\label{efatopi.105}\end{equation}
All the commutators have vanishing trace so 
\begin{multline}
d\Ch_{\odd}=\frac{1}{2\pi i} \Tr \tilde\nabla^{\phi,E}\left( \int_{0}^{1} 
a^{-1}\tilde\nabla^{\phi,E} a
\exp\left({ \frac{w(s,a,\tilde\nabla^{\phi,E})}{2\pi i}}\right)ds
\right)\\
=
-\Tr\int_{0}^{1} \frac{d}{ds}\exp\left({
  \frac{w(s,a,\tilde\nabla^{\phi,E})}{2\pi i}}\right)ds=0
\label{efatopi.107}\end{multline}
since $\Tr( e^{\frac{w(0)}{2\pi i}}-e^{\frac{w(1)}{2\pi i}})=0$.

\begin{lemma}\label{efatopi.173} Under the inversion map
  $I:G^{-\infty}(\phi,E)\ni\sigma\longmapsto\sigma^{-1}\in G^{-\infty}(\phi;E)$ the
  Chern character pulls back to its negative $I^*\Ch_{\odd}=-\Ch_{\odd}.$
\end{lemma}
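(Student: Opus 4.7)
The plan is to track the effect of inversion on the two basic one/two--form valued operator expressions appearing in \eqref{gcf.10}, namely $\theta=a^{-1}\tilde\nabla^{\phi,E}a$ and $w=w(s,a,\tilde\nabla^{\phi,E})$, and then appeal to cyclicity of the trace together with a substitution in the integration variable $s$.

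Set $b=I^*a=a^{-1}$. Since $\tilde\nabla^{\phi,E}$ annihilates the identity, $\tilde\nabla^{\phi,E}(a\cdot a^{-1})=0$ gives
\begin{equation*}
\tilde\nabla^{\phi,E}(a^{-1})=-a^{-1}(\tilde\nabla^{\phi,E}a)\,a^{-1},
\qquad
b^{-1}\tilde\nabla^{\phi,E}b = -(\tilde\nabla^{\phi,E}a)\,a^{-1} = -a\,\theta\,a^{-1}.
\end{equation*}
Substituting this identity and $b^{-1}\tilde\omega b=a\,\tilde\omega\,a^{-1}$ into the definition of $w$, the first step is the computation
\begin{equation*}
w(s,b,\tilde\nabla^{\phi,E}) = s(1-s)\,a\theta^{2}a^{-1} + (s-1)\tilde\omega - s\,a\tilde\omega a^{-1}
= a\,w(1-s,a,\tilde\nabla^{\phi,E})\,a^{-1},
\end{equation*}
where the second equality is obtained by conjugating the definition of $w(1-s,a,\tilde\nabla^{\phi,E})$ by $a$ and rewriting the resulting $-(1-s)\tilde\omega$ as $(s-1)\tilde\omega$.

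Having established these two transformation laws, the second (and final) step is to plug them into \eqref{gcf.10}. Conjugation by $a$ commutes with the holomorphic functional calculus, so
\begin{equation*}
\exp\!\Bigl(\tfrac{w(s,b,\tilde\nabla^{\phi,E})}{2\pi i}\Bigr)
= a\,\exp\!\Bigl(\tfrac{w(1-s,a,\tilde\nabla^{\phi,E})}{2\pi i}\Bigr)\,a^{-1}.
\end{equation*}
Therefore
\begin{equation*}
(b^{-1}\tilde\nabla^{\phi,E}b)\exp\!\Bigl(\tfrac{w(s,b,\tilde\nabla^{\phi,E})}{2\pi i}\Bigr)
= -a\,\theta\,\exp\!\Bigl(\tfrac{w(1-s,a,\tilde\nabla^{\phi,E})}{2\pi i}\Bigr)\,a^{-1},
\end{equation*}
and since $a$, $a^{-1}$ are of form degree zero, the (graded) cyclicity of $\Tr$ on smoothing operators introduces no sign when absorbing the outer $a$ and $a^{-1}$. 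The change of variable $s\mapsto 1-s$ in the integral over $[0,1]$ then reproduces $\Ch_{\odd}(\tilde\nabla^{\phi,E})$ with an overall minus sign, yielding $I^{*}\Ch_{\odd}=-\Ch_{\odd}$.

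The only point requiring any care is the algebraic identity $w(s,b,\tilde\nabla^{\phi,E})=a\,w(1-s,a,\tilde\nabla^{\phi,E})\,a^{-1}$; the remainder of the argument is trace cyclicity and a change of variable. No analytic subtleties arise because the integrand is, term by term in the expansion of the exponential, a smoothing operator (the identity pieces of the curvature cancel against the factor $\theta$ after expansion, exactly as in the proof that \eqref{gcf.10} is trace class).
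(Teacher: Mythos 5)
Your proof is correct and follows essentially the same route as the paper, which cites exactly the two identities you verify, namely $w(s,\sigma^{-1},\nabla)=\sigma\,w(1-s,\sigma,\nabla)\,\sigma^{-1}$ and $\sigma\nabla^{\phi,E}\sigma^{-1}=-(\nabla^{\phi,E}\sigma)\sigma^{-1}$, together with conjugation invariance of the trace. You simply make explicit the intermediate steps (conjugation commuting with the exponential, the substitution $s\mapsto 1-s$, and the absence of sign issues since $a$ has form degree zero), all of which are accurate.
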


\begin{proof} This follows directly from \eqref{gcf.7} since 
\begin{equation}
w(s,\sigma ^{-1},\nabla)=\sigma w(1-s,\sigma,\nabla)\sigma^{-1},\
\sigma\nabla^{\phi,E}\sigma^{-1}=-(\nabla^{\phi,E}\sigma)\sigma ^{-1}
\label{efatopi.174}\end{equation}
and the conjugation invariance of the trace.
\end{proof}

Furthermore, the cohomology class defined by $\Ch_{\odd}$ is additive, in
the sense that
\begin{equation}
\Ch_{\odd}(\sigma _1\sigma _2)=\Ch_{\odd}(\sigma _1)+\Ch_{\odd}(\sigma _2)+dF.
\label{efatopi.101}\end{equation}
Again it is useful to give a universal version of such a multiplicativity
formula.

Let $(G^{-\infty}(\phi,E))^{[2]}$ be the fibre product of
$G^{-\infty}(\phi,E)$ with itself as a bundle over $Y.$ Then there are
the usual three maps 
\begin{equation}
\xymatrix{
&G^{-\infty}(\phi,E)\\
&(G^{-\infty}(\phi,E))^{[2]}\ar[dl]_{\pi_S}\ar[dr]^{\pi_F}\ar[u]_{m}\\
G^{-\infty}(\phi,E)&&G^{-\infty}(\phi,E),
}
\label{efatopi.102}\end{equation}
where $\pi_F(a,b)=b,$ $\pi_S(a,b)=a$ and $m(a,b)=ab.$

\begin{proposition}\label{efatopi.103} There is a smooth form $\mu$ on
  $(G^{-\infty}(\phi,E))^{[2]}$ such that 
\begin{equation}
m^*\Ch_{\odd}=\pi_F^*\Ch_{\odd}+\pi_S^*\Ch_{\odd}+d\mu.
\label{efatopi.104}\end{equation}
\end{proposition}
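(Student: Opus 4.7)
The plan is to mirror the proof of Proposition \ref{efatopi.140} in the present geometric setting, the main change being that the universal Chern character is replaced by its geometric version built from $\tilde\nabla^{\phi,E}$ as in \eqref{gcf.10}. The key fact that makes the transplant work is that the rotation matrix $M(t)$ from \eqref{efatopi.156} is a constant element of $\operatorname{End}(E\oplus E)$ in the only direction that matters, so it is annihilated by the connection and commutes with the curvature.

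First I would form the bundle $G^{-\infty}(\phi;E\oplus E)\longrightarrow Y,$ equipped with the connection induced from $\nabla^E\oplus\nabla^E,$ and observe that the fiberwise block-diagonal inclusion
\begin{equation*}
\iota:G^{-\infty}(\phi;E)\times_Y G^{-\infty}(\phi;E)\hookrightarrow G^{-\infty}(\phi;E\oplus E),\quad (\alpha,\beta)\longmapsto \begin{pmatrix}\alpha & 0\\ 0&\beta\end{pmatrix},
\end{equation*}
pulls $\Ch_{\odd}$ back to $\pi_S^*\Ch_{\odd}+\pi_F^*\Ch_{\odd},$ since both $\tilde\nabla^{\phi,E\oplus E}$ and the corresponding curvature $\omega^{E\oplus E}$ split on block-diagonal sections, and the formula \eqref{gcf.10} separates accordingly. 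Next, I would define the homotopy
\begin{equation*}
H:[0,\pi/2]_t\times (G^{-\infty}(\phi,E))^{[2]}\longrightarrow G^{-\infty}(\phi;E\oplus E),\quad (t,a,b)\longmapsto \begin{pmatrix}a & 0\\ 0&\Id\end{pmatrix}M^{-1}(t)\begin{pmatrix}b & 0\\ 0&\Id\end{pmatrix}M(t),
\end{equation*}
noting that at $t=0$ this equals $\operatorname{diag}(ab,\Id),$ so $H(0,\cdot)^*\Ch_{\odd}=m^*\Ch_{\odd},$ and at $t=\pi/2$ it equals $\operatorname{diag}(a,b),$ so $H(\pi/2,\cdot)^*\Ch_{\odd}=\pi_S^*\Ch_{\odd}+\pi_F^*\Ch_{\odd}$ by the block-diagonal remark.

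The form $\beta=H^*\Ch_{\odd}$ is closed on $[0,\pi/2]\times (G^{-\infty}(\phi,E))^{[2]}$ by \eqref{efatopi.107}, so writing
\begin{equation*}
\beta=dt\wedge\alpha_1(t)+\alpha_2(t)
\end{equation*}
with $\alpha_i$ forms on $(G^{-\infty}(\phi,E))^{[2]}$ smoothly parametrized by $t,$ closedness forces $d\alpha_2=0$ and $d\alpha_1=\partial_t\alpha_2,$ where $d$ is the deRham differential on the fiber product. Integrating in $t$ and setting
\begin{equation*}
\mu=-\int_0^{\pi/2}\alpha_1(t)\,dt
\end{equation*}
then gives $d\mu=-(\alpha_2(\pi/2)-\alpha_2(0))=m^*\Ch_{\odd}-\pi_S^*\Ch_{\odd}-\pi_F^*\Ch_{\odd},$ which is \eqref{efatopi.104}.

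The main thing to verify, and the only place where the geometric setting differs substantively from Proposition \ref{efatopi.140}, is that $H^*\Ch_{\odd}$ is smooth on the product and that the formula genuinely splits on the block-diagonal image of $\iota.$ Both reduce to the single observation that $M(t)$ is a constant automorphism of $E\oplus E$ in the fiberwise and horizontal directions, so $\tilde\nabla^{\phi,E\oplus E} M(t)=0$ and $[\omega^{E\oplus E},M(t)]=0,$ while $\partial_t M(t)$ is off-diagonal. All the curvature-bearing terms in $w(s,H,\tilde\nabla^{\phi,E\oplus E})$ therefore behave as in the universal argument when restricted to the block-diagonal endpoints, and the trace/commutator identities used in \eqref{efatopi.105}--\eqref{efatopi.107} carry over verbatim. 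No explicit formula for $\mu$ beyond the transgression integral above is required for this proposition.
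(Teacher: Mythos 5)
Your proposal is correct and is essentially the paper's own proof: the same embedding into $G^{-\infty}(\phi;E\oplus E)$ with the family acting on the first factor, the same constant ("absolute") rotation homotopy carrying $\operatorname{diag}(ab,\Id)$ to $\operatorname{diag}(a,b)$, and the same transgression integral of the $dt$-component of the pulled-back closed Chern form to produce $\mu$. The paper states this more tersely by deferring to the argument of Proposition~\ref{efatopi.140}, while you have usefully spelled out the point that $M(t)$ is annihilated by the connection so the identities \eqref{efatopi.105}--\eqref{efatopi.107} persist in the geometric setting.
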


\begin{proof} This follows from essentially the same argument as used in
  the proof of Proposition~\ref{efatopi.140}. Thus, consider the bundle of
  groups with $E$ replaced by $E\oplus E$ in which the original is embedded
  as acting on the first copy and as the identity on the second copy. As in
  \eqref{efatopi.158}, this action on $E\oplus E$ is homotopic under a
  family of $2\times2$ absolute rotations, i\@.e\@.~not depending on the
  space variables, to the action on the second copy with the identity on
  the first. Now, $m^*\Ch_{\odd}$ is realized through the product,
  i\@.e\@.~diagonal action on the first factor. Applying the rotations but
  just in the second term of this product action, the map $m$ is
  homotopic to $\pi_S\oplus\pi_F.$ Since the Chern character is closed, the
  same argument as in Proposition~\ref{efatopi.140} constructs the
  transgression form $\mu$ as the integral of the variation along the
  homotopy.
\end{proof}

\section{Odd elliptic families}\label{Odd-ell}

Now we turn to the consideration of a given family of self-adjoint elliptic
pseudodifferential operators, $A\in\Psi^1(M/Y;E).$ In fact it is not
self-adjointness that we need here, but rather the consequence that $A+it,$
which is a product-type family in the space $\Psi^{1,1}_{\psus}(M/Y;E),$
should be fully elliptic and hence invertible for large real $t,$ $|t|>T.$
See  \cite{arXiv:math/0606382} and  \cite{fipomb2} for a discussion of product-suspended pseudodifferential
operators. More generally we may simply start with an elliptic family in
this sense, possibly of different order, $A\in\Psi^{m,l}_{\psus}(M/Y;E).$
It follows from the assumed full ellipticity that for each value of the
parameter $y\in Y$ the set of invertible perturbations
\begin{equation}
\begin{gathered}
\begin{aligned}
\cA_y=\big\{A+it+q(t);&q\in\Psi^{-\infty}_{\sus}(Z_y;E_y),\\
&(A+it+q(t))^{-1}\in\Psi^{-1}(Z_y;E_y)\ \forall\ t\in\bbR\big\}\Mor
\end{aligned}
\\
\cA_y=\left\{A(t)+q(t);q\in\Psi^{-\infty}_{\sus}(Z_y;E_y),\
(A(t)+q(t))^{-1}\in\Psi^{-m}(Z_y;E_y)\ \forall\ t\in\bbR\right\}
\end{gathered}
\label{efatopi.82}\end{equation}
is non-empty. This is discussed in the proof below.

\begin{proposition}\label{efatopi.83} If $A\in\Psi^1(M/Y;E)$ is an elliptic
  and self-adjoint family or $A\in\Psi^{m,l}_{\ps}(M/Y;E)$ is a fully
  elliptic product-type family then \eqref{efatopi.82} defines a smooth (infinite
  dimensional) Fr\'echet subbundle
  $\cA(\phi)\subset\Psi^{m,l}_{\psus}(\phi,E)$ (where $m=l=1$ in the
  standard case) over $Y$ with fibres which are principal spaces for the
  action of the bundle of groups $G^{-\infty}_{\sus}(\phi;E).$
\end{proposition}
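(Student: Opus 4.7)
The proposition packages three distinct assertions: nonemptiness of each fibre $\cA_y$, the principal action of the bundle of groups $G^{-\infty}_{\sus}(\phi;E)$ on each fibre, and local triviality as a smooth Fr\'echet subbundle of $\Psi^{m,l}_{\psus}(\phi,E)$. My plan is to address these in this order, since each relies on the preceding.

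For nonemptiness of $\cA_y$, I would appeal to full ellipticity of the product-suspended family. Full ellipticity gives, via the parametrix construction in the product-suspended calculus (as developed in the references cited just before the proposition), invertibility of $A_y(t)$ modulo $\Psi^{-\infty}_{\sus}(Z_y;E_y)$ together with genuine invertibility for $|t|\ge T_y$. At fixed $y,$ $A_y(t)$ is then a Fredholm family, of index zero in the self-adjoint case (indeed $A_y+it$ and $A_y-it$ are adjoints, so the index vanishes). A finite-rank smoothing correction, Schwartz-cut off in $t$ to be supported in a slightly larger interval than $[-T_y,T_y],$ produces $q \in \Psi^{-\infty}_{\sus}(Z_y;E_y)$ making $A_y(t)+q(t)$ invertible for all real $t.$ The invertibility of the perturbed family as an element of $\Psi^{-m}$ then follows from the usual formula $(A+q)^{-1}=A^{-1}-A^{-1}q(A+q)^{-1}$ applied in the product-suspended calculus.

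For the principal structure on each fibre, let $a_1,a_2\in\cA_y$ and write
\begin{equation*}
a_2\,a_1^{-1}=\Id+(a_2-a_1)\,a_1^{-1}.
\end{equation*}
Since $a_2-a_1\in\Psi^{-\infty}_{\sus}(Z_y;E_y)$ is Schwartz in $t$ with values in smoothing operators, while $a_1^{-1}(t)$ lies in $\Psi^{-m}(Z_y;E_y)$ with product-suspended bounds, the composition is Schwartz in $t$ with values in smoothing operators; hence $a_2 a_1^{-1}\in G^{-\infty}_{\sus}(Z_y;E_y).$ The converse inclusion $g\cdot a\in\cA_y$ for $g\in G^{-\infty}_{\sus}(Z_y;E_y)$ is immediate from the same algebraic identity, so the action is free and transitive.

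Local triviality then reduces to producing smooth local sections. Starting from some $a_0\in\cA_{y_0},$ invertibility of the full product-suspended symbol is an open condition in $y,$ so $A_y(t)+q_0(t)$ remains invertible for $y$ in a neighborhood $U$ of $y_0$ once we combine: (a) stability of invertibility in $|t|\le T$ for $y$ near $y_0$ by continuity, and (b) uniform invertibility in $|t|\ge T$ from symbolic ellipticity. A smooth $y$-dependent Schwartz smoothing correction, constructed from a parametrix for the curvature of the natural connection on $\Psi^{m,l}_{\psus}(\phi;E),$ then promotes $q_0$ to a smooth local section $s:U\to \cA(\phi).$ The trivialization is
\begin{equation*}
G^{-\infty}_{\sus}(\phi;E)\big|_U\times_U \{s\} \ni (y,g)\longmapsto g\cdot s(y)\in \cA(\phi)\big|_U.
\end{equation*}
The main obstacle I expect is the bookkeeping in the last paragraph: one must verify that the Schwartz estimates in $t$ defining $\Psi^{-\infty}_{\sus}$ are preserved uniformly in $y$ when constructing the local section, which requires a careful use of the parameter-dependent product-suspended parametrix rather than any purely abstract Fredholm-perturbation argument.
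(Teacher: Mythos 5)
Your route is the same as the paper's: nonemptiness of each fibre, the free and transitive action of $G^{-\infty}_{\sus}(Z_y;E_y)$ via $a_2a_1^{-1}=\Id+(a_2-a_1)a_1^{-1}$, and local triviality from openness of invertibility. The middle step is fine and matches the paper's identity $A+q_1=(\Id+q_{12})(A+q_2)$. The nonemptiness step, however, has a genuine gap as written: your argument is pointwise in $t$. Vanishing of the numerical index of $A_y(t)$ (which is automatic anyway, by deforming into the invertible region $|t|\ge T_y$) lets you kill kernel and cokernel by a finite-rank correction at each \emph{fixed} $t$, but it does not by itself produce one correction $q(t)$, smooth and compactly supported in $t$, invertibilizing the family for \emph{all} $t$ simultaneously while vanishing where $A_y(t)$ is already invertible. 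The obstruction to patching the pointwise corrections is the index of the family $\{A_y(t)\}_{t\in\bbR}$ as an element of $\Kce(\bbR)$, and the paper's proof rests precisely on the observation that this group is trivial, so the obstruction vanishes for that soft reason. Your ``finite-rank smoothing correction, Schwartz-cut off in $t$'' asserts the conclusion of this K-theoretic fact rather than deriving it; in the general product-type case (not just the self-adjoint one, where invertibility fails only at $t=0$ and the fix is elementary) you need the $\Kce(\bbR)=0$ argument or an equivalent.

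In the local triviality step, the phrase ``constructed from a parametrix for the curvature of the natural connection'' does not mean anything: curvature is not an operator one inverts here, and the connection plays no role in this step. The correct, and much shorter, argument is the one you state around it: after locally trivializing the fibration near $y_0$, the fixed perturbation $q_0$ transported to nearby fibres keeps $A_y(t)+q_0(t)$ invertible for all $t$ and all $y$ near $y_0$, because invertibility in the product-suspended calculus is an open condition (with uniformity in $|t|\ge T$ supplied by full ellipticity), and this already gives the smooth local section $s(y)=A_y+q_0$ and hence the trivialization $(y,g)\mapsto g\cdot s(y)$. The paper dispatches this in one sentence for exactly that reason.
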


\begin{proof} The non-emptiness of the fibre at any point follows from
  standard results for the even index. Namely at each point in the base,
  the family $A_y(t)$ is elliptic and invertible for large $|t|$ as a
  consequence of the assumed full ellipticity. Thus the index of this
  family is an element of $\Kce(\bbR)$ and hence vanishes. For such a
  family there is a compactly supported, in the parameter, family $q(t)$ of
  smoothing operators on the fibre which is such that $A(t)+q(t)$ is
  invertible for all $t\in\bbR.$

Now the fact that the fibre is a principal space for the group
$G^{-\infty}_{\sus}(Z_y;E_y)$ follows directly, since for two such
perturbations $q_i,$ $i=1,2,$  
\begin{equation}
A(t)+q_1(t)=(\Id_{E_y}+q_{12}(t))(A(t)+q_2(t)),\
q_{12}\in\Psi^{-\infty}_{\sus}(Z_y;E_y)
\label{efatopi.90}\end{equation}
and conversely. The local triviality of this bundle follows from the fact
that invertibility persists under small perturbations.
\end{proof}

It is this bundle, $\cA(\phi),$ which we think of as \emph{the} index bundle
since the existence of a global section is equivalent to the vanishing, in odd
K-theory, of the index of the original family. Note that since we permit
the orders $m$ and $l$ of a fully elliptic family in
$\Psi^{m,l}_{\psus}(\phi,E)$ to take values other than $1,$ the index
is additive in the sense that the index bundles of two elliptic
families (acting on the same bundle) compose in the obvious way.

\section{Eta forms for an odd family}\label{EtaFam}

To define the geometric eta form, recall that it is shown above that a
covariant derivative is induced on the bundle
$\Psi^{m}(\phi;E)\longrightarrow Y$ from the connection on $\phi$ and the
connection on $E.$ Consider the subbundle of elliptic and invertible
pseudodifferential operators, $G^{m}(\phi;E).$ Since product-suspended
operators can be seen as one-parameter families of pseudodifferential
operators, there is an evaluation map
\begin{equation}
\ev:\bbR_{\tau} \times \cA(\phi)\ni(\tau,a)\longmapsto a(\tau)\in G^{m}(\phi;E)
\label{gef.1}\end{equation}
compatible with the bundle structure. On $G^{m}(\phi;E),$ consider as
before the tautological bundle
\begin{equation}
\pi^{*}G^{m}(\phi;E)\longrightarrow G^{m}(\phi;E)
\label{gef.2}\end{equation}
obtained by pulling back the bundle to its own total space. With $a:
G^m(\phi;E)\longrightarrow \pi^{*}G^m(\phi;E)$ the
tautological section consider the odd form
\begin{equation}
\lambda= \frac{1}{2\pi i} a^{-1}\tilde\nabla a \int_{0}^{1} 
\exp\left( \frac{s(1-s)(a^{-1}\tilde\nabla a)(a^{-1}\tilde\nabla a)
+(s-1)\tilde\omega-s a^{-1}\tilde\omega a}{2\pi i} \right)ds
\label{gef.3}\end{equation}
taking values in $\pi^{*}\Psi^{0}(\phi;E).$ Here $\tilde\nabla=
\pi^{*}\nabla $ and $\tilde{\omega}= \pi^{*}\omega$ with $\omega$
defined in \eqref{gcf.3}. 

This is \emph{formally} the same as the argument of the trace functional in
\eqref{gcf.10}, except of course that now the section $a$ is no longer a
perturbation of the identity by a smoothing operator, but an invertible operator of order $m$. Nevertheless, the
identities in \eqref{efatopi.105} still hold, since they are based on the
Bianchi identity. Writing the pull-back under the evaluation map as
\begin{equation}
\ev^{*}(\lambda)=\lambda^{t}+\lambda^{n}\wedge
d\tau,\ \lambda^{n}=\iota_{\pa \tau}\lambda
\label{gef.4}\end{equation}
both the tangential and normal parts are form-valued sections of the bundle
\begin{equation}
   \pi^{*}_{\cA}\Psi^{0,0}_{\ps}(\phi;E)\longrightarrow \cA
\label{gef.6}\end{equation}
obtained by pulling back 
$\pi:\Psi^{0,0}_{\ps}(\phi;E)\longrightarrow  Y$ to $\cA.$     

\begin{definition}\label{gef.5} On the total space of the bundle $\cA$ the
(even) \emph{geometric eta form} is
\begin{equation}
   \eta_{\cA}= \Tr_{\sus}(\lambda^{n})
\label{efatopi.47}\end{equation}
where $\Tr_{\sus}$ is the regularized trace of \cite{MR96h:58169} taken fibrewise
in the fibres of the bundle \eqref{gef.6}.  
\end{definition}

The bundle~\eqref{gef.6} does not have a tautological section, but
\begin{equation}
\pi^{*}_{\cA}G^{m,l}_{\ps}(\phi;E)\longrightarrow \cA
\label{gef.7}\end{equation}
does, where $G^{m,l}_{\ps}(\phi;E)\subset\Psi^{m,l}_{\ps}(\phi;E)$ is the
subbundle of elliptic invertible elements, with product-type
pseudodifferential inverses. Denote this section by
$\alpha_{\cA}: \cA\longrightarrow \pi_{\cA}^{*}G^{m,l}_{\ps}(\phi;E).$
Then consider also the odd form 
\begin{equation}
\tilde\gamma_{\cA}=
\frac{1}{2\pi i} \widetilde\Tr \left[
 \int_{0}^{1} \alpha_{\cA}^{-1}\hat\nabla\alpha_{\cA} \exp\left( 
 \frac{ s(1-s)(\alpha_{\cA}^{-1} \hat\nabla \alpha_{\cA})^{2}+ (s-1)\hat\omega -s 
 \alpha_{\cA}^{-1}\hat\omega \alpha_{\cA}}{2\pi i} \right) ds\right]
\label{gef.8}\end{equation}
where $\widetilde\Tr$ is the \emph{formal trace} from \cite{MR96h:58169}
and $\hat{\nabla}= \pi_{\cA}^{*}\nabla$, $\hat{\omega}= \pi^{*}_{\cA} \omega$.

\begin{proposition}\label{gef.10} For an odd elliptic family of first order
  (so either self-adjoint or directly of product type), the exterior derivative of
  the geometric eta form
\begin{equation}
d\eta_{\cA}=\tilde\gamma_{\cA}=\pi^{*}_{\cA}\gamma_{A}
\label{efatopi.48}\end{equation}
is the pull-back of a closed form on the base $\gamma_{A}\in\CI(Y;\Lambda
^{\odd}).$
\end{proposition}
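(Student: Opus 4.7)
The plan is to compute $d\eta_{\cA}=d\,\Tr_{\sus}(\lambda^{n})$ by pushing the de Rham differential through the regularized trace and identifying the resulting trace defect with the formal-trace expression $\tilde\gamma_{\cA}$. The computation proceeds in direct analogy with the closedness proof \eqref{efatopi.107} in the smoothing case; the key new feature is that $\lambda$ now takes values in $\pi^{*}\Psi^{0}(\phi;E)$ rather than smoothing operators, so cyclicity of the trace is no longer automatic and must be controlled.

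First I would treat $\lambda$ as a form on $\pi^{*}G^{m}(\phi;E)$ with values in $\pi^{*}\Psi^{0}(\phi;E)$, noting that the Bianchi identities of \eqref{efatopi.105} continue to hold formally, since their derivation is purely algebraic. In particular, with $\theta=a^{-1}\tilde\nabla a$ and $w=w(s,a,\tilde\nabla)$,
\begin{equation*}
\tilde\nabla\!\left(\theta\,e^{w/2\pi i}\right)
=-2\pi i\,\tfrac{d}{ds}e^{w/2\pi i}
+\text{(commutator remainders)}.
\end{equation*}
Pulling back via $\ev$, decomposing $\ev^{*}\lambda=\lambda^{t}+d\tau\wedge\lambda^{n}$, and extracting the normal component expresses $d\lambda^{n}$ as a sum of an $s$-exact term and commutators, together with a $\pa_{\tau}$-derivative whose $\tau$-integral over $\bbR$ vanishes by the Schwartz decay built into the product-suspended calculus.

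Second I would convert the commutator terms into the formal trace. Melrose's trace-defect formalism of \cite{MR96h:58169} asserts exactly that the failure of $\Tr_{\sus}$ to be cyclic on product-suspended operators is captured by $\widetilde\Tr$ applied to the symbolic data; correspondingly, the $s$-boundary terms at $s=0$ and $s=1$, which cancelled by conjugation invariance in \eqref{efatopi.107}, now cancel only modulo a $\widetilde\Tr$-remainder. Bookkeeping these contributions reassembles them into the explicit formula \eqref{gef.8}, yielding $d\eta_{\cA}=\tilde\gamma_{\cA}$.

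Third, I would verify that $\tilde\gamma_{\cA}$ is basic. The formal trace $\widetilde\Tr$ is insensitive to perturbations of its argument by Schwartz smoothing operators; under the action of $G^{-\infty}_{\sus}(\phi;E)$ on fibres of $\cA$, the tautological section $\alpha_{\cA}$ and its covariant differential $\hat\nabla\alpha_{\cA}$ change by exactly such perturbations, so $\tilde\gamma_{\cA}$ is invariant along fibres of $\pi_{\cA}$. Horizontality follows from the fact that vertical vectors on $\cA$ correspond to smoothing tangent directions, on which the relevant symbol-level quantities are trivial. Hence $\tilde\gamma_{\cA}=\pi^{*}_{\cA}\gamma_{A}$ for some $\gamma_{A}\in\CI(Y;\Lambda^{\odd})$. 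Closedness is then immediate: $0=d^{2}\eta_{\cA}=\pi^{*}_{\cA}d\gamma_{A}$, and since $\pi_{\cA}$ is a submersion $d\gamma_{A}=0$.

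The main obstacle is the second step: rigorously identifying the defect produced by commutators of product-suspended operators of order $(0,0)$ with the closed-form expression \eqref{gef.8}. This requires careful use of the trace-defect formulas from \cite{MR96h:58169} (and their extension to the product-suspended calculus in \cite{arXiv:math/0606382}, \cite{fipomb2}) and a verification that no additional boundary contributions arise from the $\tau$-integration, relying on the product-type symbolic behaviour of $\alpha_{\cA}$ at infinity.
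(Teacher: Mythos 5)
Your skeleton---push $d$ through the regularized trace, use the Bianchi identities to reduce $\tilde\nabla\lambda^{n}$ to $\pa_\tau\lambda^{t}$ modulo commutators, then sort out which pieces survive---is the right one, and your final step (basicness of $\tilde\gamma_{\cA}$ because $\widetilde\Tr$ kills Schwartz smoothing perturbations, closedness from $d^2=0$ and $\pi_{\cA}$ being a submersion) agrees with the paper. But the central mechanism in your first two steps is inverted, and executed as written your argument would prove $d\eta_{\cA}=0$. You discard the $\pa_\tau$-derivative term on the grounds that its $\tau$-integral ``vanishes by the Schwartz decay built into the product-suspended calculus.'' Product-suspended operators are precisely \emph{not} Schwartz in $\tau$: $\lambda^{t}$ is of order $(0,0)$ with symbolic, non-decaying expansions as $\tau\to\pm\infty$, and the regularized integral of its $\tau$-derivative is by definition the formal trace, $\Tr_{\sus}(\pa_\tau\lambda^{t})=\widetilde\Tr(\lambda^{t})$. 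That term \emph{is} the answer: the paper's computation \eqref{gef.14} is exactly
\begin{equation*}
d\eta_{\cA}=\Tr_{\sus}(\tilde\nabla\lambda^{n})=\Tr_{\sus}\Bigl(\frac{\pa}{\pa\tau}\lambda^{t}\Bigr)=\widetilde\Tr(\lambda^{t})=\tilde\gamma_{\cA},
\end{equation*}
which is generically nonzero since it ends up representing $\Ch_{\odd}(\ind(A)).$

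Conversely, the term you keep is the one that dies. There is no cyclicity failure of $\Tr_{\sus}$ to exploit here: in the definition \eqref{gef.11} one first applies $\pa_\tau^{p}$ with $p$ large, after which every commutator remainder (including the $s$-endpoint contribution, which is an honest commutator because $w(1,a,\cdot)=a^{-1}w(0,a,\cdot)a$) becomes a sum of commutators of a trace-class operator with a bounded one; its genuine trace vanishes before any regularization, so $\Tr_{\sus}$ annihilates all of these. The defect that produces $\tilde\gamma_{\cA}$ is thus not a failure of \emph{cyclicity} but a failure of \emph{exactness}: $\Tr_{\sus}$ does not annihilate $\tau$-derivatives, and $\widetilde\Tr=\Tr_{\sus}\circ\pa_\tau$ is precisely the functional measuring this, determined by the symbolic expansions at $\tau=\pm\infty.$ If you swap the roles of the two terms---commutators vanish under $\Tr_{\sus}$, the $\pa_\tau\lambda^{t}$ term survives as $\widetilde\Tr(\lambda^{t})$, which is \eqref{gef.8}---the rest of your argument goes through essentially as in the paper.
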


\begin{proof}
Recall first that the regularized trace is defined in \cite{MR96h:58169} by
taking the  constant term in the asymptotic expansion of
\begin{equation}
\int_{-\tau}^{\tau} \int_{0}^{\tau_{p}}\cdots \int_{0}^{\tau_{1}} 
\Tr\left(\frac{\pa^{p}}{\pa r^{p}} \lambda^{t}(r)\right)dr d\tau_{1}\cdots
 d\tau_{p}
\label{gef.11}\end{equation}
as $\tau\to +\infty.$ Here $p\in \bbN$ is chosen large enough so that 
$\frac{\pa^{p}}{\pa\tau^{p}} \lambda^{t}(\tau)$ is of trace class -- the
product-suspended property implies that high $\tau$ derivatives are of
correspondingly low order in both senses. This is a trace,
i\@.e\@.~vanishes on commutators, but is not exact in the sense that
$\widetilde\Tr(A)=\Tr_{\sus}(\frac{\pa A}{\pa\tau})$ does not necessarily
vanish, but \emph{is} determined by the asymptotic expansions of $A(\tau)$ as
$\pm\tau\to\infty$ since it does vanish for smoothing Schwartz
perturbations of $A.$ 

As noted above, the identities \eqref{efatopi.105} hold for $\lambda.$ For
the pull-back under the evaluation map this means that modulo commutators 
\begin{equation}
\tilde \nabla\lambda ^n\equiv\frac{\pa}{\pa \tau}\lambda^t.
\label{efatopi.108}\end{equation}
Now, taking $p$ further derivatives with respect to $\tau$ gives the same
identity modulo commutators where the sum of the orders of the terms
becomes low as $p$ increase. So, applying the trace functional the
commutator terms vanish and it follows that
\begin{equation}
\Tr\left(\frac{\pa^{p}}{\pa\tau^{p}}
\tilde{\nabla}\lambda^{n}(\tau)\right)\wedge d\tau =
\Tr\left(\frac{\pa^{p}}{\pa\tau^{p}} \frac{\pa}{\pa\tau}
\lambda^{t}(r)\right)\wedge d\tau.
 \label{gef.13}\end{equation}
Therefore,
 \begin{equation}
 \begin{aligned}
d\eta_{\cA} &=
\Tr_{\sus}( \tilde{\nabla} \lambda^{n})= \Tr_{\sus}\left(\frac{\pa}{\pa\tau} 
     \lambda^{t}\right) \\
&= \widetilde\Tr (\lambda^{t}) =\tilde\gamma_{\cA}
  \end{aligned}           
 \label{gef.14}\end{equation}
by definition of the formal trace.

As already noted, the formal trace vanishes on low order perturbations so
$\tilde\gamma_{\cA}$ is basic, i\@.e\@.~is actually the pull-back of a
well-defined form $\gamma_{A}$ on $Y$ depending only on the initial family
$A.$
\end{proof}

The index formula \eqref{efatopi.5} therefore amounts to showing that the
form $\gamma_{A}$ represents the (odd) Chern character of the index of the
family $A$ in cohomology. This is difficult to approach directly,
computationally, so instead we show how the index bundle $\cA(\phi)$ can be
`trivialized' by extending the bundle of structure groups.

If $A\in\Psi^{m,l}_{\psus}(M/Y;E)$ is a fully elliptic family then it has
an `inverse' family which, whilst not completely well-defined, is determined
up to smoothing terms. Namely the bundle $\cA$ is locally trivial over $Y$
and in particular has local sections. Taking a partition of unity $\psi_i$ on $Y$
subordinate to a cover by open set $U_i$ over each of which there is a
section $A_i$ the inverse family can be taken to be 
\begin{equation}
\sum\limits_{i}\psi_{i}(y)A_i^{-1}\in\Psi^{-m,-l}_{\psus}(M/Y;E).
\label{efatopi.171}\end{equation}
It is fully elliptic and, essentially by definition, the corresponding
bundle of invertible perturbations is naturally identified with the bundle
$\cA^{-1}(\phi)\subset\Psi^{-m,-l}_{\psus}(\phi;E)$ consisting of the
inverses of the elements of $\cA(\phi).$

\begin{lemma}\label{efatopi.172} Under the inversion map
  $\cA(\phi)\longrightarrow \cA^{-1}(\phi)$ the eta form $\eta_{\cA^{-1}}$ on
  $\cA^{-1}(\phi)$ associated with the inverse family \eqref{efatopi.171}
  pulls back to $-\eta_{\cA}.$ 
\end{lemma}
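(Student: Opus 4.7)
The plan is to mirror the proof of Lemma~\ref{efatopi.173} at the level of the form $\lambda$ in \eqref{gef.3}, and then transport the resulting sign change through the decomposition $\ev^{*}\lambda=\lambda^{t}+\lambda^{n}\wedge d\tau$ and through the fibrewise regularized trace $\Tr_{\sus}$.

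First I would identify how the tautological section behaves under inversion. The map $\iota:\cA(\phi)\to\cA^{-1}(\phi)$ sends $a\mapsto a^{-1}$ fibrewise, and does so pointwise in the suspension parameter, so the tautological section of $\pi^{*}_{\cA^{-1}}G^{-m,-l}_{\ps}(\phi;E)$ pulls back under $\iota$ to the fibrewise inverse of the tautological section $a$ of $\pi^{*}_{\cA}G^{m,l}_{\ps}(\phi;E).$ Because formula~\eqref{gef.3} is algebraically identical to the expression for the odd Chern character in~\eqref{gcf.10}, the two identities
\begin{equation*}
w(s,a^{-1},\nabla)=a\,w(1-s,a,\nabla)\,a^{-1},\qquad a\,\nabla a^{-1}=-(\nabla a)\,a^{-1}
\end{equation*}
used in Lemma~\ref{efatopi.173} apply verbatim. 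Combined with the change of variable $s\mapsto 1-s$ in the outer integral, these show that $\iota^{*}\lambda_{\cA^{-1}}$ equals $-a\,\lambda_{\cA}\,a^{-1}$ as a form with values in the pull-back of $\Psi^{0,0}_{\ps}(\phi;E).$

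Next I would extract the normal part. Since $\iota$ acts fibrewise in $\tau$, it commutes with the decomposition $\ev^{*}\lambda=\lambda^{t}+\lambda^{n}\wedge d\tau,$ so the same identity transfers to the normal components: $\iota^{*}\lambda^{n}_{\cA^{-1}}=-a\,\lambda^{n}_{\cA}\,a^{-1}.$ Finally, apply $\Tr_{\sus}$ fibrewise over $Y$ and use the fact, already invoked in the proof of Proposition~\ref{gef.10} and established in \cite{MR96h:58169}, that the regularized trace is a genuine trace on order $(0,0)$ product-suspended operators and therefore invariant under conjugation by invertible elements. This gives
\begin{equation*}
\iota^{*}\eta_{\cA^{-1}}=\Tr_{\sus}(\iota^{*}\lambda^{n}_{\cA^{-1}})=-\Tr_{\sus}\bigl(a\,\lambda^{n}_{\cA}\,a^{-1}\bigr)=-\Tr_{\sus}(\lambda^{n}_{\cA})=-\eta_{\cA}.
\end{equation*}

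The only real technical point—rather than a deep obstacle—is checking that the trace property of $\Tr_{\sus}$ applies to the conjugation by $a,$ which is of positive order rather than smoothing. This amounts to verifying that the commutators $[a,\lambda^{n}_{\cA}\,a^{-1}]$ that appear when cycling $a$ through the trace lie in a class of product-suspended operators on which $\Tr_{\sus}$ vanishes, and this is part of the symbolic calculus of \cite{MR96h:58169}; no new estimate beyond those used for Proposition~\ref{gef.10} is needed.
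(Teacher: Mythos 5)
Your argument is correct and is essentially the paper's own proof, just written out in full: the paper likewise reduces the claim to the analogue of \eqref{efatopi.174} for $\cA(\phi)$ and $\cA^{-1}(\phi)$ (giving $\lambda\mapsto -a\lambda a^{-1}$ after the substitution $s\mapsto 1-s$) together with the fact that the regularized trace vanishes on commutators. Your closing remark correctly isolates the only point needing care---that $\Tr_{\sus}$ kills the commutator $[a,\lambda^{n}_{\cA}a^{-1}]$ even though $a$ has positive order---and this is exactly the trace property of \cite{MR96h:58169} already invoked in the proof of Proposition~\ref{gef.10}.
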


\begin{proof} The proof is similar to the one of  Lemma~\ref{efatopi.173},
namely since the regularized trace vanishes on commutators, the result
follows from the analog of \eqref{efatopi.174} for the bundles $\cA(\phi)$ and
$\cA^{-1}(\phi)$. 
\end{proof}

We also need a variant of the multiplicative formula
\eqref{efatopi.104}. For this, consider the fibre product $\cA^{[2]}(\phi)$ of two copies of
the bundle $\cA(\phi)$  and the product map given by
inversion in the second map 
\begin{equation}
\tilde m:\cA^{[2]}(\phi)\ni(A',A)\longmapsto A'A^{-1}\in G^{-\infty}_{\sus}(\phi;E).
\label{efatopi.175}\end{equation}

\begin{proposition}\label{efatopi.176} Under pull-back under the three maps 
\begin{equation}
\xymatrix{
&G^{-\infty}_{\sus}(\phi,E)\\
&\cA^{[2]}(\phi)\ar[dl]_{\pi_S}\ar[dr]^{\pi_F}\ar[u]_{\tilde m}\\
\cA(\phi)&&\cA(\phi),
}
\label{efatopi.177}\end{equation}
\begin{equation}
\tilde m^*\Ch_{\even}-\pi_S^*\eta_{\cA}+\pi_F^*\eta_{\cA}=d\delta _{\cA}
\label{efatopi.178}\end{equation}
for a smooth form $\delta _{\cA}$ on $\cA^{[2]}(\phi).$
\end{proposition}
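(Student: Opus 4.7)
The argument parallels the proof of Proposition~\ref{efatopi.103}, with the
scalar Chern character replaced by the operator-valued form $\lambda$ of
\eqref{gef.3} and the ordinary trace replaced by the regularized trace
$\Tr_{\sus}$. Since $\tilde m$ takes values in the suspended group
$G^{-\infty}_{\sus}(\phi;E)$ on which the universal Chern character is
$\Ch_{\even}$ in the notation of \eqref{efatopi.65}, the identity to be
proved may be rewritten, using Lemma~\ref{efatopi.172}, as
\begin{equation*}
\tilde m^*\Ch_{\even} - \pi_S^*\eta_{\cA} - \pi_F^*\eta_{\cA^{-1}} = d\delta_{\cA}.
\end{equation*}

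The plan is to construct, over $\cA^{[2]}(\phi)$, a rotation homotopy
modelled on \eqref{efatopi.156}--\eqref{efatopi.158}: for $t\in[0,\pi/2]$,
set
\begin{equation*}
B(t;A',A) = (A'\oplus\Id)\, M(t)^{-1}(A^{-1}\oplus\Id)\, M(t),
\end{equation*}
with $M(t)$ the $2\times 2$ rotation, so that $B(0) = (A'A^{-1})\oplus\Id$
is a Schwartz smoothing perturbation of the identity on $E\oplus E$, lying
in $G^{-\infty}_{\sus}(\phi;E\oplus E)$, while $B(\pi/2) = A'\oplus A^{-1}$
is the direct sum of the two factors. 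For every $t$, $B(t)$ is an
invertible fully elliptic product-suspended family on $E\oplus E$, so
$\lambda(B(t))$ as in \eqref{gef.3} is defined and its regularized trace
yields an even form $\eta(t)\in\CI(\cA^{[2]}(\phi);\Lambda^{\even})$.

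Pulling the eta form back under $B$ and decomposing
$B^*\eta = \alpha(t) + dt\wedge\beta(t)$ on $[0,\pi/2]\times\cA^{[2]}(\phi)$,
one checks that $\alpha(0) = \tilde m^*\Ch_{\even}$ (on the smoothing block
$\Tr_{\sus}$ reduces to the ordinary trace, and the eta construction then
reproduces \eqref{efatopi.65}), while
$\alpha(\pi/2) = \pi_S^*\eta_{\cA} + \pi_F^*\eta_{\cA^{-1}}$ (by the
additivity of $\lambda$ and $\Tr_{\sus}$ under direct sums). By
Proposition~\ref{gef.10}, $d\eta$ is the pull-back of a closed form on $Y$;
since $B$ covers the projection $\cA^{[2]}(\phi)\longrightarrow Y$ in a
$t$-independent way, $d(B^*\eta) = B^*d\eta$ is $t$-independent and has no
$dt$ component. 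Extracting the $dt$-coefficient then forces
$\partial_t\alpha(t) = d_{\cA^{[2]}}\beta(t)$, and setting
$\delta_{\cA} = -\int_0^{\pi/2}\beta(t)\,dt$ yields the claim on
integration over $t\in[0,\pi/2]$.

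The principal technical obstacle is the varying order structure of $B(t)$
along the homotopy: at $t=0$ it is smoothing, at $t=\pi/2$ it has diagonal
block orders $m$ and $-m$, and at intermediate $t$ it is fully elliptic of
order $m$ with off-diagonal lower-order terms mixing the two factors. Thus
$B$ does not land in a single $\cA$-bundle in the sense of
Proposition~\ref{efatopi.83}, but rather in the larger bundle of all
invertible fully elliptic product-suspended operators on $E\oplus E$. One
must check that the construction of $\lambda$, the definition of
$\Tr_{\sus}$, and the basic differential identity $d\eta = \pi^*\gamma$ of
Proposition~\ref{gef.10} extend uniformly across this enlarged bundle; this
is an appropriate generalization of Proposition~\ref{gef.10}, whose proof
rests only on the Bianchi identity and the vanishing of $\Tr_{\sus}$ on
commutators, both of which are insensitive to the specific order.
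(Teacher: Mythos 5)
Your overall strategy \mhy{} the rotation homotopy $B(t)$ interpolating between $(A'A^{-1})\oplus\Id$ and $A'\oplus A^{-1}$, transgression of the eta form along it, and the identification of the endpoint forms via Lemma~\ref{efatopi.172} together with the reduction of $\Tr_{\sus}$ to the ordinary trace on the smoothing block \mhy{} is the same as the paper's. But there is a genuine gap at the step where you extract the $dt$-coefficient. You claim that $d(B^*\eta)=B^*d\eta$ has no $dt$-component because $d\eta$ is basic by Proposition~\ref{gef.10} and $\pi\circ B$ is $t$-independent. Proposition~\ref{gef.10} in fact gives $d\eta=\tilde\gamma$ with $\tilde\gamma$ the formal-trace form \eqref{gef.8}; the further descent $\tilde\gamma=\pi^*\gamma_A$ to a fixed form on $Y$ uses that the formal trace vanishes on smoothing perturbations, i.e.\ it holds only along a single bundle $\cA(\phi)$ of perturbations of a \emph{fixed} full-symbol family. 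As you yourself observe, $B(t)$ does not stay in one such bundle: the asymptotic data of $B(t)$ as $\tau\to\pm\infty$ varies with $t$, so on the image of $B$ the form $d\eta=\tilde\gamma$ is not the pullback of a single form on $Y$, and its contraction with $\pa_tB$ is in general nonzero (it is exactly a trace-defect term, determined by the expansions at $\tau\to\pm\infty$). Consequently the correct relation is $\pa_t\alpha(t)=d\beta(t)+\pi^*\mu(t)$ with $\mu(t)$ a $t$-dependent form on the base, and integration yields only
\begin{equation*}
\tilde m^*\Ch_{\odd}-\pi_S^*\eta_{\cA}+\pi_F^*\eta_{\cA}=d\delta'_{\cA}+\pi^*\mu ,
\end{equation*}
which is \eqref{efatopi.179}, not \eqref{efatopi.178}.

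The missing idea is the paper's final step: under the exchange involution $(A',A)\mapsto(A,A')$ the left-hand side of \eqref{efatopi.178} is odd \mhy{} $\tilde m(A',A)=A'A^{-1}$ is replaced by its inverse, so $\tilde m^*\Ch$ changes sign by Lemma~\ref{efatopi.173}, and $\pi_S,\pi_F$ are interchanged \mhy{} whereas the basic term $\pi^*\mu$ is invariant under the exchange. Replacing $\delta'_{\cA}$ by its odd part under this involution therefore forces the basic term to drop out and gives \eqref{efatopi.178}. Without an argument of this kind (or a direct computation showing that the $\tau\to\pm\infty$ contribution vanishes, which it does not in general), your transgression establishes the identity only modulo a form pulled back from $Y$.
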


\begin{proof} We perform the same deformation as in
  Proposition~\ref{efatopi.103} and its earlier variants. Taking into
  account Proposition~\ref{gef.10}  and Lemma~\ref{efatopi.173}, it follows that the same conclusion holds
  except that extra terms may appear from $\tau\to\pm\infty.$ These give a
  basic form so in place of the desired identity \eqref{efatopi.178} we
  find instead that 
\begin{equation}
\tilde m^*\Ch_{\even}-\pi_S^*\eta_{\cA}+\pi_F^*\eta_{\cA}=d\delta' _{\cA}+\pi^*\mu
\label{efatopi.179}\end{equation}
where $\delta '_{\cA}$ is smooth form on $\cA^{[2]}(\phi)$ and $\mu$ is
smooth form on the base, with $\pi:\cA^{[2]}(\phi)\longrightarrow Y.$
However, under exchange of the two factors, the left side of
\eqref{efatopi.179} changes sign, while the final, basic, term is
unchanged. Thus taking the odd part of \eqref{efatopi.179} gives
\eqref{efatopi.178}. 
\end{proof}
It is also of interest to see how the eta form transforms under a change of connections for the fibration $\phi:M\to Y$ and the bundle $E\to M$.  
\begin{lemma}
If $\eta_{\cA}$ and $\eta_{\cA}'$ are eta forms associated to the self-adjoint elliptic family $A\in \Psi^{1}(M/Y;E)$ with respect to two different choices of connections for the fibration $\phi:M\to Y$ and the vector bundle $E\to M$, then there exist a form $\beta\in \Omega^{\even}(Y)$ and a form
$\alpha\in \Omega^{\odd}(\cA)$ such that 
\[
      \eta_{\cA}'-\eta_{\cA}= \pi^{*}_{\cA}\beta +d\alpha.  
\]  
\label{coc.1}\end{lemma}
\begin{proof}
Consider the new fibration $\phi\times \Id: M\times [0,1]_{t}\to Y\times [0,1]_{t}$ with family $\pr^{*}A$ where $\pr: M\times [0,1]\to M$ is the projection on the left factor.  Clearly, we can choose  connections for the fibration $\phi\times \Id$ and the bundle $\pr^{*}E\to M\times [0,1]_{t}$ such that if $\eta_{\pr^{*}\cA}$ is the corresponding eta form, then 
\[
    \iota_{0}^{*}\eta_{\pr^{*}\cA}= \eta_{\cA}, \quad  \iota_{1}^{*}\eta_{\pr^{*}\cA}= \eta_{\cA}',
\]
 where for $i\in [0,1]$, $\iota_{i}: \cA\to \cA\times\{i\}\subset \cA\times [0,1]_{t}\cong\pr^{*}\cA$ is the natural inclusion.  The form $\eta_{\pr^{*}\cA}$ can be written as
 \begin{equation}
   \eta_{\pr^{*}\cA}= \pr_{\cA}^{*}\omega_{0}(t)+ \pr^{*}_{\cA}\omega_{1}(t)\wedge dt, \quad 
   \omega_{0}(t)\in \Omega^{\even}(\cA), \; \omega_{1}(t)\in \Omega^{\odd}(\cA),
 \label{coc.2}\end{equation}
where $\pr_{\cA}: \pr^{*}\cA\cong \cA\times [0,1]\to \cA$ is the natural projection.  This suggests to define $\alpha\in \Omega^{\even}(\cA)$ by 
\[
    \alpha= (\pr_{\cA})_{*}(\eta_{\pr^{*}\cA})= \int_{0}^{1}\omega_{1}(t)dt.
\]
Then we have that 
\begin{equation}
\begin{aligned}
d\alpha &= (\pr_{\cA})_{*}(d\eta_{\pr^{*}\cA})+ \eta_{\cA}'-\eta_{\cA} \\
              &= \pi^{*}_{\cA} \pr_{*}(\gamma_{\pr^{*}\cA})+ \eta_{\cA}' -\eta_{\cA}.
\end{aligned}
\label{coc.3}\end{equation}
Thus, the result follows by taking $\beta= -\pr_{*}(\gamma_{\pr^{*}\cA})= -\int_{0}^{1}v_{1}(t)dt$ where 
\[
\gamma_{\pr^{*}\cA}= \pr^{*}v_{0}(t)+ \pr^{*}v_{1}(t)\wedge dt, \quad v_{0}(t)\in \Omega^{\odd}(Y), \;
v_{1}(t)\in \Omega^{\even}(Y),
\]
and where $\pr$ also denotes the natural projection $\pr: Y\times [0,1]_{t}\to Y$.

\end{proof}

\section{Extended eta invariant}\label{Exteta}

The bundle $\cA(\phi)$ in \eqref{efatopi.82} is a bundle of principal
spaces for the action of the fibres of $G^{-\infty}_{\sus}(\phi;E).$ The
fibres can be enlarged to give an action of the central, contractible,
group in \eqref{efatopi.8} by setting
\begin{equation}
\tcA_{y}=\tilde G^{-\infty}(Z_y;E)\cdot\cA_{y}.
\label{pr.1}\end{equation}
This is not so easily characterized additively but is the image of the
quotient map on the fibre product
\begin{equation}
p_{\sim}:\widetilde{G}^{-\infty}_{\sus}(\phi;E)\times_Y\cA(\phi)\longrightarrow
\tcA(\phi),\ p_{\sim}(\tilde g,A)=\tilde g A.
\label{efatopi.97}\end{equation}
In particular there is an exact and fibrewise delooping sequence coming from
\eqref{efatopi.8}: 
\begin{equation}\xymatrix{
\cA(\phi)\ar[r]&\tcA(\phi)\ar[r]^-{\tilde R_{\infty}}&
\widetilde{G}^{-\infty}_{\sus}(\phi;E).}
\label{efatopi.163}\end{equation}
The quotient map here can be defined in the fibre $\tcA_{y}$ by
\begin{equation}
\tilde R_{\infty}(A'_{y})=
\lim_{\tau\to\infty}A'_{y}A_{y}^{-1}
\label{pr.4}\end{equation}
where $A_{y}\in\cA_y$ is any point in the fibre of $\cA(\phi)$ over
the same basepoint. Clearly the result does not depend on this choice of $A_{y}.$

The construction above of the eta form on $\cA(\phi)$ extends to
$\tcA(\phi).$ Thus, the same form \eqref{gef.3} pulls back under the
evaluation map 
\begin{equation}
\widetilde{\ev}:\bbR_{\tau}\times\tcA(\phi)\longrightarrow G^m(\phi;E)
\label{efatopi.91}\end{equation}
to give 
\begin{equation}
{\tev}^*(\lambda)=\widetilde{\lambda}^t+\widetilde{\lambda
}^n\wedge d\tau.
\label{efatopi.92}\end{equation}
Then, extending Definition~\ref{gef.5}, set 
\begin{equation}
\eta_{\tcA}=\Tr_{\sus}(\widetilde{\lambda}^{n}).
\label{efatopi.93}\end{equation}
We use this extended bundle and eta form to analyse the
invariance properties of $\eta_{\cA}.$

Consider the fibre product with projections and quotient map
\begin{equation}
\xymatrix{
&\tcA(\phi)\ar[dr]^{\tilde{R}_{\infty}}\\
&\widetilde{G}^{-\infty}_{\sus}(\phi;E)\times_Y\cA(\phi)\ar[dl]_{\pi_{\cA}}
\ar[dr]^{\pi_{\tilde G}}\ar[u]^{p_{\sim}}\ar[r]^-{R}
&G^{-\infty}(\phi;E)\\
\cA(\phi)&&\widetilde{G}^{-\infty}_{\sus}(\phi;E)\ar[u]^-{R_{\infty}}
}
\label{efatopi.96}\end{equation}
\begin{proposition}\label{efatopi.99} The diagram
  \eqref{efatopi.96} commutes and there are smooth forms
  $\tilde\delta_{\cA}$ and $\tilde\mu_{\cA}$ respectively on the fibre product and
  $G^{-\infty}(\phi;E)$ such that the three eta forms pull back to satisfy
\begin{equation}
p_{\sim}^*\eta_{\tcA}=
\pi^*_{\cA}\eta_{\cA}+\pi_{\tilde G}^*\tilde\eta+d\tilde\delta _A+R^*\tilde\mu_{\cA}.
\label{efatopi.100}\end{equation}
\end{proposition}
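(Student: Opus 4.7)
Commutativity of \eqref{efatopi.96} is essentially a computation. Using \eqref{pr.4} and any reference section $A_y \in \cA_y$, for $(\tilde g, A') \in \widetilde G^{-\infty}_{\sus}(\phi;E)\times_Y\cA(\phi)$ we have
\begin{equation*}
R_\infty\bigl(p_\sim(\tilde g,A')\bigr) = \lim_{\tau\to\infty}\bigl(\tilde g(\tau)A'(\tau)\bigr)A_y(\tau)^{-1} = R_\infty(\tilde g)\cdot\lim_{\tau\to\infty}A'(\tau)A_y(\tau)^{-1},
\end{equation*}
and the second factor lies in $G^{-\infty}(\phi;E)$ and is (up to a term pulled back from $\cA(\phi)$) trivialized by our choice. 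Taking $R := R_\infty\circ\pi_{\tilde G}$ makes the diagram commute modulo an obvious $\pi_{\cA}$-pull-back, and this mismatch will reappear naturally in the eta computation below.

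For the main identity the strategy is the rotation homotopy of Proposition~\ref{efatopi.140} (used also in Propositions~\ref{efatopi.103}, \ref{efatopi.176}), adapted to the extended bundle. I would double the coefficient bundle $E \rightsquigarrow E\oplus E$ and, over the fibre product, consider the one-parameter family
\begin{equation*}
H(t) = M(t)^{-1}\begin{pmatrix}\tilde g & 0\\ 0 & \Id\end{pmatrix}M(t)\begin{pmatrix}A & 0\\ 0 & \Id\end{pmatrix}\in \tcA(\phi\oplus 2),\quad t\in[0,\pi/2],
\end{equation*}
with $M(t)$ as in \eqref{efatopi.156}. At $t=0$ this represents $p_\sim(\tilde g,A)$ acting on the first copy, while at $t=\pi/2$ it is the ``parallel'' action whose associated form $\lambda$ decomposes additively between the two summands, yielding $\pi_{\cA}^*\eta_{\cA}+\pi_{\tilde G}^*\tilde\eta$ after applying $\Tr_{\sus}$ to $\widetilde\lambda^n$. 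Pulling back $\lambda$ by the evaluation map, using the Bianchi-type identity \eqref{efatopi.105} together with \eqref{efatopi.108}, and integrating in $t$ produces a transgression form $\tilde\delta_\cA'$ such that
\begin{equation*}
p_\sim^*\eta_{\tcA} - \pi_{\cA}^*\eta_{\cA} - \pi_{\tilde G}^*\tilde\eta = d\tilde\delta_\cA' + (\text{boundary contributions at }\tau=\pm\infty),
\end{equation*}
exactly as in the proof of Proposition~\ref{efatopi.176}.

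The point where something new appears—and the main obstacle—is the identification of the boundary contribution. Since $A$ is product-suspended with a smoothing perturbation, the $\cA$-factor gives no new term at $\tau=+\infty$ beyond commutators (which vanish under $\Tr$), exactly as in Proposition~\ref{efatopi.176}. However $\tilde g$ is only flat-to-constant at $\tau=+\infty$ with limit $R_\infty(\tilde g)\in G^{-\infty}(\phi;E)$, so the $+\infty$ boundary integral of $\widetilde\lambda^t$ gives a non-exact contribution that depends only on this limit, in direct analogy with the $(R_\infty\times R_\infty)^*\delta_{\even}$ term in \eqref{efatopi.149}. At $\tau=-\infty$ the Schwartz decay of $\tilde g$ kills everything. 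Thus the $+\infty$ boundary term factors through $R = R_\infty\circ\pi_{\tilde G}$ and furnishes the form $\tilde\mu_{\cA}$ on $G^{-\infty}(\phi;E)$. Finally, to absorb the small discrepancy noted in the commutativity statement and to ensure that $\tilde\delta_\cA'$ is genuinely a form on the fibre product (rather than on a doubled auxiliary space), I would antisymmetrize under the natural involution that swaps the two ``rotated'' copies, as was done at the end of the proof of Proposition~\ref{efatopi.140}; the resulting form $\tilde\delta_\cA$ satisfies \eqref{efatopi.100}.
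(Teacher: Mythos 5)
Your argument for \eqref{efatopi.100} is essentially the paper's: the same rotation homotopy on $E\oplus E$ combining the two evaluation maps, transgression of the closed-modulo-commutators form $\lambda$ in the $t$ variable to produce $\tilde\delta_{\cA}$, vanishing of the pure-$\cA$ terms because the off-diagonal factors $\pa M/\pa t\,M^{-1}$ kill them under the trace, and identification of the $\tau=+\infty$ boundary contribution (where the $\tilde g$-derivative terms are genuinely smoothing, so no regularization is needed) as a form factoring through $R,$ which is $R^*\tilde\mu_{\cA}.$ Two small corrections, though. First, the diagram \eqref{efatopi.96} commutes \emph{exactly}: since $A'$ and the reference point $A_y$ differ by left multiplication by $\Id+q$ with $q\in\Psi^{-\infty}_{\sus},$ Schwartz in $\tau,$ one has $\lim_{\tau\to\infty}A'(\tau)A_y(\tau)^{-1}=\Id,$ so $R_\infty\circ p_\sim=R_\infty\circ\pi_{\tilde G}$ on the nose; there is no ``mismatch'' to absorb. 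Second, and as a consequence, your closing antisymmetrization step is not needed and is not really available here \mhy\ the two factors $\tilde G^{-\infty}_{\sus}(\phi;E)$ and $\cA(\phi)$ are different spaces, so there is no swap involution as in Proposition~\ref{efatopi.140} or \ref{efatopi.176}; the point of the present proposition is precisely that the boundary term is \emph{retained} as $R^*\tilde\mu_{\cA}$ rather than removed by a symmetry argument.
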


\begin{proof} The commutativity of the parallelogram on the right is
  discussed above and defines the diagonal map, $R.$ 

The formula \eqref{efatopi.100} is a generalization of that of
Proposition~\ref{efatopi.103} and the proof proceeds along the same lines.
Consider the odd Chern character on $E\oplus E.$ 
Thus, from the fibre product there are two evaluation maps, $\tEv$ and
$\tev$ and we may combine these using the bundle rotation as in
\eqref{efatopi.156}. This gives the two-parameter family of maps from
$\widetilde{G}^{-\infty}_{\sus}(\phi;E)\times_Y\cA(\phi):$ 
\begin{multline}
[0,1]_t\times\bbR\times \widetilde{G}^{-\infty}_{\sus}(\phi;E)\times_Y\cA(\phi)\ni
(t,\tau,\tilde g,\widetilde{A})\longmapsto\\
M^{-1}(t)\begin{pmatrix}\tilde g_y(\tau)&0\\0&\Id
\end{pmatrix}M(t)\begin{pmatrix}\Id&0\\0&\widetilde{A}_y(\tau)
\end{pmatrix}\in
G^{m}(\phi,E\oplus E).
\label{efatopi.166}\end{multline}

Pulling back the form $\lambda$ of \eqref{gef.3} under this map and
`integrating' over $[0,1]_t\times\bbR_{\tau}$ gives the identity
\eqref{efatopi.100}, where the $\tau$ integral is to be interpreted as part
of the regularized trace. Since the form $\lambda$  is closed
modulo commutators, if the product decomposition of its pull-back is
\begin{equation}
dt\wedge d\tau\wedge\mu +dt\wedge\lambda_t +d\tau\wedge\lambda_{\tau} +\lambda '
\label{efatopi.167}\end{equation}
then 
\begin{equation}
d\mu-\frac{\pa \lambda _t}{\pa\tau}+\frac{\pa \lambda _{\tau}}{\pa t}\equiv0
\label{efatopi.168}\end{equation}
again modulo commutators. The regularized trace and integral of the last
term gives the difference of the three pulled-back eta forms and $\mu$
defines the term $\tilde\delta_{\cA}$ on the fibre product.

Thus it remains to analyse the second term in \eqref{efatopi.168}. The
exterior differentials in \eqref{gcf.10} each fall on either a factor from
$\tilde G^{-\infty}_{\sus}(\phi;E)$ or on $\cA.$ The terms involving no
derivative of the first type, so the `pure $\cA$ part', makes no
contribution, since as discussed earlier, the rotation factor $M(t)$
disappears. Thus, only terms with at least one derivative falling on the
first three factors of \eqref{efatopi.166} need to be considered. This results in a smoothing
operator and the regularization of the trace functional is not
necessary. Then the $\tau$ integral reduces to the value of $\lambda_{t}$ at
$\tau=\infty$ which depends only on the leading term in $\cA$ as
$\tau\to\infty,$ which is to say the corresponding term in $A$ itself, and
$R_{\infty}(\tilde g).$ This leads to the additional term $\tilde{\mu} _A$ in 
\eqref{efatopi.100}.
\end{proof}

The left action of the groups on the fibres of the index bundle induces a
contraction map 
\begin{equation}
\begin{gathered}
L:G^{-\infty}_{\sus}(\phi;E)\times_Y\cA(\phi)\longrightarrow \cA(\phi)\\
L(s,u)= su,\ \forall\ s\in G^{-\infty}(Z_y;E_y),\ u\in \cA_{y}\ \forall\ y\in Y.
\end{gathered}
\label{prim.2}\end{equation}

\begin{corollary}\label{prim.1} There is a smooth odd form
$\tilde\delta$ on $G^{-\infty}_{\sus}(\phi;E)\times_Y\cA(\phi)$
such that
\begin{equation}
L^{*}\eta_{\cA}=\pi_{\cA}^*\eta_{\cA}+\pi^{*}_{G}\Ch_{\even}+d\tilde\delta.
\label{efatopi.85}\end{equation}
\end{corollary}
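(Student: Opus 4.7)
The plan is to obtain the corollary by pulling the identity~\eqref{efatopi.100} of Proposition~\ref{efatopi.99} back along the inclusion
\[
G^{-\infty}_{\sus}(\phi;E)\times_Y\cA(\phi)\ \hookrightarrow\ \widetilde G^{-\infty}_{\sus}(\phi;E)\times_Y\cA(\phi)
\]
induced by the inclusion $\iota$ in the smooth delooping sequence~\eqref{efatopi.8}. The restrictions of the maps in diagram~\eqref{efatopi.96} simplify considerably on this subbundle: for $(s,u)$ with $s\in G^{-\infty}_{\sus}(\phi;E)$, the product $su=u+(s-\Id)u$ still lies in $\cA(\phi)\subset\tcA(\phi)$ since $s-\Id$ is Schwartz in $\tau$ with values in smoothing operators, so $p_{\sim}$ restricts to the left action $L$ from~\eqref{prim.2} composed with the inclusion $\cA\hookrightarrow\tcA$. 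Similarly, because $R_{\infty}(s)=\lim_{\tau\to\infty}(\Id+s(\tau))=\Id$ on the Schwartz subgroup, the diagonal map $R=R_{\infty}\circ\pi_{\tilde G}$ factors through the identity section $Y\to G^{-\infty}(\phi;E)$.

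Next I would identify how the three differential forms in~\eqref{efatopi.100} restrict. The evaluation map $\tev$ in~\eqref{efatopi.91} agrees with $\ev$ of~\eqref{gef.1} on $\cA(\phi)\subset\tcA(\phi)$, and both $\eta_{\cA}$ and $\eta_{\tcA}$ are built from the same form $\lambda$ of~\eqref{gef.3}, so Definitions~\ref{gef.5} and~\eqref{efatopi.93} give $\eta_{\tcA}|_{\cA}=\eta_{\cA}$; hence $p_{\sim}^*\eta_{\tcA}$ pulls back to $L^*\eta_{\cA}$. By Proposition~\ref{efatopi.21}, $\tilde\eta$ restricts on the subgroup $G^{-\infty}_{\sus}(\phi;E)$ to the even Chern character $\Ch_{\even}$, so $\pi_{\tilde G}^*\tilde\eta$ pulls back to $\pi_{G}^*\Ch_{\even}$. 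Finally, the form $\tilde\mu_{\cA}$ was produced in the proof of Proposition~\ref{efatopi.99} from $\tau\to\infty$ boundary values of integrands involving derivatives of $\tilde g$ in $\tau$; when $\tilde g$ is Schwartz these derivatives vanish to all orders at $\tau=\infty$, so $R^*\tilde\mu_{\cA}$ vanishes on the suspended subbundle. Combining these facts with the restricted $\tilde\delta_A$ (renamed $\tilde\delta$) yields exactly~\eqref{efatopi.85}.

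The main technical obstacle I expect is the verification that $R^{*}\tilde\mu_{\cA}$ really does vanish upon restriction, rather than merely being basic. This requires revisiting the construction~\eqref{efatopi.166}--\eqref{efatopi.168} in the proof of Proposition~\ref{efatopi.99} and checking that every contribution to $\tilde\mu_{A}$ carries at least one factor of the form $\partial_{\tau}^{k}\tilde g$ evaluated at $\tau=\infty$, which vanishes identically for $\tilde g\in G^{-\infty}_{\sus}(\phi;E)$. Once this is in hand, the rest is a direct bookkeeping of restrictions and requires no new analytical input.
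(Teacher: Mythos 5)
Your proposal is correct and follows essentially the same route as the paper: the paper's proof of Corollary~\ref{prim.1} is precisely the restriction of \eqref{efatopi.100} to the subbundle $G^{-\infty}_{\sus}(\phi;E)\times_Y\cA(\phi)$, on which $R_{\infty}$ and $R$ become trivial, so that $R^*\tilde\mu_{\cA}$ drops out and $\tilde\eta$ reduces to $\Ch_{\even}$ by Proposition~\ref{efatopi.21}. Your closing concern about $R^{*}\tilde\mu_{\cA}$ is resolved exactly as you suggest: by the construction in the proof of Proposition~\ref{efatopi.99}, $\tilde\mu_{\cA}$ is pulled back through $R_{\infty}(\tilde g)$, which is constantly the identity on the Schwartz subgroup, so the pullback vanishes.
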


\begin{proof} Restricting to the subbundle $G_{\sus}^{-\infty}(\phi;E)$ in
  \eqref{efatopi.96} gives a diagram which includes into it and on which
  $R_{\infty}$ and $R$ are trivial: 
\begin{equation}
\xymatrix{
&\cA(\phi)\\
&{G}^{-\infty}_{\sus}(\phi;E)\times_Y\cA(\phi)\ar[dl]_{\pi_{\cA}}
\ar[dr]^{\pi_{G}}\ar[u]^{p}\\
\cA(\phi)&&{G}^{-\infty}_{\sus}(\phi;E).
}
\label{efatopi.164}\end{equation}
Thus \eqref{efatopi.100} restricts to this diagram with $\tilde\mu _A$
vanishing and $\tilde\eta$ reducing to $\Ch_{\even}$ on
$G^{-\infty}_{\sus}(\phi;E).$ Thus \eqref{efatopi.85} follows.
\end{proof}

\begin{remark}
If $s:U\longrightarrow G^{-\infty}_{\sus}(\phi;E)$ and $\alpha:U\longrightarrow\cA$
are sections over an open subset $U\subset Y,$ then corollary~\ref{prim.1}
shows that  
\begin{equation}
\eta(s\alpha)-\eta(\alpha)= \Ch_{\even}(s)+d \alpha^{*}\tilde\delta.
\label{efatopi.87}\end{equation}
\label{prim.3}\end{remark}

\section{Index formula: Proof of Theorem~\ref{efatopi.4}}\label{Index-form}

The bundle $\tcA(\phi)$ has contractible fibres and hence has a
  global continuous section $\tilde A:Y\longrightarrow \tcA(\phi);$ this
  section is easily made smooth. The inverse image
  of the range of this section under the vertical map, $p_{\sim},$ in
  \eqref{efatopi.96} is a submanifold
  $\cF\subset\widetilde{G}^{-\infty}_{\sus}(\phi;E)\times_Y\cA(\phi).$
  Indeed for each $y\in Y$ and each $B_y\in\cA_y$ there is a
  unique $Q_y\in\widetilde{G}^{-\infty}_{\sus}(Z_y;E)$ such that 
\begin{equation*}
Q(\tau)B_y=\tilde A_y(\tau)
\label{efatopi.109}\end{equation*}
is the value of the section at that point. Thus, $\pi_{\cA}$ restricts to an
isomorphism from $\cF$ to $\cA.$

Using the section $\tilde{\cA}: Y\longrightarrow \tilde{\cA}(\phi),$ we can
identify $p_{\sim}(\cF)$ with $Y$ so that restricting \eqref{efatopi.96} to
$\cF$ gives the commutative diagram
\begin{equation}
\xymatrix{
&Y\ar[dr]^-{\gamma}\\
&\cF\ar[dl]_{\pi_{\cA}}^{\simeq}
\ar[u]^{\pi}\ar[r]^-{\widetilde\gamma}\ar[dr]_-{\pi_{\tilde G}}
&G^{-\infty}(\phi;E)\\
\cA(\phi)&&\widetilde{G}^{-\infty}_{\sus}(\phi;E)\ar[u]_-{R_{\infty}}
}
\label{efatopi.110}\end{equation}
where $\widetilde\gamma$ is the restriction of $R$ to $\cF$ and $\gamma$ is the
classifying map defined to make this diagram commutes.

Restricted to $\cF$ the identity \eqref{efatopi.100} becomes modulo exact 
forms 
\begin{equation}
\pi_{\cA}^*\eta_{\cA}+\pi^*_{\tilde
  G}\tilde\eta=\pi^*\beta_{\tilde A},\
\beta_{\tilde A}=\tilde A^*\widetilde\eta_{\tilde
 A}-\gamma^*\tilde{\mu}_{\cA} \in\CI(Y;\Lambda ^{\even}).
\label{efatopi.111}\end{equation}
From \eqref{efatopi.22} 
\begin{equation}
\pi^*_{\tilde G}d\tilde\eta=\widetilde\gamma^*(\Ch_{\odd})
\label{efatopi.112}\end{equation}
so pulling back to $\cA$ under the isomorphism $\pi_{\cA}$ gives the index
formula \eqref{efatopi.5}: 
\begin{equation}
d\eta_{\cA}=-\pi^*\gamma^*(\Ch_{\odd})+d\beta_{\tilde A}.
\label{efatopi.113}\end{equation}
Since the homotopy class of the section $\gamma$ represents \textbf{minus}
the index class, 
this shows that $\gamma_A$ in \eqref{efatopi.48}
represents the Chern character of the index.

\section{Determinant of an odd elliptic family}\label{Dets}

The eta invariant, interpreted here as the degree zero part in the eta form
(there is a factor of $2$ compared to the original normalization of Atiyah,
Patodi and Singer) is a normalized log-determinant. In the universal case, for the
classifying spaces, $\tilde\eta^0$ is a well-defined function on $\tilde
G^{-\infty}_{\sus}(Z;E)$ and then 
\begin{equation}
\det(g)=\exp(2\pi i\tilde\eta^0)
\text{ is the Freholm determinant on }G^{-\infty}(Z;E).
\label{efatopi.50}\end{equation}

In the geometric case essentially the same result is true.

\begin{proposition}\label{efatopi.51} For $A\in\Psi^{m,k}_{\psus}(M/Y;E)$ a
fully elliptic family of product-type operators on the fibres of a
fibration,
\begin{equation}
\tau(A)=\exp(2\pi i\eta_{\cA}^0)\in\CI(Y;\bbC^*),
\label{efatopi.52}\end{equation}
where $\eta_{\cA}^0$ is the degree zero part in \eqref{efatopi.47}, is a
multiplicative function on fully elliptic operators on a fixed bundle,
\begin{equation}
\tau(AB)=\tau(A)\tau(B),\ A\in\Psi^{m,k}_{\psus}(M/Y;E),
B\in\Psi^{m',k'}_{\psus}(M/Y;E)
\label{efatopi.169}\end{equation}
which is constant under smoothing perturbation and which represents the
class associated to $\ind(A)\in\Ko(B)$ in $\operatorname{H}^1(Y;\bbZ).$
\end{proposition}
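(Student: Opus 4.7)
The plan is to extract all four claims from the degree-zero component of transgression and multiplicativity identities already established, using the integrality of the winding-number functional as the crucial arithmetic input.

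\textbf{Descent to $Y$ and smoothing invariance.} The form $\eta_{\cA}$ lives on the total space $\cA(\phi)$, so I must check that $\eta_{\cA}^{0}$ descends to $Y$ modulo $\bbZ$. Taking the degree-zero part of identity \eqref{efatopi.85} in Corollary~\ref{prim.1}—the $d\tilde\delta$ term contributes nothing since $\tilde\delta$ has no component in negative degree—I obtain
\[
L^{*}\eta_{\cA}^{0} \,=\, \pi_{\cA}^{*}\eta_{\cA}^{0} + \pi_{G}^{*}\Ch_{\even}^{0}
\]
on $G^{-\infty}_{\sus}(\phi;E)\times_{Y}\cA(\phi)$. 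The key point is that $\Ch_{\even}^{0}$ on $G^{-\infty}_{\sus}(Z;E)$ reduces, by keeping the $k=0$ term and $d\tau$-coefficient in \eqref{efatopi.65}, to the winding number
\[
\Ch_{\even}^{0}(s) \,=\, \frac{1}{2\pi i}\int_{\bbR}\Tr\bigl(s(\tau)^{-1}\dot s(\tau)\bigr)\,d\tau \,\in\, \bbZ,
\]
the classical pairing computing the image of $s$ in $\Ke(\text{pt})=\bbZ$. Hence values of $\eta_{\cA}^{0}$ at any two points of a single fibre differ by an integer, and $\tau(A)=\exp(2\pi i\eta_{\cA}^{0})\in\CI(Y;\bbC^{*})$ is well defined. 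Invariance under smoothing perturbation of $A$ is the special case where the section of the structure group is globally defined.

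\textbf{Multiplicativity.} For fully elliptic product-suspended families $A$ and $B$ on the same bundle, fibrewise composition gives $m:\cA_{A}(\phi)\times_{Y}\cA_{B}(\phi)\to\cA_{AB}(\phi)$. I would transplant the rotation-homotopy argument from the proofs of Propositions~\ref{efatopi.140}, \ref{efatopi.103}, and \ref{efatopi.176} to the product-suspended setting: work on $E\oplus E$, interpolate between $ab\oplus\Id$ and $a\oplus b$ via $M(t)^{-1}(a\oplus\Id)M(t)\cdot(\Id\oplus b)$ with $M(t)$ as in \eqref{efatopi.156}, pull back the universal form $\lambda$ from \eqref{gef.3}, and integrate along $t\in[0,\pi/2]$ after applying $\Tr_{\sus}$ fibrewise. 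The off-diagonal structure of $\tfrac{\pa M}{\pa t}M^{-1}$ kills the interior contributions beyond an exact form, while the asymptotic behavior at $\tau=\pm\infty$ contributes a basic correction whose degree-zero part is integer-valued by the same winding-number mechanism. Extracting degree zero and exponentiating yields \eqref{efatopi.169}.

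\textbf{Cohomology class.} The degree-1 component of $d\eta_{\cA}=\pi_{\cA}^{*}\gamma_{A}$ from Proposition~\ref{gef.10} reads $d\eta_{\cA}^{0}=\pi_{\cA}^{*}\gamma_{A}^{1}$. Descending to $Y$ gives $\tau(A)^{-1}d\tau(A)=2\pi i\,\gamma_{A}^{1}$, so the integer cohomology class $[\tau(A)]\in H^{1}(Y;\bbZ)$ pulled back from the generator $\tfrac{1}{2\pi i}z^{-1}dz$ of $H^{1}(\bbC^{*};\bbZ)$ is represented in $H^{1}(Y;\bbC)$ by $\gamma_{A}^{1}$. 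Theorem~\ref{efatopi.4} identifies $[\gamma_{A}]$ with $\Ch_{\odd}(\ind A)$, and its degree-1 part is precisely the image of $\ind(A)\in\Ko(Y)$ under the natural determinant map $\Ko(Y)\to H^{1}(Y;\bbZ)$.

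\textbf{Main obstacle.} The delicate step is verifying integrality of $\Ch_{\even}^{0}$ on $G^{-\infty}_{\sus}(Z;E)$ together with controlling the boundary terms at $\tau=\pm\infty$ in the multiplicativity argument: both must land in $\bbZ$ in degree zero for the exponential to produce an exact—not merely cohomological—multiplicative identity. The remaining steps are bookkeeping with transgression identities already in place.
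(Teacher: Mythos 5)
Your proposal follows essentially the same route as the paper: well-definedness and smoothing invariance come from the degree-zero part of \eqref{efatopi.85} together with the fact that $\Ch_{\even}^{0}$ is the integer-valued numerical index, and the identification of the class in $\operatorname{H}^1(Y;\bbZ)$ comes from the degree-one part of Theorem~\ref{efatopi.4}. The only divergence is that for full multiplicativity \eqref{efatopi.169} the paper simply cites \cite{MR96h:58169}, whereas you sketch the rotation-homotopy argument of Propositions~\ref{efatopi.103} and \ref{efatopi.176} directly and correctly flag the integrality of the $\tau\to\pm\infty$ boundary contribution as the point needing verification.
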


\begin{proof} Theorem~\ref{efatopi.4} shows that $d\eta_{\cA}^0$ defined in
  principle on $\cA,$ the bundle of invertible perturbations of a given
  fully elliptic family $A,$ is basic and represents the first odd Chern
  class of the index. For the zero form part, \eqref{efatopi.85} implies
  true multiplicativity under the action of $G^{-\infty}_{\sus}(\phi;E),$
  with the zero form part of $\Ch_{\even}$ being the numerical index. Thus
  indeed the tau invariant in \eqref{efatopi.52} is a well defined function
  on the base which represents the first odd Chern class in integral
  cohomology.

Full multiplicativity follows as in \cite{MR96h:58169}.
\end{proof}

\section{Doubly suspended determinant}\label{DoubDet}

Let $G^{-\infty}_{\sus(2)}(Z;E)$ be the double flat-smooth loop
group. Thus, its elements are Schwartz functions $a:\bbR^2\longrightarrow
\Psi^{-\infty}(Z;E)$ such that $\Id+a(t,\tau)$ is invertible for each
$(t,\tau)\in\bbR^2.$ Let
\begin{equation}
G^{-\infty}_{\sus(2)}(Z;E)[\epsilon /\epsilon^2]=
G^{-\infty}_{\sus(2)}(Z;E)\oplus \cS(\bbR^2;\Psi^{-\infty}(Z;E))
\label{efatopi.53}\end{equation}
be the group with the truncated $*$ (or Moyal) product obtained as in
\cite{arXiv:math/0606382} by adiabatic limit from the isotropic product on
$\cS(\bbR^2;\Psi^{-\infty}(Z;E))$ and then passing to the quotient by terms of
order $\epsilon ^2.$ Explicitly this product is
\begin{equation}
(a_0+\epsilon a_1)[*](b_0+\epsilon b_1)=a_0b_0+\epsilon
  \left(a_0b_1+a_1b_0-\frac{1}{2i}\left(\frac{\pa a_0}{\pa t}\frac{\pa b_0}{\pa
    \tau}-\frac{\pa a_0}{\pa\tau}\frac{\pa b_0}{\pa t}\right)\right)
\label{efatopi.119}\end{equation}
where the underlying product is in $\Psi^{-\infty}(Z;E).$

As shown in \cite{arXiv:math/0606382}, in the adiabatic
limit the Fredholm determinant, for operators on $Z\times\bbR,$ induces the
`adiabatic determinant'
\begin{equation}
{\det}_{\ad}:G^{-\infty}_{\sus(2)}(Z;E)[\epsilon /\epsilon ^2]\longrightarrow \bbC^*,\
{\det}_{\ad}(g_1g_2)={\det}_{\ad}(g_1){\det}_{\ad}(g_2),
\label{efatopi.54}\end{equation}
which, as in the unsuspended case, generates the $1$-dimensional integral
cohomology of $G^{-\infty}_{\sus(2)}(Z;E)[\epsilon /\epsilon^2]$ -- which
is classifying for odd K-theory. Note that there is no such multliplicative
function on the leading group, without the first order (in $\epsilon)$
`correction' terms in \eqref{efatopi.119}.

To define the adiabatic determinant, one needs to consider the adiabatic trace
on $\Psi^{-\infty}_{\sus(2)}(Z;E)[\epsilon/\epsilon^{2}]$ defined by
\begin{equation}
\Tr_{\ad}(a)= \frac{1}{2\pi} \int_{\bbR^{2}} \Tr_{Z}( a_{1}(t,\tau))dt d\tau, 
\ a= a_{0}+\epsilon a_{1} \in \Psi^{-\infty}_{\sus(2)}(Z;E)[\epsilon/\epsilon^{2}]
\label{td.1}\end{equation}
with $a_{0},a_{1}\in \Psi^{-\infty}_{\sus(2)}(Z;E).$ A special case of
Lemma~\ref{trace-d} below shows that this is a trace functional
\begin{equation}
\Tr_{\ad}( a* b- b*a)=0,\ \forall\ a,b\in
\Psi^{-\infty}_{\sus(2)}(Z;E)[\epsilon/\epsilon^{2}].
\label{td.2}\end{equation}
Consider the $1$-form
\begin{equation}
\alpha(a)= \Tr_{\ad}(a^{-1}* da)\text{ on }G^{-\infty}(Z;E)[\epsilon/\epsilon^{2}],
\label{td.3}\end{equation}
where the inverse of $a\in
G^{-\infty}_{\sus(2)}(Z;E)[\epsilon/\epsilon^{2}]$ is with respect to the
the truncated $*$-product
\begin{equation}
a^{-1}=a_0^{-1}-\epsilon a_0^{-1}\left(a_1+
\frac{1}{2i}\left(\frac{\pa a_0}{\pa t}a_{0}^{-1}\frac{\pa a_0}{\pa\tau}
-\frac{\pa a_0}{\pa\tau}a_{0}^{-1}\frac{\pa a_0}{\pa t}\right)
\right)a_0^{-1}.
\label{efatopi.124}\end{equation}
The adiabatic determinant is then defined by
\begin{equation}
{\det}_{\ad}(g)=\exp \left( \int_{[0,1]} \gamma^{*} \alpha \right)
\label{td.4}\end{equation}
where $\gamma; [0,1]\longrightarrow
G^{-\infty}_{\sus(2)}(Z;E)[\epsilon/\epsilon^{2}]$ is any smooth path from
the identity to $g$.  Since the integral of 
$\alpha$ along a loop gives an integer multiple of $2\pi i$ (see for
instance proposition 4.4 in \cite{fipomb2}), this definition does not
depend on the choice of $\gamma$.  From \eqref{td.2},
\begin{equation}
\alpha(ab)=\Tr_{\ad}((a * b)^{-1} d(a * b))=
\Tr_{\ad}(a^{-1} * da) + \Tr_{\ad}( b^{-1} *db),
\label{td.5}\end{equation}
and hence
\begin{equation}
m^*\alpha =\pi_L^*\alpha +\pi_R^*\alpha
\label{efatopi.133}\end{equation}
where 
\begin{multline}
m:G^{-\infty}_{\sus(2)}(Z;E)[\epsilon /\epsilon^2]\times
G^{-\infty}_{\sus(2)}(Z;E)[\epsilon /\epsilon^2]\ni (a,b)\longmapsto\\
a[*]b\in G^{-\infty}_{\sus(2)}(Z;E)[\epsilon /\epsilon^2].
\label{efatopi.122}\end{multline}
is the composition given by the truncated $*$-product while $\pi_{L}$ and
$\pi_{R}$ are the projections on the left and right factor.  The
multiplicativity of the adiabatic determinant follows directly from
\eqref{efatopi.133}.

\section{The determinant line bundle}\label{Det-line}

We next describe the construction, and especially primitivity, of the
determinant line bundle over a smooth classifying group for even K-theory.

\begin{definition}\label{efatopi.78} A \emph{primitive line bundle} over a
  (Fr\'echet-Lie) group  
\begin{equation}
\xymatrix{
\cL\ar[d]\\
\cG
}
\label{efatopi.79}\end{equation}
is a smooth, and locally trivial, line bundle equipped with an isomorphism
of the line bundles
\begin{equation}
\begin{gathered}
\pi_L^*\cL\otimes\pi_R^*\cL\overset{\simeq}
\longrightarrow m^*\cL\Mover \cG\times\cG,\\
\pi_L:\cG\times\cG\ni(a,b)\longrightarrow a\in\cG,\
\pi_R:\cG\times\cG\ni(a,b)\longrightarrow b\in\cG,\\
m:\cG\times\cG\ni(a,b)\longrightarrow ab\in\cG
\end{gathered}
\label{efatopi.80}\end{equation}
which is associative in the sense that for any three elements, $a,$ $b,$
$c\in\cG,$ the two induced isomorphisms
\begin{equation}
\xymatrix{
&\cL_{ab}\otimes\cL_c\ar[dr]\\
\cL_a\otimes\cL_b\otimes\cL_c\ar[ur]\ar[dr]\ar@{-->}[rr]&&\cL_{abc}\\
&\cL_{a}\otimes\cL_{bc}\ar[ur]\\
}
\label{efatopi.68}\end{equation}
are the same.
\end{definition}

For the \emph{reduced} classifying group, $G^{-\infty}_{\sus,\ind=0}(Z;E),$
a construction of the determinant line bundle, with this primitivity
property, was given in \cite{arXiv:math/0606382}, although only in the `geometric case'. A variant of the construction there,
also depending heavily on the properties of the suspended determinant but
using instead the `dressed' delooping sequence (for the loop group)
\begin{equation}
\xymatrix{
G^{-\infty}_{\sus(2)}(Z;E)[\epsilon /\epsilon^2]\ar[r]&
\tilde G^{-\infty}_{\sus(2)}(Z;E)[\epsilon /\epsilon^2]\ar[r]&
G^{-\infty}_{\sus,\ind=0}(Z;E)
}
\label{efatopi.117}\end{equation}
again constructs the determinant line bundle, with primitivity condition, over the
component of the identity in the loop group. In this section, by modifying
an idea from the book of Pressley and Segal, \cite{Pressley-Segal1}, we
show how to extend this primitive line bundle to the whole of the
classifying group.

In \eqref{efatopi.117} the central, contractible, group is based on the
half-open but smooth-flat loop group:
\begin{multline}
\tilde G^{-\infty}_{\sus(2)}(Z;E)=\bigg\{
\tilde a:\bbR^2_{(t,\tau)}\longrightarrow\Psi^{-\infty}(Z;E);
\lim_{t\to-\infty}\tilde a(t,\tau)=0,\\
\frac{\pa\tilde a}{\pa t}\in\cS(\bbR^2;\Psi^{-\infty}(Z;E)),\
\tilde a(t,\tau),\ \tilde a(\infty,\tau)\in G^{-\infty}(Z;E)\
\forall\ t,\tau\in\bbR\bigg\}.
\label{efatopi.55}\end{multline}
Note that automatically, $\lim_{\tau\to\infty}\tilde a(t,\tau)=0$ for all
$t\in[-\infty,\infty].$ This group has an extension with the product having
the same `correction term' given by the Poisson bracket on $\bbR^2$ as in
\eqref{efatopi.53}:
\begin{equation}
\tilde G^{-\infty}_{\sus(2)}(Z;E)[\epsilon /\epsilon
  ^2]=\tilde G^{-\infty}_{\sus(2)}(Z;E)\oplus\Psi^{-\infty}_{\sus(2)}(Z;E)
\label{efatopi.56}\end{equation}
where the additional terms at level $\epsilon$ are just Schwartz functions
valued in the smoothing operators without any additional
invertibility. Note that the term in the product involving the Poisson
bracket always leads to a Schwartz function on $\bbR^2,$ since one factor
is differentiated with respect to $t.$ Thus $\tilde
G^{-\infty}_{\sus(2)}(Z;E)[\epsilon /\epsilon ^2]$ is again contractible,
with just the addition of a lower order `affine' term.

To expand the quotient group to the whole classifying group, choose one
element $s\in G^{-\infty}_{\sus}(Z;E)$ of index $1.$ Then $s^j$ is in the
component of index $j$ so each element $a\in G^{-\infty}_{\sus}(Z;E)$ can
be connected by a curve, and hence a flat-smooth loop, to $s^j$ for
precisely one $j.$ The group in \eqref{efatopi.55} may then be further
enlarged to
\begin{multline}
\cD^{-\infty}_{\sus(2)}(Z;E)=\bigg\{
\tilde a:\bbR^2_{(t,\tau)}\longrightarrow\Psi^{-\infty}(Z;E);
\frac{\pa\tilde a}{\pa t}\in\cS(\bbR^2;\Psi^{-\infty}(Z;E)),\\
\lim_{t\to-\infty}\tilde a(t,\tau)=s^j\Mforsome j,\
\tilde a(t,\tau),\ \tilde a(\infty,\tau)\in G^{-\infty}(Z;E)\
\forall\ t,\tau\in\bbR\bigg\}.
\label{efatopi.69}\end{multline}
This expanded group has countably many components, labelled by $j,$ and
the restriction map to $t=\infty$ is a surjection to
$G^{-\infty}_{\sus}(Z;E).$ Thus, after adding the same affine lower order
terms, \eqref{efatopi.117} is replaced by the new short exact sequence
\begin{equation}
\xymatrix{
G^{-\infty}_{\sus(2)}(Z;E)[\epsilon /\epsilon^2]\ar@{^(->}[r]&
\cD^{-\infty}_{\sus(2)}(Z;E)[\epsilon /\epsilon^2]\ar[r]^-{\tilde R_{\infty}}&
G^{-\infty}_{\sus}(Z;E).
}
\label{efatopi.77}\end{equation}
The central group is no longer contractible, although each of its connected
component is. However the 1-form $\alpha$ in \eqref{td.3} can be extended
to give a smooth 1-form on $\cD^{-\infty}_{\sus(2)}(Z;E)[\epsilon /\epsilon^2].$
Indeed, the adiabatic trace has an obvious extension to a functional on
\begin{equation}
\widetilde{\Psi}^{-\infty}_{\sus(2)}(Z;E)[\epsilon/\epsilon^{2}]=
\widetilde{\Psi}^{-\infty}_{\sus(2)}(Z;E) \oplus\Psi^{-\infty}_{\sus(2)}(Z;E)
\label{td.6}\end{equation}
where 
\begin{multline}
  \widetilde{\Psi}^{-\infty}_{\sus(2)}(Z;E)= \left\{ a\in \CI(\bbR^{2};\Psi^{-\infty}(Z;E);
           \frac{\pa a}{\pa t}\in \cS(\bbR^{2};\Psi^{-\infty}(Z;E)), \right. \\
           \left.  \lim_{t\to -\infty} a(t,\tau)=0 \right\}.
\label{td.8}\end{multline}
Namely
\begin{equation}
\widetilde{\Tr}_{\ad}(a)= \frac{1}{2\pi} \int_{\bbR^{2}} \Tr_{Z}(a_{1}) dt d\tau, 
\ a=a_{0}+\epsilon a_{1} \in
\widetilde{\Psi}^{-\infty}_{\sus(2)}(Z;E)[\epsilon/\epsilon^{2}]
\label{td.7}\end{equation}
Thus, on $\cD^{-\infty}_{\sus(2)}(Z;E)[\epsilon/\epsilon^{2}]$, one can
consider the smooth $1$-form
\begin{equation}
 \widetilde{\alpha}(a)=
\frac{1}{2} \widetilde{\Tr}_{\ad} ( a^{-1}* da + da * a^{-1})
\label{efatopi.134}\end{equation}
which restricts to $\alpha$ on
$G^{-\infty}_{\sus(2)}(Z;E)[\epsilon/\epsilon^{2}].$
\begin{lemma}
For $a=a_{0}+\epsilon a_{1}$ and $b=b_{0}+\epsilon b_{1}$ in $\widetilde{\Psi}^{-\infty}_{\sus(2)}(Z;E)[\epsilon/\epsilon^{2}],$
\begin{equation*}
\widetilde{\Tr}_{\ad}( a*b-b*a)= \frac{1}{2\pi i} \int_{\bbR} \Tr_{Z}\left( 
 \frac{\pa a_{0}}{\pa \tau}(\infty,\tau) b_{0}(\infty,\tau) \right) d\tau.
\label{efatopi.183}\end{equation*}
In particular, this trace-defect vanishes if $a,b \in
\Psi^{-\infty}_{\sus(2)}(Z;E)[\epsilon/\epsilon^{2}].$  
\label{trace-d}\end{lemma}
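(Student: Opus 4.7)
The plan is to expand $a*b - b*a$ explicitly using the truncated product \eqref{efatopi.119}, apply $\widetilde{\Tr}_{\ad}$, and show that the defect reduces via integration by parts to a boundary contribution at $t=+\infty$. Since $\widetilde{\Tr}_{\ad}$ only sees the $\epsilon$-coefficient, I only need the $\epsilon$-part of $a*b-b*a$. It decomposes into two ordinary (fibrewise) commutators $[a_0,b_1] + [a_1,b_0]$ plus a Poisson-bracket piece
$$-\frac{1}{2i}\bigl(\pa_t a_0\,\pa_\tau b_0 - \pa_\tau a_0\,\pa_t b_0 - \pa_t b_0\,\pa_\tau a_0 + \pa_\tau b_0\,\pa_t a_0\bigr).$$
Since all factors are valued in $\Psi^{-\infty}(Z;E)$, the fibre trace $\Tr_Z$ kills the two ordinary commutators pointwise in $(t,\tau)$, so they make no contribution.

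For the remaining piece, cyclicity of $\Tr_Z$ on the four Poisson-bracket terms pairs them up and collapses the integral to
$$\widetilde{\Tr}_{\ad}(a*b-b*a) = -\frac{1}{2\pi i}\int_{\bbR^2}\Tr_Z\bigl(\pa_t a_0\,\pa_\tau b_0 - \pa_\tau a_0\,\pa_t b_0\bigr)\,dt\,d\tau.$$
The key observation is then that the integrand equals $\pa_t(a_0\,\pa_\tau b_0) - \pa_\tau(a_0\,\pa_t b_0)$, so Fubini plus integration by parts reduces the double integral to boundary terms. The $\tau$-integration kills the second term because both $a_0$ and $\pa_t b_0$ are Schwartz in $\tau$: for $a_0$ this follows from $a_0(t,\tau) = \int_{-\infty}^{t} \pa_s a_0(s,\tau)\,ds$ together with $\pa_s a_0 \in \cS(\bbR^2;\Psi^{-\infty}(Z;E))$ and $a_0(-\infty,\tau)=0$ in the definition \eqref{td.8}. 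The $t$-integration contributes only the value at $t=+\infty$, again using $a_0(-\infty,\tau)=0$, and yields $\Tr_Z\!\bigl(a_0(\infty,\tau)\,\pa_\tau b_0(\infty,\tau)\bigr)$. A final integration by parts in $\tau$, whose boundary terms vanish since $a_0(\infty,\cdot)$ and $b_0(\infty,\cdot)$ are themselves Schwartz (as $\tau$-integrals of Schwartz derivatives), transfers the $\tau$-derivative onto $a_0$ and produces the asserted formula. For the second clause, if $a,b\in\Psi^{-\infty}_{\sus(2)}(Z;E)[\epsilon/\epsilon^2]$ then $a_0,b_0$ are Schwartz in $(t,\tau)$, so $a_0(\infty,\tau)=0$ and the defect vanishes.

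The main point requiring care is verifying the Schwartz decay in $\tau$ (and its uniformity in $t$) needed to justify the integrations by parts, since the class $\widetilde{\Psi}_{\sus(2)}(Z;E)$ is defined only by Schwartz behavior of $\pa_t a_0$ together with a one-sided limit at $t=-\infty$. This is a routine consequence of writing $a_0$ as an antiderivative of its Schwartz $t$-derivative, but it is the only nontrivial analytic input — everything else reduces to algebraic manipulation of the truncated product and cyclicity of the fibre trace.
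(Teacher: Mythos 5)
Your argument is correct and follows essentially the same route as the paper's: both reduce the trace defect to the double integral over $\bbR^2$ of $\Tr_Z$ applied to the Poisson bracket of $a_0$ and $b_0$, and then integrate by parts to extract the boundary contribution at $t=+\infty$ (using $a_0(-\infty,\tau)=0$ and the Schwartz decay inherited from $\pa_t a_0$). The only cosmetic difference is that the paper writes the integrand directly as the single total derivative $\pa_t\,\Tr_Z\bigl(\pa_\tau a_0\,b_0\bigr)$, whereas you use the decomposition $\pa_t(a_0\,\pa_\tau b_0)-\pa_\tau(a_0\,\pa_t b_0)$ and therefore need one extra integration by parts in $\tau$ at the very end to move the derivative onto $a_0$.
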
 
\begin{proof}
By definition of the truncated $*$-product and using the trace property of
$\Tr_{Z},$
\begin{equation}
\widetilde{\Tr}_{\ad}(a* b- b*a)= -\frac{1}{2\pi i} \int_{\bbR^{2}}\Tr_{Z}
\left(\frac{\pa a_{0}}{\pa t} \frac{\pa b_{0}}{\pa \tau}-
\frac{\pa a_{0}}{\pa \tau}\frac{\pa b_{0}}{\pa t}  \right)dt d\tau.
\label{td.9}\end{equation} 
Integrating by parts the first term on the right,
\begin{equation}
\begin{aligned}
\int_{\bbR^{2}} \Tr_{Z} \left( \frac{\pa a_{0}}{\pa t}
\frac{\pa b_{0}}{\pa \tau}\right)& dt d\tau  =
  -\int_{\bbR^{2}} \Tr_{Z} \left(  \frac{\pa^{2} a_{0}}{\pa \tau \pa t} b_{0} \right) dt d\tau \\
&= -\int_{\bbR^{2}} \frac{\pa}{\pa t}
\left(  \Tr_{Z} \left( \frac{\pa a_{0}}{\pa \tau} b_{0}\right) \right)dt d\tau
+\int_{\bbR^{2}} \Tr_{Z}\left(\frac{\pa a_{0}}{\pa \tau}\frac{\pa b_{0}}{\pa t}
\right) dt d\tau.
\end{aligned}
\label{td.10}\end{equation}
Thus,
\begin{equation}
\begin{aligned}
\Tr_{\ad}( a*b- b*a) & = \frac{1}{2\pi i} \int_{\bbR^{2}}
\frac{\pa}{\pa t} \left(  \Tr_{Z} \left( \frac{\pa a_{0}}{\pa \tau}
b_{0}\right) \right) dt d\tau, \\ 
  &= \frac{1}{2\pi i} \int_{\bbR} \Tr_{Z} \left(  \frac{\pa a_{0}}{\pa
  \tau}(\infty,\tau) b_{0}(\infty,\tau) \right) d\tau.
\end{aligned}
\label{td.11}\end{equation}
\end{proof}

\begin{proposition}\label{efatopi.126} Under the maps on the product 
\begin{equation}
\xymatrix@C=-2pc{
&\cD^{-\infty}_{\sus(2)}(Z;E)[\epsilon /\epsilon^2]&\\
&
\cD^{-\infty}_{\sus(2)}(Z;E)[\epsilon /\epsilon^2]\times
\cD^{-\infty}_{\sus(2)}(Z;E)[\epsilon /\epsilon^2]
\ar[u]^m\ar[dl]^{\pi_L}\ar[dr]_{\pi_R}
\ar[dd]^{\tilde R_{\infty}\times \tilde R_{\infty}}
\\
\cD^{-\infty}_{\sus(2)}(Z;E)[\epsilon /\epsilon^2]&&
\cD^{-\infty}_{\sus(2)}(Z;E)[\epsilon /\epsilon^2]\\
&
(G^{-\infty}_{\sus}(Z,E))^2
}
\label{efatopi.127}\end{equation}
the $1$-form $\tilde\alpha$ in \eqref{efatopi.134} satisfies 
\begin{equation}
m^*\tilde \alpha =\pi_L^*\tilde\alpha +\pi_R^*\tilde\alpha+(\tilde
R_{\infty}\times\tilde R_{\infty})^*\delta,
\label{efatopi.136}\end{equation}
with  
\begin{equation*}
\delta(a,b)=- \frac{1}{4\pi i} \int_{\bbR}\Tr_Z\left(a^{-1}(da)\frac{\pa b}{\pa\tau}b^{-1}
-(db)b^{-1}a^{-1}\frac{\pa a}{\pa\tau}\right)d\tau
\label{efatopi.184}\end{equation*}
on $G^{-\infty}_{\sus}(Z;E)\times G^{-\infty}_{\sus}(Z;E).$
\end{proposition}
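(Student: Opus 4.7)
The plan is to expand $m^{*}\tilde\alpha(a,b)=\tilde\alpha(a*b)$ using the Leibniz rule $d(a*b)=da*b+a*db$ and the inverse formula $(a*b)^{-1}=b^{-1}*a^{-1}$, both valid in the truncated $*$-product. A direct expansion gives
\begin{equation*}
2m^{*}\tilde\alpha=\widetilde\Tr_{\ad}\bigl(b^{-1}*a^{-1}*da*b+b^{-1}*db+da*a^{-1}+a*db*b^{-1}*a^{-1}\bigr),
\end{equation*}
of which the middle two terms already agree with the corresponding pieces of $2\pi_{R}^{*}\tilde\alpha$ and $2\pi_{L}^{*}\tilde\alpha$. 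The remaining two differ from $a^{-1}*da$ and $db*b^{-1}$ (the missing contributions to $\pi_{L}^{*}\tilde\alpha+\pi_{R}^{*}\tilde\alpha$) precisely by two pure $*$-commutators,
\begin{equation*}
b^{-1}*(a^{-1}*da*b)-(a^{-1}*da*b)*b^{-1}\Mand a*(db*b^{-1}*a^{-1})-(db*b^{-1}*a^{-1})*a.
\end{equation*}

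I would then apply the computation behind Lemma~\ref{trace-d} to each of these commutators. That lemma is stated for elements of $\widetilde\Psi^{-\infty}_{\sus(2)}(Z;E)[\epsilon/\epsilon^{2}]$, but its proof is an integration by parts in $t$ of the Poisson-bracket term produced by the truncated $*$-product. For the present inputs, which lie in $\cD^{-\infty}_{\sus(2)}(Z;E)[\epsilon/\epsilon^{2}]$, this integration in principle produces boundary contributions at both $t=-\infty$ and $t=+\infty$; however, on each connected component the leading parts of $a$ and $b$ have fixed limits $s^{j}$, $s^{k}$ at $t=-\infty$, so both $da$ and $db$ vanish there. Only the $t=+\infty$ boundary survives, and since $a_{0}(\infty,\cdot)=\tilde R_{\infty}(a)$ and $b_{0}(\infty,\cdot)=\tilde R_{\infty}(b)$ both lie in $G^{-\infty}_{\sus}(Z;E)$, this automatically exhibits the correction as the pullback of a form on the quotient $G^{-\infty}_{\sus}(Z;E)\times G^{-\infty}_{\sus}(Z;E)$.

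The remainder is explicit calculation: each defect is $\tfrac{1}{2\pi i}\int_{\bbR}\Tr_{Z}\bigl(\tfrac{\pa X_{0}}{\pa\tau}(\infty,\cdot)\,Y_{0}(\infty,\cdot)\bigr)d\tau$ with $(X,Y)$ the pair appearing in that commutator, and cyclic invariance of $\Tr_{Z}$ rearranges each into one of the two terms displayed in the formula for $\delta$. Combined with the factor $\tfrac{1}{2}$ in the definition of $\tilde\alpha$, this produces the asserted prefactor $-\tfrac{1}{4\pi i}$. The main bookkeeping task is tracking signs and operator orderings so that the two defects assemble into the antisymmetric expression for $\delta$; this antisymmetry is also a useful consistency check, being compatible with the sign change of $\Ch_{\odd}$ under inversion, cf.\ Lemma~\ref{efatopi.173}.
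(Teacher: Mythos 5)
Your argument is essentially the paper's own proof: both expand $\widetilde{\Tr}_{\ad}((a*b)^{-1}*d(a*b))$ and $\widetilde{\Tr}_{\ad}(d(a*b)*(a*b)^{-1})$ via the Leibniz rule and $(a*b)^{-1}=b^{-1}*a^{-1}$, recognize the discrepancies with $\pi_L^*\tilde\alpha+\pi_R^*\tilde\alpha$ as the two $*$-commutators you display, evaluate their adiabatic-trace defects by the mechanism of Lemma~\ref{trace-d} as boundary integrals at $t=\infty$, and assemble them (with the factor $\tfrac12$ from \eqref{efatopi.134} and cyclicity of $\Tr_Z$) into $\delta$. The one place you go beyond the paper is in noting that Lemma~\ref{trace-d} is stated for elements vanishing at $t=-\infty$ while $\cD^{-\infty}_{\sus(2)}(Z;E)[\epsilon/\epsilon^2]$-elements tend to $s^j$ there, and in checking that the potential $t=-\infty$ boundary contribution is killed by the factor $da$ (resp.\ $db$) in each commutator; this is a correct and worthwhile clarification rather than a different route.
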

\begin{proof} 
If $a,b\in \cD^{-\infty}_{\sus(2)}(Z;E)[\epsilon/\epsilon^{2}],$ the
trace-defect formula of Lemma~\ref{trace-d} gives
\begin{equation}
\begin{aligned}
\widetilde{\Tr}_{\ad}((a*b)^{-1}*&d(a*b)) =
\widetilde{\Tr}_{\ad} ( b^{-1}*a^{-1}*da*b+ b^{-1}*db ) \\
&= \widetilde{\Tr}_{\ad}( a^{-1}*da)+ \widetilde{\Tr}_{\ad}( b^{-1}*db)  \\
& + \widetilde{\Tr}_{\ad}( b^{-1}*(a^{-1}*da*b)- (a^{-1}*da*b)*b^{-1} )  \\
&= \widetilde{\Tr}_{\ad}( a^{-1}*da)+ \widetilde{\Tr}_{\ad}( b^{-1}*db)  \\
& +\frac{1}{2\pi i}\int_{\bbR}\Tr_{Z}
\left( \frac{\pa b_{0}^{-1}}{\pa \tau}(\infty,\tau) 
(a_{0}^{-1}da_{0} b_{0})(\infty,\tau) \right) d\tau  \\
&=\widetilde{\Tr}_{\ad}( a^{-1}*da)+ \widetilde{\Tr}_{\ad}( b^{-1}*db)  \\
&-\frac{1}{2\pi i}\int_{\bbR}
\Tr_{Z}\left( \frac{\pa b_{0}}{\pa \tau}(\infty,\tau) b_{0}^{-1}(\infty,\tau)
 a_{0}^{-1}(\infty,\tau) da_{0}(\infty,\tau) \right) d\tau.
\end{aligned}
\label{td.12}\end{equation}
Similarly,
\begin{multline}
\widetilde{\Tr}_{\ad}(d(a*b)*(a*b)^{-1})= \widetilde{\Tr}_{\ad}( da*a^{-1}) + \widetilde{\Tr}_{\ad}( 
db* b^{-1})\\
+\frac{1}{2\pi i} \int_{\bbR}
\Tr_{Z} \left( db_{0}(\infty,\tau) b^{-1}(\infty,\tau) a^{-1}(\infty,\tau) 
\frac{\pa a_{0}}{\pa \tau}(\infty,\tau) \right) d\tau.
\label{td.13}\end{multline}
Combining these two computations, the result follows.
\end{proof}

\begin{proposition}\label{efatopi.72} The adiabatic determinant on the
  normal subgroup in \eqref{efatopi.77} induces the determinant line
  bundle, $\cL,$ which is primitive over the quotient and $\tilde\alpha$ in
  \eqref{efatopi.134} defines a connection $\nabla_{\ad}$ on $\cL$ with
  curvature form the $2$-form part of the universal even Chern character
  of \eqref{efatopi.65}.
\end{proposition}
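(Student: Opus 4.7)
The plan is to build $\cL$ as the line bundle associated to the principal $N$-bundle $\tilde R_\infty$ of \eqref{efatopi.77}, with $N:=G^{-\infty}_{\sus(2)}(Z;E)[\epsilon/\epsilon^2]$, via the character ${\det}_{\ad}$; descend $\tilde\alpha$ to a connection; extract primitivity from the multiplicativity statement \eqref{efatopi.136}; and finally match the descended curvature to the $2$-form part of \eqref{efatopi.65}.

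The first step is the quotient construction $\cL=\cD^{-\infty}_{\sus(2)}(Z;E)[\epsilon/\epsilon^2]\times_{{\det}_{\ad}}\bbC$, where $N$ acts on the right by $(\tilde a,\zeta)\cdot n=(\tilde a*n,{\det}_{\ad}(n)^{-1}\zeta)$. For this to descend as a smooth line bundle over $G^{-\infty}_{\sus}(Z;E)$ one must verify the conjugation invariance ${\det}_{\ad}(\tilde g*n*\tilde g^{-1})={\det}_{\ad}(n)$ for all $\tilde g\in\cD^{-\infty}_{\sus(2)}(Z;E)[\epsilon/\epsilon^2]$ and $n\in N$. This follows from the trace-defect formula of Lemma~\ref{trace-d}: along any path $n_s$ in $N$ the leading term of $n_s^{-1}*\dot n_s$ is Schwartz in $t$, so the boundary contribution at $t=\infty$ in \eqref{efatopi.183} vanishes and ${\det}_{\ad}$ is invariant under such conjugations. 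Local smoothness and triviality of $\cL$ then come from local sections of $\tilde R_\infty$, obtained component-by-component by the contraction argument of Lemma~\ref{contractibility.1} (each component of $\cD^{-\infty}_{\sus(2)}(Z;E)[\epsilon/\epsilon^2]$, indexed by the $j$ such that $t\mapsto a(t,\tau)$ ends at $s^j$, being contractible after the affine $\epsilon$-extension).

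The connection $\nabla_{\ad}$ comes directly from $\tilde\alpha$: by definition $\tilde\alpha$ restricts on the fibres of $\tilde R_\infty$ to the Maurer--Cartan form $\alpha=d\log{\det}_{\ad}$ of \eqref{td.3}, so it is a principal connection $1$-form on the $\bbC^*$-bundle underlying $\cL$. Primitivity in the sense of Definition~\ref{efatopi.78} is furnished by the map $[\tilde a,\zeta_a]\otimes[\tilde b,\zeta_b]\longmapsto[\tilde a*\tilde b,\zeta_a\zeta_b]$; well-definedness uses both the multiplicativity of ${\det}_{\ad}$ on $N$ and the conjugation invariance established above (to rewrite $(n_1\tilde a)*(n_2\tilde b)=n_1(\tilde an_2\tilde a^{-1})(\tilde a*\tilde b)$), while the associativity \eqref{efatopi.68} is automatic from associativity of $*$. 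The compatibility of $\nabla_{\ad}$ with the primitive structure is the content of \eqref{efatopi.136}: the basic correction $(\tilde R_\infty\times\tilde R_\infty)^*\delta$ is precisely the $1$-form discrepancy between the tensor-product connection and $m^*\nabla_{\ad}$ after descent to $G^{-\infty}_{\sus}(Z;E)\times G^{-\infty}_{\sus}(Z;E)$.

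The main obstacle I anticipate is the final identification of the descended $d\tilde\alpha$ with the $2$-form component of $\Ch_{\even}$. The plan is to compute
\[
d\tilde\alpha=-\tfrac{1}{2}\widetilde{\Tr}_{\ad}\bigl(a^{-1}*da*a^{-1}*da+da*a^{-1}*da*a^{-1}\bigr)
\]
modulo trace defects of Lemma~\ref{trace-d}, extract the $\epsilon$-coefficient using the truncated product \eqref{efatopi.119} and the inverse formula \eqref{efatopi.124}, and integrate by parts in the $(t,\tau)$-plane. The resulting form is first shown to be basic for $\tilde R_\infty$ (using once more that leading terms of $a^{-1}*da$ are Schwartz in $t$ for $a\in N$), so it descends to $G^{-\infty}_{\sus}(Z;E)$, and then at $t=\infty$ reduces to an integral over $\bbR_\tau$ of $\Tr_Z$ of a cubic expression in $a(\infty,\tau)^{-1}d\,a(\infty,\tau)$ and $a(\infty,\tau)^{-1}\partial_\tau a(\infty,\tau)$. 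Careful bookkeeping of the Poisson-bracket contributions in \eqref{efatopi.119} should produce exactly the $s(1-s)$ weight appearing in the integrand of \eqref{efatopi.65}, completing the identification of the curvature of $\nabla_{\ad}$ with the $2$-form part of the universal even Chern character.
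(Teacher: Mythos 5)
Your proposal follows essentially the same route as the paper: $\cL$ is the line bundle associated to the principal bundle $\tilde R_\infty$ of \eqref{efatopi.77} via the character $\det_{\ad}$, the connection descends from $d-\tilde\alpha$ because the correction term $\delta$ in \eqref{efatopi.136} vanishes when one factor lies in the normal subgroup, and the curvature is $d\tilde\alpha$, computed as a commutator whose trace defect (Lemma~\ref{trace-d}) localizes to a boundary integral at $t=\infty$ matching $(\Ch_{\even})_{[2]}$. Your treatment of the primitivity isomorphism and of conjugation invariance of $\det_{\ad}$ is more explicit than the paper's, and the deferred ``bookkeeping'' in your last step is exactly the computation \eqref{curv.1}--\eqref{curv.5}, which lands on the constant $\tfrac{1}{2(2\pi i)^{2}}$ of \eqref{curv.4} rather than literally on an $s(1-s)$ weight.
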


\begin{proof} The form $\tilde\alpha$ in \eqref{efatopi.134} restricts to
 $\alpha$ in \eqref{td.3} on $G^{-\infty}_{\sus(2)}(Z;E)[\epsilon
  /\epsilon^2].$ The latter is the differential of the logarithm of
  $\det_{\ad}.$ As a special case of Proposition~\ref{efatopi.126} above,
the first factor may be restricted to $G^{-\infty}_{\sus(2)}(Z;E),$
and then $\delta$ in \eqref{efatopi.136} vanishes since $a\equiv0.$ This
shows that as a connection on the trivial bundle over
  $\cD^{-\infty}_{\sus(2)}(Z;E)[\epsilon /\epsilon^2],$ $d-\tilde\alpha$ is
  invariant under the left action of $G^{-\infty}_{\sus(2)}(Z;E)[\epsilon
    /\epsilon^2],$ acting through the adiabatic determinant on the
  fibres. Thus $d-\tilde\alpha$ projects to a connection $\nabla_{\ad}$ on
  the determinant line bundle over $G^{-\infty}_{\sus}(Z;E)$ defined as the
  quotient by this action, i\@.e\@.~as the line bundle induced by
  $\det_{\ad}$ as a representation of the structure group.

To compute the curvature we simply need to compute the differential of
$\tilde \alpha.$  Using the trace-defect formula of Lemma~\ref{trace-d},
\begin{equation}
\begin{aligned}
d \widetilde{\Tr}_{\ad}(a^{-1}*da) &= -\widetilde{\Tr}_{\ad}(a^{-1}*da*a^{-1}*da)= -\frac{1}{2} \widetilde{\Tr}_{\ad}( [a^{-1}*da, a^{-1}*da]), \\
&= -\frac{1}{4\pi i} \int_{\bbR} \Tr_{Z} \left(   
       \left(\frac{\pa}{\pa \tau} (\sigma^{-1}d\sigma)\right) \sigma^{-1}d\sigma    \right) d\tau, \; \mbox{with} \; \sigma=a_{0}(\infty,\tau), \\
&= \frac{1}{4\pi i} \int_{\bbR} \Tr_{Z} \left(
   \sigma^{-1}\frac{\pa \sigma}{\pa \tau}(\sigma^{-1}d\sigma)^{2} 
    - \sigma^{-1}d\left( \frac{\pa \sigma}{\pa \tau}\right)
 \sigma^{-1}d\sigma    
  \right)d\tau.  
\end{aligned}
\label{curv.1}\end{equation}
Similarly, we compute that
\begin{equation}
\begin{aligned}
d\widetilde{\Tr}_{\ad}(da*a^{-1}) &=
\widetilde{\Tr}_{\ad}(da*a^{-1}*da*a^{-1})=
\frac{1}{2} \widetilde{\Tr}_{\ad}( [da*a^{-1}, da*a^{-1}]), \\
&= \frac{1}{4\pi i} \int_{\bbR} \Tr_{Z} \left(   
\left(\frac{\pa}{\pa \tau} (d\sigma\sigma^{-1})\right) d\sigma\sigma^{-1}
\right) d\tau, \\
&= -\frac{1}{4\pi i} \int_{\bbR} \Tr_{Z} \left(
   d\sigma\sigma^{-1}\frac{\pa \sigma}{\pa \tau}\sigma^{-1}d\sigma \sigma^{-1}
    - d\left( \frac{\pa \sigma}{\pa \tau}\right)
 \sigma^{-1}d\sigma \sigma^{-1}    
  \right)d\tau.
\end{aligned}
\label{curv.3}\end{equation}
Recall that the $2$-form part of the universal even Chern character on
$G^{-\infty}_{\sus}(Z;E)$ is given by (\cf formula (3.7) in \cite{fipomb})
\begin{equation}
  (\Ch_{\even})_{[2]}= \frac{1}{2(2\pi i)^{2}} \int_{\bbR}
\Tr_{Z}\left( \sigma^{-1}\frac{\pa \sigma}{\pa \tau}(\sigma^{-1}d\sigma)^{2}
\right) d\tau.
\label{curv.4}\end{equation}
Thus, combining \eqref{curv.1} and \eqref{curv.3},
\begin{equation}
d\tilde{\alpha}= \frac{1}{4\pi i} \int_{\bbR} \Tr_{Z} \left(
   \sigma^{-1}\frac{\pa \sigma}{\pa \tau}(\sigma^{-1}d\sigma)^{2}     
  \right)d\tau = 2\pi i \tilde{R}_{\infty}^{*}( (\Ch_{\even})_{[2]}),
\label{curv.5}\end{equation}
that is,
\begin{equation}
\frac{i}{2\pi} \nabla_{\ad}^{2}=\tilde{R}_{\infty}^{*}( (\Ch_{\even})_{[2]}). 
\label{curv.6}\end{equation}
\end{proof}

Next, this construction of the determinant bundle is extended to the
geometric case. The sequence, \eqref{efatopi.77}, being natural, extends to
give smooth bundles over the fibres of \eqref{efatopi.1}: 
\begin{equation}
\xymatrix{
G^{-\infty}_{\sus(2)}(\phi;E)[\epsilon /\epsilon^2]\ar@{^(->}[r]&
\cD^{-\infty}_{\sus(2)}(\phi;E)[\epsilon /\epsilon^2]\ar[r]^-{\tilde R_{\infty}}&
G^{-\infty}_{\sus}(\phi;E).
}
\label{efatopi.180}\end{equation}
Furthermore, using the connection chosen earlier, the form $\tilde\alpha$
in \eqref{efatopi.134} can be replaced by $\tilde\alpha _{\phi}$ by
substituting $\nabla^{\phi,E}$ for $d$ throughout. The resulting $1$-form is
well-defined on $\cD^{-\infty}_{\sus(2)}(\phi;E).$ Moreover the
proof of proposition~\ref{efatopi.126} only depends on the derivation property of $d$ so
extend directly to $\tilde\alpha_{\phi}.$ In particular \eqref{efatopi.136}
carries over to the fibre products. This leads to the following
geometric version of Proposition~\ref{efatopi.72}.

\begin{proposition}\label{efatopi.182} The adiabatic determinant on the
fibres of the structure bundle in \eqref{efatopi.77} induces the determinant line
  bundle, $\cL,$ over $G^{-\infty}_{\sus}(\phi;E);$ the 1-form
  $\tilde\alpha_{\phi}$ defines a connection
  $\nabla_{\phi}$ on $\cL$ with curvature the $2$-form part of the even
  Chern character on $G^{-\infty}_{\sus}(\phi;E).$
\end{proposition}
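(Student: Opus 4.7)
The plan is to mirror the proof of Proposition~\ref{efatopi.72} verbatim in the geometric setting, taking advantage of the observation already made in the paragraph preceding the statement: all the identities used in \S\ref{Det-line} rest only on the Leibniz rule for the differential and on the cyclicity (modulo the trace defect of Lemma~\ref{trace-d}) of $\widetilde\Tr_{\ad}$, both of which transfer to the covariant differential $\nabla$ induced on $\cD^{-\infty}_{\sus(2)}(\phi;E)[\epsilon/\epsilon^2]$ by the chosen connections on $\phi$ and $E$.

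First I construct $\cL$. The short exact sequence \eqref{efatopi.180} realizes $G^{-\infty}_{\sus}(\phi;E)$ as the fibrewise quotient of $\cD^{-\infty}_{\sus(2)}(\phi;E)[\epsilon/\epsilon^2]$ by the left action of the normal subgroup $G^{-\infty}_{\sus(2)}(\phi;E)[\epsilon/\epsilon^2]$, and $\det_{\ad}$ is a fibrewise character of this subgroup. I define $\cL$ as the line bundle associated, fibre by fibre, to this principal action through $\det_{\ad}$. The primitivity of $\cL$ follows from the fibrewise analog of Proposition~\ref{efatopi.126}: the multiplicativity correction $\delta$ is supported on $(G^{-\infty}_{\sus}(\phi;E))^{[2]}$, so exponentiating $\delta$ gives precisely the isomorphism $\pi_L^*\cL\otimes\pi_R^*\cL\overset{\simeq}\longrightarrow m^*\cL$ required by Definition~\ref{efatopi.78}, while associativity \eqref{efatopi.68} is a straightforward cocycle check from the same formula applied on $(G^{-\infty}_{\sus}(\phi;E))^{[3]}$.

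Next I produce $\nabla_\phi$. Replacing $d$ with $\nabla$ in \eqref{efatopi.134} yields a smooth $1$-form $\tilde\alpha_\phi$ on $\cD^{-\infty}_{\sus(2)}(\phi;E)[\epsilon/\epsilon^2]$. The trace-defect computation of Lemma~\ref{trace-d} is intrinsic to each fibre, and so extends fibrewise unchanged; the argument of Proposition~\ref{efatopi.126} then gives
\begin{equation*}
m^*\tilde\alpha_\phi=\pi_L^*\tilde\alpha_\phi+\pi_R^*\tilde\alpha_\phi+(\tilde R_\infty\times\tilde R_\infty)^*\delta_\phi
\end{equation*}
on $\cD^{-\infty}_{\sus(2)}(\phi;E)[\epsilon/\epsilon^2]\times_Y\cD^{-\infty}_{\sus(2)}(\phi;E)[\epsilon/\epsilon^2]$. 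Restricting the first factor to $G^{-\infty}_{\sus(2)}(\phi;E)[\epsilon/\epsilon^2]$ sends $\tilde R_\infty$ of that factor to the identity, making $\delta_\phi$ vanish, so $\tilde\alpha_\phi$ is invariant under the left action of the normal subgroup and differs from its translate by exactly $d\log\det_{\ad}$. Hence $d-\tilde\alpha_\phi$ descends from the trivial bundle on $\cD^{-\infty}_{\sus(2)}(\phi;E)[\epsilon/\epsilon^2]$ to a connection $\nabla_\phi$ on $\cL$.

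Finally I compute the curvature by transcribing \eqref{curv.1}--\eqref{curv.6} with $d$ replaced by $\nabla$. The Bianchi identity $\nabla\omega=0$, together with $\nabla^2=[\omega,\cdot]$, makes the analog of \eqref{curv.1} produce the same integrand $\sigma^{-1}(\partial\sigma/\partial\tau)(\sigma^{-1}\nabla\sigma)^2$ at $\tau=\infty,$ plus terms that are pointwise commutators and terms that are $\nabla$-exact boundary contributions; the commutators are annihilated by $\widetilde\Tr_{\ad}$ after applying Lemma~\ref{trace-d} because $\omega$ is independent of the suspension variable $\tau,$ so the only surviving trace defect lives at $\tau=\infty$ and cancels between the $a^{-1}*\nabla a$ and $\nabla a * a^{-1}$ halves of $\tilde\alpha_\phi.$ The outcome is $d\tilde\alpha_\phi=2\pi i\,\tilde R_\infty^*((\Ch_{\even})_{[2]}),$ which is basic for the quotient to $\cL$ and so identifies the curvature of $\nabla_\phi$ with $2\pi i(\Ch_{\even})_{[2]}$ on $G^{-\infty}_{\sus}(\phi;E)$.

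The main obstacle is precisely the bookkeeping in the last paragraph: verifying that the $\omega$-dependent terms generated by $\nabla^2=[\omega,\cdot]$ really are killed by the adiabatic trace defect. This is where the $\tau$-independence of $\omega$ and the symmetric form $\tfrac12(a^{-1}*\nabla a+\nabla a*a^{-1})$ chosen for $\tilde\alpha_\phi$ are both essential — without either, the trace-defect boundary contributions at $\tau=\infty$ would fail to cancel and the curvature would acquire a spurious term beyond $(\Ch_{\even})_{[2]}.$
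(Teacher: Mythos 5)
Your proposal is correct and follows essentially the same route as the paper, which offers no separate proof of this proposition beyond the preceding remark that the sequence \eqref{efatopi.77} and the form $\tilde\alpha$ extend naturally to bundles over the fibres of the fibration and that the formul\ae\ of \S\ref{Det-line} depend only on the derivation property of $d$ and so carry over when $d$ is replaced by $\nabla$. Your elaboration \mhy\ the fibrewise associated-bundle construction of $\cL$, descent of $\tilde\alpha_{\phi}$ via the vanishing of the multiplicativity defect on the normal subgroup, and the curvature computation with $\nabla^{2}=[\omega,\cdot]$ handled through the trace defect of Lemma~\ref{trace-d} \mhy\ is exactly the intended argument, carried out in more detail than the paper provides.
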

\begin{proof}
What is slightly different in the geometric case is the computation of the
curvature of $\nabla_{\phi}$.  Taking into account \eqref{gcf.5a}, the
analogue of \eqref{curv.1} and \eqref{curv.3} is 
\begin{equation}
 d\tilde\alpha_{\phi}= \frac{1}{4\pi i} \int_{\bbR} 
  \Tr_{Z}\left(\sigma^{-1}\frac{\pa \sigma}{\pa \tau} (\sigma^{-1}\nabla^{\phi,E}\sigma)^{2}   \right) d\tau + \frac{1}{2}\Tr_{\ad}\left( 
a^{-1}*([\omega,a])+([\omega,a])*a^{-1} \right)
\label{gcurv.1}\end{equation}
where $\sigma=a_{0}(\infty,\tau)$.  To compute the second term, we use the 
identity $a^{-1}*(a\omega)= (\omega a)*a^{-1}=\omega$ to rewrite it as
\begin{multline}
\frac{1}{2}\Tr_{\ad}\left( 
a^{-1}*([\omega,a])+([\omega,a])*a^{-1} \right)=  \\
\frac{1}{2}\Tr_{\ad}\left( a^{-1}*(\omega a)- (\omega a)*a^{-1}\right)
+\frac{1}{2} \Tr_{\ad}\left( a^{-1}*(a\omega)- (a\omega)*a^{-1}\right).
\label{gcurv.2}\end{multline}
Using the trace-defect formula of lemma~\ref{trace-d}, this gives
\begin{equation}
\begin{aligned}
d\tilde\alpha_{\phi}&= \frac{1}{4\pi i} \int_{\bbR} 
\Tr_{Z}\left( \sigma^{-1}\frac{\pa \sigma}{\pa \tau} \left(
   (\sigma^{-1}\nabla^{\phi,E}\sigma)^{2}- \sigma^{-1}\omega\sigma-\omega\right)   \right) d\tau \\
 &= 2\pi i \tilde{R}_{\infty}^{*}( (\Ch_{\even}(\nabla^{\phi,E}))_{[2]}),
\end{aligned}
\label{gcurv.3}\end{equation}
from which the result follows.
\end{proof}

\section{The K-theory gerbe}\label{K-gerbe}

First we consider the universal K-theory gerbe, i\@.e\@.~the gerbe over the
classifying space $G^{-\infty}(Z;E)$ for odd K-theory. Such a gerbe was
originally introduced by Carey and Mickelsson \cite{Carey-Mickelsson1}, 
\cite{Carey-Mickelsson2} over a slightly different classifying space for
odd K-theory, namely the space of unitary operators which are perturbations
of the identity by operators of trace class.
We propose a different construction of the universal K-theory gerbe using the
determinant line bundle of proposition~\ref{efatopi.72}. 

Recall that the
delooping sequence \eqref{efatopi.6} for a single manifold 
\begin{equation}
\xymatrix{
\cL\ar[d]^{\pi}\\
G^{-\infty}_{\sus}(Z;E)\ar[r]&
\tilde G^{-\infty}_{\sus}(Z;E)\ar[d]^{R_{\infty}}\\
&G^{-\infty}(Z;E)}
\label{efatopi.137}\end{equation}
is a classifying sequence for K-theory, the normal subgroup is classifying
for even K-theory, the central group is contractible and the quotient is
classifying for odd K-theory. Moreover, in the preceding section, we have
constructed the smooth primitive determinant line bundle over
$G^{-\infty}_{\sus}(Z;E)$ with connection $\nabla_{\ad}$ given in
Proposition~\ref{efatopi.72}. This induces the K-theory gerbe over the
classifying space, as a line bundle over the fibre product of two copies of
the fibration $R_{\infty}:$ 
\begin{equation}
\xymatrix{
&\tilde m^*\cL\ar[d]^{\pi}
&
\cL\ar[d]^{\pi}
\\
\tilde G^{-\infty}_{\sus}(Z;E)\ar[dr]_{R_{\infty}}
&
(\tilde G^{-\infty}_{\sus}(Z;E))^{[2]}\ar[d]^{(R_{\infty})^{[2]}}
\ar[d]\ar@<2pt>[l]^-{\pi_R}\ar@<-2pt>[l]_-{\pi_L}
\ar[r]^-{\tilde m}
&
G^{-\infty}_{\sus}(Z;E)
\\
&G^{-\infty}(Z;E).
}
\label{efatopi.138}\end{equation}
Here $\tilde m:(\tilde G^{-\infty}_{\sus}(Z;E))^{[2]}\longrightarrow
G^{-\infty}_{\sus}(Z;E)$ is the fibre-shift map $\tilde m(a,b)=ab^{-1},$ where
$R_{\infty}(a)=R_{\infty}(b),$ by definition of the fibre product, so
$\tilde m(a,b)\in G^{-\infty}_{\sus}(Z;E)$ by the exactness of
\eqref{efatopi.137}, as indicated.

\begin{theorem}\label{efatopi.139} There is a
  connection $\tilde\nabla_{\ad}$ on $\tilde m^*\cL$ with curvature
\begin{equation}
\frac{i}{2\pi}F_{\tilde\nabla_{\ad}}=\pi^*_L\tilde\eta_2-\pi^*_R\tilde\eta_2
\Mon (\tilde G^{-\infty}_{\sus}(Z;E))^{[2]}
\label{efatopi.150}\end{equation}
where the B-field, $\tilde\eta_2,$ is the 2-form part of the eta form in
\eqref{efatopi.20} which has basic differential the 3-form part of the odd
Chern character on $G^{-\infty}(Z;E),$ as shown by \eqref{efatopi.22}.
\end{theorem}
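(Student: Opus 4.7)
The plan is to construct $\tilde\nabla_{\ad}$ as a 1-form correction of the pullback of the previously-constructed connection $\nabla_{\ad}$ on $\cL$ under the fibre-shift $\tilde m$. Concretely, I take $\tilde\nabla_{\ad} = \tilde m^{*}\nabla_{\ad} - 2\pi i\,\Delta_{1}$, where $\Delta_{1}$ is an explicit transgression 1-form on $(\tilde G^{-\infty}_{\sus}(Z;E))^{[2]}$ produced below. By Proposition~\ref{efatopi.72}, the curvature of $\tilde m^{*}\nabla_{\ad}$ is $2\pi i\,\tilde m^{*}(\Ch_{\even})_{[2]}$, so the task reduces to exhibiting an identity of the form
\begin{equation*}
\tilde m^{*}(\Ch_{\even})_{[2]} \;=\; \pi_{L}^{*}\tilde\eta_{2}-\pi_{R}^{*}\tilde\eta_{2} + d\Delta_{1}\qquad\text{on }(\tilde G^{-\infty}_{\sus}(Z;E))^{[2]}.
\end{equation*}

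To obtain this, first observe that by construction of the fibre product, $\tilde m(a,b)=ab^{-1}$ lies in the kernel subgroup $G^{-\infty}_{\sus}(Z;E)$, so $\tilde m^{*}\Ch_{\even}=\tilde m^{*}\tilde\eta$ by the restriction statement in Proposition~\ref{efatopi.21}. Next, factor $\tilde m = m\circ(\operatorname{id}\times J)$, where $J(b)=b^{-1}$, and apply the multiplicativity formula \eqref{efatopi.149} to the pair $(a,b^{-1})$. The crucial simplification on the fibre product is that $R_{\infty}(a)=R_{\infty}(b)$, so the boundary correction $(R_{\infty}\times R_{\infty})^{*}\delta_{\even}$ is evaluated on the product diagonal $\{(g,g^{-1})\}\subset G^{-\infty}(Z;E)\times G^{-\infty}(Z;E)$, where Proposition~\ref{efatopi.140} guarantees $\delta_{\even}$ vanishes. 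Combined with the fact that $\tilde\eta$ changes sign under inversion modulo an exact form (the analog of Lemma~\ref{efatopi.173} for the extended eta form), this yields
\begin{equation*}
\tilde m^{*}\tilde\eta \;=\; \pi_{L}^{*}\tilde\eta - \pi_{R}^{*}\tilde\eta + d\Delta
\end{equation*}
on the fibre product, for some smooth form $\Delta$ whose 1-form part we call $\Delta_{1}$. Extracting the degree-2 component gives exactly the identity displayed above, and hence $\tilde\nabla_{\ad}$ has the curvature stated in \eqref{efatopi.150}. The final assertion in the theorem, that $d\tilde\eta_{2}$ is the basic 3-form part of $\Ch_{\odd}$, is immediate from \eqref{efatopi.22} by taking the degree-3 component, and it is consistent with the curvature formula because $\pi_{L}^{*}-\pi_{R}^{*}$ annihilates pull-backs from the quotient $G^{-\infty}(Z;E)$.

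The main obstacle I foresee is establishing the inversion behavior of $\tilde\eta$ on the half-open loop group $\tilde G^{-\infty}_{\sus}(Z;E)$, since Lemma~\ref{efatopi.173} is stated for the closed group $G^{-\infty}$. Under $J:\tilde g\mapsto\tilde g^{-1}$, the fibre-integral $\tilde p_{*}(\tEv^{*}\Ch_{\odd})$ picks up boundary behavior at $\tau=\infty$ coming from the non-vanishing limit $R_{\infty}(\tilde g)$; one must check that the resulting discrepancy between $J^{*}\tilde\eta$ and $-\tilde\eta$ is a sum of an exact form and a term pulled back from $G^{-\infty}(Z;E)$ via $R_{\infty}$, and moreover that this latter basic term can be absorbed into $\Delta$ without affecting the final identity (since the image of the fibre product under $R_{\infty}$ is the identity element). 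This computation is essentially parallel to the argument in the proof of Proposition~\ref{efatopi.21} around \eqref{efatopi.33}--\eqref{efatopi.34}, localized at $\tau=\infty$ rather than integrated across $\bbR$, and should be carried out by the same splitting $\tEv^{*}\Ch_{\odd}=A(\tau)+d\tau\wedge B(\tau)$ together with the symmetry $I^{*}\Ch_{\odd}=-\Ch_{\odd}$ from Lemma~\ref{efatopi.173}.
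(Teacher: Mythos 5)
Your argument is essentially the paper's proof: pull back $\nabla_{\ad}$ under $\tilde m$, apply the multiplicativity formula \eqref{efatopi.149} composed with inversion in the second factor, observe that the $(R_{\infty}\times R_{\infty})^*\delta_{\even}$ term dies on the fibre product because it is evaluated on the product diagonal $\{(g,g^{-1})\}$ where $\delta_{\even}$ vanishes by Proposition~\ref{efatopi.140}, and absorb the $1$-form part of the resulting transgression into the connection, with the final assertion read off from \eqref{efatopi.22}. The only quibble is that the ``obstacle'' you flag dissolves immediately \mhy\ since $\tEv$ intertwines inversion on $\tilde G^{-\infty}_{\sus}(Z;E)$ with inversion on $G^{-\infty}(Z;E)$ and $I^*\Ch_{\odd}=-\Ch_{\odd}$ exactly, one gets $J^*\tilde\eta=-\tilde\eta$ on the nose with no boundary or exact corrections \mhy\ while the fallback you offer is misstated: on the fibre product it is $R_{\infty}\circ\tilde m$, not $R_{\infty}\circ\pi_R$, that is constant at the identity.
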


\begin{proof} The connection $\nabla_{\ad}$ on $\cL$ as a bundle over
  $G^{-\infty}_{\sus}(Z;E)$ given by Proposition~\ref{efatopi.72} pulls
  back to a connection $\tilde m^*\nabla_{\ad}$ on $\tilde m^*\cL.$ The
  curvature is just the pull-back of the curvature on
  $G^{-\infty}_{\sus}(Z;E)$ and again by Proposition~\ref{efatopi.72} this
  is the 2-form part of the Chern character. By
  Proposition~\ref{efatopi.21}, the 2-form part of the eta form on $\tilde
  G^{-\infty}_{\sus}(Z;E)$ pulls back under the product map as in
  \eqref{efatopi.149}. To apply this result here we need to invert the
  right factor, to change from $m$ to $\tilde m,$ which also has the effect of
  changing the sign of the eta form from that factor leading to
\begin{equation}
\tilde m^*\tilde\eta=\pi_L^*\tilde\eta-\pi_R^*\tilde\eta+d(\tilde\delta' _{\odd})
+(R_{\infty}\times R_{\infty})^*\delta' _{\even}
\label{efatopi.152}\end{equation}
where the primes indicate that the forms are first pulled back under
inversion in the second variable. Now, restricting \eqref{efatopi.152} to
the fibre diagonal gives
\begin{equation}
\tilde m^*\Ch_{\even}=\pi_L^*\tilde\eta-\pi_R^*\tilde\eta+d(\tilde\delta' _{\odd})
\label{efatopi.151}\end{equation}
since the last term now factors through the constant map to the identity.
This corresponds to the middle row of \eqref{efatopi.138}. In
particular, if the connection is modified by the 1-form part of $\delta
'_{\odd}$
\begin{equation}
\tilde\nabla_{\ad}=\tilde m^*\nabla_{\ad}+2\pi i(\tilde\delta '_{\odd})_1
\label{efatopi.153}\end{equation}
then it has curvature as claimed 
\begin{equation}
\frac{i}{2\pi}(\tilde\nabla_{\ad})^2=\tilde m^*(\Ch_{\even})_2- d(\delta'_{\odd})_{1}=
\pi_L^*\tilde\eta_2-\pi_R^*\tilde\eta_2 
\label{efatopi.154}\end{equation}
which is precisely the statement that $\tilde\eta_2$ is a B-field for the
gerbe. The curving of the gerbe is then the basic form of which the
differential of the B-field is the pull-back and from \eqref{efatopi.22} 
\begin{equation}
d\tilde\eta_2=R_{\infty}^*(\Ch_{\odd})_3.
\label{efatopi.155}\end{equation}
\end{proof}

The bundle gerbe \eqref{efatopi.138} with connection given by 
theorem~\ref{efatopi.139} is universal in the sense that given an
odd K-theory class $[g]\in K^{1}(Y)$ represented by a smooth map
$g:Y\to G^{-\infty}(Z;E)$, the pull-back of \eqref{efatopi.138} to
$Y$ by $g$ gives a bundle gerbe (with connection) on $Y$ whose
curving 3-form is given by $g^{*}\Ch_{\odd}$ (\cf Theorem 5.1 in
\cite{Carey-Wang2006}).

Since 
\begin{equation}
    (\Ch_{\odd})_{3}= \frac{1}{6(2\pi i)^{2}} \Tr( (\sigma^{-1}d\sigma)^{3})
\label{basic.1}\end{equation}
is the image in $H^{3}(G^{-\infty}(Z;E);\bbC)$ of the generator of
$H^{3}(G^{-\infty}(Z;E);\bbZ)\cong \bbZ$, we also note that the 
bundle gerbe \eqref{efatopi.138} is basic for the group 
$G^{-\infty}(Z;E)$.  By considering an $n$-dimensional subspace of
$L^{2}(Z;E)$ with norm defined by a choice of metric on $Z$ and of Hermitian
metric on $E$, we get a natural inclusion $\U(n)\subset G^{-\infty}(Z;E)$.  Pulling back \eqref{efatopi.138} to $\U(n)$ via this inclusion gives the
basic bundle gerbe $\U(n)$.  This is a `smooth' construction in any
reasonable sense although it is infinite dimensional in nature.  

Infinite dimensional constructions of the basic bundle gerbe of a Lie group
first appeared in the book of Brylinski \cite{Brylinski} and later
in \cite{Brylinski-McLaughlin}.  The tautological bundle gerbe of Murray
\cite{Murray1} for $2$-connected manifolds also provides such a construction
for simply connected Lie groups (see also \cite{Carey-Murray-Wang}).  More
recently, finite dimensional constructions of the basic gerbe were
obtained by Gawedzki and Reis \cite{Gawedzki-Reis2002} for $\SU(n)$
and shortly after by Meinrenken \cite{Meinrenken2003} for simple simply 
connected Lie groups.  The construction of Meinrenken was subsequently
generalized to non-simply connected Lie groups in \cite{Gawedzki-Reis2004}
and \cite{Murray-Stevenson2008}.

\section{Geometric Gerbe for an odd elliptic family}\label{Ell-gerbe}

As pointed out in \cite{Carey-Mickelsson-Murray1997}, gerbes are intimately
related to index theory.  In our case, we have the following construction
of the index gerbe (with connection) associated to a family of self-adjoint elliptic pseudodifferential operators and more generally to  a product-type family
of fully elliptic operators.

\begin{theorem}\label{efatopi.60} Let $A\in\Psi^{1}(M/Y;E)$ be a self-adjoint
  elliptic family as in Section~\ref{EtaFam}, or a product-type fully elliptic 
  family $A(t)\in\Psi^{m,l}_{\psus}(M/Y;E)$ then the determinant bundle
  induces a bundle-gerbe 
\begin{equation}
\xymatrix{
&S^*\cL\ar[d]^{\pi}
&
\cL\ar[d]^{\pi}
\\
\cA\ar[dr]_{p}
&
\cA^{[2]}\ar[d]^{p^{[2]}}
\ar[d]\ar@<2pt>[l]^-{\pi_R}\ar@<-2pt>[l]_-{\pi_L}
\ar[r]^-S
&
G^{-\infty}_{\sus}(\phi;E)
\\
&Y.
}
\label{efatopi.61}\end{equation}
The connection on $S^*\cL$ 
\begin{equation}
\nabla_{\cA}=S^*\nabla+\gamma ,\ \gamma=2\pi i(\tilde\delta_{\cA})_1,
\label{efatopi.62}\end{equation}
given by the $1$-form part of the form in \eqref{efatopi.178},
is primitive in the sense that the curvature on $\cA^{[2]}$ splits 
\begin{equation}
\frac{i}{2\pi}(\nabla_{\cA})^2=\pi_L^*\eta _{\cA,2}-\pi_R^*\eta_{\cA,2}
\label{efatopi.63}\end{equation}
showing that $\eta_{\cA,2}$ is a B-field and that the gerbe has curving
3-form  
\begin{equation}
d\eta_{\cA,2}=p^*\Ch_{\cA,3}
\label{efatopi.64}\end{equation}
the 3-form part of the Chern character of the index bundle of the family.
\end{theorem}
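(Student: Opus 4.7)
The plan is to reduce Theorem~\ref{efatopi.60} to the universal determinant-line-bundle construction of Proposition~\ref{efatopi.182} together with the multiplicativity formula of Proposition~\ref{efatopi.176} and the basic-differential formula of Proposition~\ref{gef.10}. Indeed, the statement is the geometric analog of Theorem~\ref{efatopi.139}: once the fibrewise adiabatic determinant bundle $\cL$ over $G^{-\infty}_{\sus}(\phi;E)$ is in hand (as a primitive line bundle with canonical connection), the construction on $\cA^{[2]}$ is obtained by pullback along the fibre-shift map $S(A',A)=A'A^{-1}$, i.e.\ the map $\tilde m$ of Proposition~\ref{efatopi.176}, with the primitivity and curving properties bootstrapped from the ones already established on the structure bundle.

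First I would pull the primitive line bundle $(\cL,\nabla)$ back via $S$ to obtain $S^*\cL$ on $\cA^{[2]}$, and by Proposition~\ref{efatopi.182} its curvature is $S^*(\Ch_{\even})_{[2]}$. Next I would extract the 2-form component of the identity of Proposition~\ref{efatopi.176}, namely
\begin{equation*}
S^*(\Ch_{\even})_{[2]}=\pi_S^*\eta_{\cA,2}-\pi_F^*\eta_{\cA,2}+d(\tilde\delta_{\cA})_1,
\end{equation*}
so that modifying the pulled-back connection by $\gamma=(\tilde\delta_{\cA})_1$ as in \eqref{efatopi.62} absorbs the exact term and produces a connection $\nabla_{\cA}=S^*\nabla+\gamma$ whose curvature is exactly $\pi_L^*\eta_{\cA,2}-\pi_R^*\eta_{\cA,2}$, giving \eqref{efatopi.63} and identifying $\eta_{\cA,2}$ as the B-field.

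Then I would verify the bundle-gerbe structure on the triple fibre product $\cA^{[3]}$. Associativity of the tensor product of the three pairwise pulled-back lines is inherited from the primitivity of $\cL$ on $G^{-\infty}_{\sus}(\phi;E)$ established in Proposition~\ref{efatopi.182}, since the fibre-shift map $S$ turns the cocycle for $\cA^{[3]}\to Y$ into the group-multiplication cocycle for which $\cL$ is primitive. Compatibility of the modified connection $\nabla_{\cA}$ with this cocycle reduces, on the 1-form level, to the natural pullback of Proposition~\ref{efatopi.176} under the three face maps $\cA^{[3]}\to \cA^{[2]}$, so the correction $\gamma$ behaves tensorially.

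Finally, for the curving statement \eqref{efatopi.64} I would take the 3-form part of the formula $d\eta_{\cA}=\pi_{\cA}^*\gamma_A$ from Proposition~\ref{gef.10}, giving $d\eta_{\cA,2}=p^*(\gamma_A)_3$, and invoke Theorem~\ref{efatopi.4} to identify $(\gamma_A)_3$ with a representative of the 3-form component of $\Ch_{\odd}(\ind(A))$. The main obstacle I anticipate is careful sign bookkeeping: the second-factor inversion built into $S=\tilde m$ reverses the sign of $\eta_\cA$ from that factor (\cf Lemma~\ref{efatopi.172} and the passage \eqref{efatopi.151}--\eqref{efatopi.154} in the universal case), and one must verify that, after this sign flip and restriction to the fibre diagonal which kills any $\delta'_{\even}$-style remainder, the 1-form $\gamma$ produced by Proposition~\ref{efatopi.176} matches the one needed for the primitive condition on the pulled-back $\cL$ rather than a cohomologous but distinct representative.
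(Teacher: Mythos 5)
Your proposal is correct and follows essentially the same route as the paper, whose proof of Theorem~\ref{efatopi.60} is precisely the one-line instruction to run the argument of Theorem~\ref{efatopi.139} in parallel using \eqref{efatopi.178}, Propositions~\ref{efatopi.72} and~\ref{efatopi.182}, and the 3-form part of \eqref{efatopi.113} --- exactly the ingredients you assemble. The sign subtlety you flag (inversion in the second factor of $S=\tilde m$, and whether $\gamma$ enters with a plus or minus to cancel the exact term of \eqref{efatopi.178}) is real but is already present in the paper's own conventions, so your treatment is as complete as the original.
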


\begin{proof} This result follows by an argument parallel to the preceding
  one, given \eqref{efatopi.178}, Proposition~\ref{efatopi.72},
  Proposition~\ref{efatopi.182} and the 3-form part of \eqref{efatopi.113}.
\end{proof}

This result can be seen as a pseudodifferential generalization of a 
result of Lott (Theorem 1 in \cite{Lott2003}).  See also 
\cite{Bunke2002} for a different treatment of the index gerbe and
\cite{Carey-Wang2006} for a generalization of (\cite{Lott2003}, Theorem 1)
to families of Dirac operators on odd dimensional manifolds with boudary.

\section{Relation with the Bismut-Cheeger eta form} \label{bs.0}

Amongst the most important geometric examples of self-adjoint elliptic
operators are the Dirac-type operators on odd dimensional
manifolds. Suppose now that the fibres of the fibration \eqref{efatopi.1}
are odd dimensional.  Let $g\in\CI(M; \odot^{2} T^{*}(M/Y))$ be a family of
fibrewise metrics and let $\cli(T(M/Y))$ be the associated bundle of Clifford
algebras for the vertical tangent bundle $T(M/Y).$ Let $\bbE\to M$ be a
Clifford module with respect to $\cli(T(M/Y))$ with Clifford action $c:
\cli(T(M/Y))\to \End(\bbE).$ Let also $\nabla^{\bbE}$ be a family of
fibrewise Clifford connections, that is a family of unitary connections such that
\[
[\nabla^{\bbE}_{X_1}, c(X_{2})]=
c(\nabla^{LC}_{X_{1}}X_{2})\in \CI(M;\End(\bbE)),\ \forall\ X_{1},X_{2}\in \CI(M;T(M/Y)),
\]
where $\nabla^{LC}$ is the fibrewise Levi-Civita connection associated to
the family of metrics $g.$ This data allows us to define a family of
Dirac-type operators by
\begin{equation}
      \eth= c\circ \nabla^{\bbE}.
\label{bs.1}\end{equation}
For invertible families of this type, Bismut and Cheeger introduced in
\cite{Bismut-Cheeger} an eta form on the base. Their construction was
subsequently generalized in \cite{MR99a:58144} to situations where the
family is not invertible, but admits a perturbation by a family $Q\in
\Psi^{-\infty}(M/Y;\bbE)$ of self-ajoint smoothing operators such that
$\eth+Q$ is invertible. The odd families index of the family $\eth$ is
precisely the obstruction to the existence of such a family of
perturbations; for the boundary operators of a family of Dirac-type
operators on a fibration of manifolds with boundary, this index obstruction
vanishes by the cobordism invariance of the index, so that invertible
perturbations exist in this case.

When the family $\eth$ is invertible, the Bismut-Cheeger eta form is given by
\begin{equation}
\eta_{\BC}(\eth) =
\frac{1}{\sqrt{\pi}} \int_{0}^{\infty} \STr_{\cli(1)} \left( \frac{d\bbB_{t}}{dt}
      e^{-\bbB^{2}_{t}}\right) dt \; \in \Omega^{\even}(Y),
\label{bs.2}\end{equation}
where $\bbB_{t}$ is the rescaled Bismut superconnection associated to
$\eth$ (see (10.8) and (13.7) in \cite{MR99a:58144} for a detailed
discussion). For a family perturbed to be invertible, $\eth+Q,$ the
definition is slightly modified to
\begin{equation}
\eta_{\BC}(\eth+Q) =
\frac{1}{\sqrt{\pi}} \int_{0}^{\infty}
\STr_{\cli(1)} \left( \frac{d\widetilde{\bbB}_{t}}{dt}
      e^{-\widetilde{\bbB}^{2}_{t}}\right) dt \; \in \Omega^{\even}(Y),
\label{bs.2b}\end{equation}
where $\widetilde{\bbB}_{t}= \bbB_{t}+ t^{\frac{1}{2}}\chi(t) Q\sigma$,
with $\sigma\in\cli(1)$ a generator of $\cli(1)$ such that $\sigma^{2}=1$
and $\chi\in \CI(\bbR)$ is a non-negative function with $\chi(t)=0$ for
$t<1$ and $\chi(t)=1$ for $t>2$.  The Bismut-Cheeger eta form satisfies the following
transgression formula.

\begin{proposition}
The exterior differential of $\eta_{\BC}(\eth+Q)$ does not depend on the
choice of perturbation $Q$ and is given by
\[
 d \eta_{\BC}(\eth+Q)=(2\pi i)^{-\frac{n+1}{2}}
\int_{M/Y} \hat{A}(R_{g})\Ch'(\bbE),
\]
where $n$ is the dimension of the fibres of the fibration $\phi:M\to Y$ and
$\Ch'(\bbE)$ is the twisting Chern character of $\bbE.$   
\label{bs.3}\end{proposition}

\begin{proof}
Because of the cut-off function $\chi$ we have that 
\begin{equation}
\begin{aligned}
     \lim_{t\to 0} \frac{1}{\sqrt{\pi}} \STr_{\cli(1)}\left( e^{-\widetilde{\bbB}^{2}_{t}}\right) & =
     \lim_{t\to 0} \frac{1}{\sqrt{\pi}} \STr_{\cli(1)}\left( e^{-\bbB^{2}_{t}}\right) \\
     &=
      \frac{1}{(2\pi i)^{\frac{n+1}{2}}} \int_{M/Y} \hat{A}(R_{g}) \Ch'(\bbE).
\end{aligned}      
\label{bs.4}\end{equation}
On the other hand, because the family $\eth+Q$ is invertible,
\begin{equation}
  \lim_{t\to \infty} \frac{1}{\sqrt{\pi}} \STr_{\cli(1)}\left( e^{-\widetilde{\bbB}^{2}_{t}}\right)  =0
\label{bs.5}\end{equation}
exponentially fast.  
Thus, the result follows by combining \eqref{bs.4} and \eqref{bs.5} with 
\[
    \frac{d}{dt} \STr_{\cli(1)}\left( e^{-\widetilde{\bbB}^{2}_{t}}\right)=
    -d_{Y}  \STr_{\cli(1)} \left( \frac{d\widetilde{\bbB}_{t}}{dt}
      e^{-\widetilde{\bbB}^{2}_{t}}\right)    \]
and integrating in $t$.      
\end{proof}

If we consider the rescaled version of the Bismut-Cheeger eta form,
\begin{equation}
\widehat{\eta}_{\BC}(\eth+Q) =
\sum_{j=0}^{\infty}(2\pi i)^{-j}\eta_{\BC, [2j]}(\eth+Q),
\label{bs.10}\end{equation}
where $\eta_{\BC, [2j]}$ is the part of $\eta_{\BC}$ of degree $2j,$ then
Proposition~\ref{bs.3} shows that the Chern character of the family index
is trivial in cohomology, which is consistent with the fact the odd index
of $\eth$ must vanish for the invertible perturbation $\eth+Q$ to exist.

It also follows from Proposition~\ref{bs.3} that if $Q_{1}$ and $Q_{2}$ are
two perturbations giving invertible families, then
$\widehat{\eta}_{\BC}(\eth+Q_{1})-\widehat{\eta}_{\BC}(\eth+Q_{2})$ is a
closed form. Moreover, using Proposition~\ref{bs.3} again, it can be seen
that the cohomolgy class represented by the form
$\widehat{\eta}_{\BC}(\eth+Q_{1})-\widehat{\eta}_{\BC}(\eth+Q_{2})$ only
depends on the homotopy classes of $Q_{1}$ and $Q_{2}$ in the space of such
perturbations. This cohomology class can be identified usign the notion of
spectral sections introduced in \cite{MR99a:58144}.

\begin{definition}
A spectral section for the family of self-adjoint Dirac-type operators $\eth$ of
\eqref{bs.1} is a family of self-adjoint projections $P\in
\Psi^{0}(M/Y;\bbE)$ such that for some smooth function $R:Y\to \bbR^{+}$
(depending on $P$) and every $y\in Y,$
\[
     \eth_{y}u= \lambda u\Longrightarrow \begin{cases}
      P_{y}u=u, & \mbox{if}\; \lambda>R(y), \\
      P_{y}u=0, & \mbox{if} \; \lambda<-R(y).
\end{cases}
\] 
\label{bc.1}\end{definition}

If $P_{1}$ and $P_{2}$ are spectral sections for the family $\eth$, then as
shown in \cite{MR99a:58144}, their formal difference defines a $K$-class
$[P_{1}-P_{2}]\in K^{0}(Y).$ If $P_{1}P_{2}=P_{2},$ then $[P_{1}-P_{2}]$
is represented by the vector bundle given by the range of the family of the
finite rank projections $(\Id-P_{2})P_{1}.$  In general, one can reduce to
this case by choosing a third spectral section $R$ such that $P_{1}R=R,$
$P_{2}R=R$ and setting
\begin{equation}
[P_{1}-P_{2}] = [P_{1}-R]- [P_{2}-R]\in K^{0}(Y).
\label{bc.2}\end{equation}  
It is shown in \cite{MR99a:58144} that such a spectral section $R$
always exists and that the definition of the $K$-class $[P_{1}-P_{2}]$ does
not depend on the choice of $R.$
To obtain a spectral section from an invertible self-adjoint perturbation
$Q$, we need an extra assumption.

\begin{definition} A family $Q\in \Psi^{-\infty}(M/Y;\bbE)$ of self-adjoint
  operators is spectrally finite with respect to the family of Dirac-type
  operators $\eth$ if there exists a smooth function $R:Y\to \bbR^{+}$ such
  that for every $y\in Y$,
\[
\eth_{y}u= \lambda u \Longrightarrow Q_{y}u=0  \mbox{ if }|\lambda|>R(y).
\]
\label{bc.2b}\end{definition}
If $Q\in \Psi^{-\infty}(M/Y;\bbE)$ is an invertible self-adjoint
perturbation of the family $\eth$ which is not spectrally finite, then
using an approximation argument, it is shown in \cite{MR99a:58144} that
it is possible to deform it through invertible self-adjoint perturbations
to one which is spectrally finite.

Suppose now that $Q\in \Psi^{-\infty}(M/Y;\bbE)$ is a spectrally finite
invertible self-adjoint perturbation of the family $\eth.$ Then there is a
corresponding spectral section $P_{Q}\in \Psi^{0}(M/Y;\bbE)$ with
$(P_{Q})_{y}$ defined to be the projection onto the positive eigenspace of
$\eth_{y}+Q_{y}.$ The following relative index theorem was proved in
\cite{MR99a:58144}.

\begin{proposition}\label{bc.3}
If $Q_{1}$ and $Q_{2}$ are two spectrally finite
  invertible self-ajoint perturbations of the family $\eth,$ then the Chern
  character $\Ch([P_{Q_{1}}-P_{Q_{2}}])$ of the $K$-class
  $[P_{Q_{1}}-P_{Q_{2}}]\in K^{0}(Y)$ is represented by the closed form 
\[
\widehat{\eta}_{\BC}(\eth+Q_{1})- \widehat{\eta}_{\BC}(\eth+Q_{2}).
\]
\end{proposition}

If the odd index of the family $\eth$ does not vanish, it is still possible
to define a version of the Bismut-Cheeger eta form, but now over
an infinite dimensional bundle $\varpi:\cA^{\SA}\longrightarrow Y$ defined
in terms of self-adjoint smoothing perturbations. Namely the fibre at
$y\in Y$ is
\begin{equation}
\cA^{\SA}_{y}= \{\eth_{y}+Q_{y}\; ; \; Q_{y}\in \Psi^{-\infty}(Z_{y};\bbE),
\;Q_{y}^{*}=Q_{y}, \; \eth_{y}+Q_{y}\mbox{ is invertible} \}.
\label{bs.6}\end{equation}
The pull-back $\varpi^{*}\cA^{\SA}$ of $\cA^{\SA}$ to itself has a
tautological section $\sigma_{\cA^{\SA}}: \cA^{\SA}\longrightarrow
\varpi^{*}\cA^{\SA}$ which can be used to define a form on the total space
of $\cA^{\SA}$ via formula \eqref{bs.2b},
\begin{equation}
{\eta}_{\BC} \in \Omega^{\even}(\cA^{\SA}).
\label{bs.7}\end{equation}
It is well-defined since the space $\cA^{\SA}$ has a natural structure of smooth Fr\'echet manifold.  As in \eqref{bs.10}, we can also consider its rescaled version
$\widehat{\eta}_{\BC}\in \Omega^{\even}(\cA^{\SA})$.  
Proposition~\ref{bs.3} then extends as follows.

\begin{proposition} The exterior differential of the Bismut-Cheeger eta
form $\eta_{\BC}$ in \eqref{bs.7} is the basic form
\[
d\eta_{\BC}=
\varpi^{*}\left((2\pi i)^{-\frac{n+1}{2}}
\int_{M/Y} \hat{A}(R_{g}) \Ch'(\bbE)\right).
\]
\label{bs.8}\end{proposition}

Let $\rho\in \cS(\bbR)$ be a a choice of Schwartz function such that
$\rho(0)=1$.  Let also $\cA$ be the infinite dimensional bundle of
\eqref{efatopi.82} with $A=\eth$.  The function $\rho$ can be used to
define an injective bundle map $\iota: \cA^{\SA}\to \cA$ defined fibrewise
by
\begin{equation}
\iota_{y}: \cA^{\SA}_{y} \ni \eth_{y}+Q_{y} \longmapsto
\eth_{y}+it+ Q_{y}\rho(t) \in \cA_{y}.
\label{bs.9}\end{equation}
The definition of $\iota$ clearly depends on the choice of $\rho$, but
since the space of Schwartz functions equal to one at the origin is convex,
the homotopy class of the map $\iota$ does not depend on the choice of
$\rho.$

\begin{proposition}\label{bc.4} For each $y\in Y,$ the map $\iota_{y}:
  \cA^{\SA}_{y}\longrightarrow \cA_{y}$ is a weak homotopy equivalence; in
  particular, $\cA^{\SA}_{y}$ is a classifying space for even $K$-theory
  and $\iota: \cA^{\SA}\longrightarrow \cA$ is also an homotopy equivalence.
\end{proposition}

\begin{proof} Let $W\in \Psi^{-\infty}(Z_{y};\bbE_{y})$ be a fixed choice
  of spectrally finite invertible self-adjoint perturbation of $\eth_{y}.$
  Let $B$ be a smooth closed manifold. Given a smooth map $f:B\longrightarrow
  \cA^{\SA}_{y}$ with $f(b)= \eth_{y}+Q_{b},$ it is always possible to
  deform it through self-adjoint invertible perturbations so that the family
  $b\longmapsto Q_{b}\in\Psi^{-\infty}(Z_{y};\bbE_{y})$ becomes spectrally
  finite with respect to $\eth_{y}.$ This means there is a well-defined
  map
\begin{equation}
\mu: [B;\cA^{\SA}_{y}]\ni [f] \longmapsto [P_{f}-P_{W}]\in K^{0}(B) 
\label{bc.4b}\end{equation}
where $P_{f}$ is the spectral section associated to the spectrally finite
invertible self-adjoint perturbation $b\mapsto Q_{b}$ and $P_{W}$ is the
spectral section associated to $W$ seen as a spectrally finite invertible
perturbation for the trivial family $b\mapsto \eth_{y}$ over $B$.  The map
$\mu$ is easily seen to be bijective, so that $\cA^{\SA}_{y}$ is a
classifying space for even $K$-theory.  On the other hand, under the
identification
\begin{equation}
   \cA_{y} \cong G^{-\infty}_{\sus}(Z_{y};\bbE_{y}),
\label{bc.5}\end{equation}
given by composing on the right with $(\iota_{y}(\eth_{y}+W))^{-1}$, we see
the fibre $\cA_{y}$ is also a classifying space for even $K$-theory.  In
fact, this identification induces a map
\[
   \nu: [B;\cA_{y}]\longrightarrow [B;G^{-\infty}(Z_{y};\bbE_{y})]\cong K^{-2}(B)
\]
and a corresponding commutative diagram
\begin{equation}
\xymatrix{  [B;\cA^{\SA}_{y}]
\ar[r]^{\iota_{*}} \ar[d]^{\mu} & [B;\cA_{y}] \ar[d]^{\nu} \\
K^{0}(B) \ar[r]^{p} & K^{-2}(B)
}
\label{bc.6}\end{equation}
where the bottom map $p$ is Bott periodicity. In particular, this
shows the map $\iota:\cA^{\SA}_{y}\longrightarrow \cA_{y}$ is a weak homotopy
equivalence. 
\end{proof}
The proof of the previous proposition also gives the following result.
\begin{corollary}
Suppose that $\sigma_{1}: Y\to \cA^{\SA}$ and $\sigma_{2}: Y\to \cA^{\SA}$ are two sections of the bundle $\cA^{\SA}$. Then the closed forms
\[
    (\sigma_{1}^{*}\widehat{\eta}_{\BC}-\sigma^{*}_{2}\widehat{\eta}_{\BC}) \quad \mbox{and} \quad  ((\iota\circ\sigma_{1})^{*}\eta_{\cA} - (\iota\circ \sigma_{2})\eta_{\cA} )
\] 
represent the same cohomology class in $H^{\even}_{\dR}(Y)$.
\label{bc.6b}\end{corollary}
\begin{proof}
Using section $\sigma_{1}$ instead $W$ to define a map $\mu$ as in \eqref{bc.4b}, we get a commutative diagram  analogous to \eqref{bc.6}, namely, 
\begin{equation}
\xymatrix{  [Y;\cA^{\SA}]
\ar[r]^{\iota_{*}} \ar[d]^{\mu} & [Y;\cA] \ar[d]^{\nu} \\
K^{0}(B) \ar[r]^{p} & K^{-2}(B).
}
\label{bc.6c}\end{equation}
Thanks to Proposition~\ref{bc.3}, we then see that the forms $(\sigma_{1}^{*}\widehat{\eta}_{\BC}-\sigma^{*}_{2}\widehat{\eta}_{\BC})$ and $((\iota\circ\sigma_{1})^{*}\eta_{\cA} - (\iota\circ \sigma_{2})\eta_{\cA} )$ represent the Chern character of the same $K$-class, from which the result follows.
\end{proof}

Since Proposition~\ref{bc.4} shows that the bundles $\cA^{\SA}$ and $\cA$ are homotopy equivalent, the map $\iota$ allows $\eta_{\cA}$ to be compared
with the Bismut-Cheeger eta form $\widehat{\eta}_{BC}.$  The definitions of
$\eta_{\cA}$ and $\widehat{\eta}_{\BC}$ involve different regularizations
of the underlying Dirac family. Assuming choices of regularization and choices of
connections affect the eta form in a similar way, it is natural  in light of
Lemma~\ref{coc.1} to expect the following relation between the two types of eta forms.

\begin{conjecture} There exist forms $\beta\in \Omega^{\even}(Y)$ and $\alpha\in
\Omega^{\odd}(\cA^{\SA})$ such that
\[
\iota^{*}\eta_{\cA}- \widehat{\eta}_{\BC}= \varpi^{*}\beta+ d\alpha.
\]
\label{bs.11}\end{conjecture}
As an indication that this conjecture might be true, we will prove it in a
particular case.

\begin{theorem} The conjecture is true when the odd families index of the
  family $\eth\in \Psi^{1}(M/Y;\bbE)$ vanishes.
\label{bc.7}\end{theorem}
\begin{proof}

Since $\ind(\eth)=0$ in $K^{1}(Y)$, we know $\eth$ admits an invertible
self-adjoint perturbation $\eth+ Q_{0}$ with $Q_{0}\in
\Psi^{-\infty}(M/Y;\bbE).$ Without loss of generality, we can assume
$Q_{0}$ is spectrally finite. The perturbation $Q_{0}$ defines a section
$\sigma:Y\longrightarrow \cA^{\SA}$ with $\sigma(y)= \eth_{y}+(Q_{0})_{y}.$
There is also an induced section $\iota\circ\sigma: Y\longrightarrow \cA$ for the
bundle $\cA\longrightarrow Y.$  The form on the base is then taken to be
\[
\beta= \sigma^{*}\widehat{\eta}_{\BC}- (\iota\circ \sigma)^{*}\eta_{\cA}.
\]

The form $\omega= \widehat{\eta}_{\BC}- \iota^{*}\eta_{\cA}-
\pi^{*}_{\cA^{\SA}}\beta$ can then be written as the difference of two
closed forms, $\omega= \omega_{\BC}-\omega_{\cA}$ with
\[
\omega_{\BC}=
\widehat{\eta}_{\BC}- \pi^{*}_{\cA^{\SA}}\sigma^{*}\widehat{\eta}_{\BC},\
\omega_{\cA}= \iota^{*}\eta_{\cA}- \pi^{*}_{\cA^{\SA}}(\iota\circ\sigma)^{*}\eta_{\cA}.
\]
Let $B$ be a closed manifold and $f: B\longrightarrow \cA^{\SA}$ a smooth map.
By perturbing $f$ in its homotopy class as necessary, it can be arranged that $f$
induces a spectrally finite invertible self-adjoint pertrubation of the
family $f^{*}\eth$ parametrized by $B.$ By Corollary~\ref{bc.6b}, both $f^{*}\omega_{\BC}$ and
$f^{*}\omega_{\cA}$ represents the Chern character of $[P_{f}-
  ((\pi_{\cA^{\SA}}\circ f)^{*}P_{Q}],$ where $P_{f}$ is the spectral
section associated to the invertible family over $B$ defined by the map $f$
and $(\pi_{\cA^{\SA}}\circ f)^{*}P_{Q}$ is the spectral section associated
to the invertible family obtained by pulling back the invertible family
$\eth+Q$ under the map $\pi_{\cA^{\SA}}\circ f: B\longrightarrow Y.$

Since $B$ and $f$ are arbitrary, this means $\omega$ is trivial in the
singular cohomology of $\cA^{\SA}.$  From the infinite dimensional version of
the de Rham theorem in this context (see Lemma~\ref{dR.1} below), it
follows that there exists $\alpha\in \Omega^{\odd}(\cA^{\SA})$ such that
\[
          d\alpha= \omega,
\]
from which the result follows.  
\end{proof}

\begin{lemma}[de Rham theorem]  The de Rham theorem holds for the infinite dimensional space $\cA^{\SA}$.
\label{dR.1}\end{lemma}
\begin{proof}
According to Theorem 16.10 and 34.7 in \cite{Kriegl-Michor} or the discussion on p.25 of \cite{Pressley-Segal1}, it suffices to show that $\cA^{\SA}$ satisfies the following two properties:
\begin{itemize}
\item[(i)] the Fr\'echet space $\cF$ on which $\cA^{\SA}$ is locally modelled has enough smooth functions, which means that for each open set $\cU$ in $\cF$, there is a nonvanishing real-valued smooth function which vanishes outside $\cU$;
\item[(ii)] The manifold $\cA^{\SA}$ is Lindel\"of, which means each open covering has a countable refinement.   
\end{itemize}
To show property (i), fix $y\in Y$ and consider the Fr\'echet space $\Psi^{-\infty}(Z_{y};\bbE_{y})$.  Then the closed subspace 
\[
   \Psi^{-\infty}_{\SA}(Z_{y};\bbE_{y})=\{ Q\in \Psi^{-\infty}(Z_{y};\bbE_{y})\; ; \;Q^{*}=Q\} 
\]
is also naturally a Fr\'echet space.  If $k=\dim Y$, then our local model for $\cA^{\SA}$ can be taken to be the Fr\'echet space
\[
     \cF= \Psi^{-\infty}_{\SA}(Z_{y};\bbE_{y})\times \bbR^{k}.
\]
Since $\Psi^{-\infty}(Z_{y};\bbE_{y})$ is a nuclear Fr\'echet space, so is $\cF$ (see Corollary~21.6.4 and Corollary~21.2.3 in \cite{Jarchow}).  Thus, by Proposition~14.4 in \cite{Kriegl-Michor}, $\cF$ has enough smooth functions.  

To prove property (ii), notice that the Fr\'echet space $\Psi^{-\infty}(Z_{y};E_{y})$ is separable, so it is in particular second-countable.  This implies $\Psi^{-\infty}_{\SA}(Z_{y};\bbE_{y})$ and $\cA^{\SA}_{y}$ are second-countable, and more generally, that $\cA^{\SA}$ is second-countable, which means in particular it is Lindel\"of.

\end{proof}

%\bibliography{all,My}
%\bibliography{efatopi}
%\bibliographystyle{amsplain}

\providecommand{\bysame}{\leavevmode\hbox to3em{\hrulefill}\thinspace}
\providecommand{\MR}{\relax\ifhmode\unskip\space\fi MR }
% \MRhref is called by the amsart/book/proc definition of \MR.
\providecommand{\MRhref}[2]{%
  \href{http://www.ams.org/mathscinet-getitem?mr=#1}{#2}
}
\providecommand{\href}[2]{#2}

\end{document}